\documentclass[10pt]{article}

\usepackage{iftex}
\ifXeTeX
  
\fi

\usepackage[utf8]{inputenc}
\usepackage{CJKutf8}

\usepackage{amsmath}                   
\usepackage{amssymb}                    
\usepackage{amsfonts}                   
\usepackage{amsthm}

\usepackage{amscd}                      
\usepackage{mathrsfs}                  
\usepackage{bm}                         
\usepackage{latexsym}            

\usepackage{algorithm}      
\usepackage{algpseudocode}
\usepackage{cases}               

\algnewcommand\algorithmicinput{\textbf{Input:}}
\algnewcommand\algorithmicoutput{\textbf{Output:}}
\algnewcommand\Input{\item[\algorithmicinput]}
\algnewcommand\Output{\item[\algorithmicoutput]}
  
\usepackage{indentfirst}                
\usepackage{lineno}                                  

\usepackage{booktabs}                   
\usepackage{multirow}                   
\usepackage[flushleft]{threeparttable}  
\usepackage{enumitem}

\usepackage{graphicx}                   
\usepackage{float}                     
\usepackage{subcaption}
\usepackage{grffile}            
\usepackage{adjustbox}
\usepackage{tikz}
\usetikzlibrary{positioning, arrows.meta, calc, shapes.geometric}

\usepackage[multiple]{footmisc}

\usepackage{cite}
\usepackage[colorlinks, citecolor=blue, urlcolor=blue]{hyperref}

\usepackage{authblk}

\usepackage{geometry}                  
\usepackage{fancyhdr}
\fancypagestyle{plain}{                 
    \fancyhf{}                          
    \fancyfoot[C]{\thepage}

}
\newtheorem{theorem}{Theorem}
\newtheorem{remark}{Remark}
\newtheorem{lemma}{Lemma}

\newtheorem{corollary}{Corollary}
\newtheorem{assumption}{Assumption}

\numberwithin{equation}{section}   
\numberwithin{theorem}{section}    
\numberwithin{definition}{section} 
\numberwithin{lemma}{section}      
\numberwithin{remark}{section}     
\numberwithin{example}{section}    
\numberwithin{corollary}{section}
\numberwithin{assumption}{section}
\numberwithin{proposition}{section}

\allowdisplaybreaks[4]
\date{}

\usepackage{etoolbox}
\AtBeginEnvironment{thebibliography}{\interlinepenalty=10000}

\makeatletter
\newenvironment{lastnumbercases}[1][\textstyle]{\def\numc@expstyle{$}\let\numc@dispstyle#1\numcases{}}{\endnumcases}
\makeatother

\newcommand{\refcite}{\cite}
\newcommand{\keywords}[1]{\vspace{0.2cm}\noindent{\bf Keywords:} {#1}\par}
\newcommand{\ccode}[1]{\smallskip\noindent{#1}\par}
\newenvironment{romanlist}[1][(iii)]
  {\begin{enumerate}[label=(\roman*),ref=\roman*]}
  {\end{enumerate}}

\begin{document}

\title{Deep Truncated FBSDE Method: A Robust Solver for\\ High-Dimensional Nonlinear PDEs and Fully Coupled FBSDEs} 

\author{
Xinyu Cheng\thanks{Research Institute of Intelligent Complex Systems, Fudan University, Shanghai 200433, China. xycheng@fudan.edu.cn.}
\quad
Yunzhang Li\thanks{Research Institute of Intelligent Complex Systems, Fudan University, Shanghai 200433, China; Department of Mathematics, Friedrich-Alexander-Universit\"at Erlangen--N\"urnberg, 91058 Erlangen, Germany. li\_yunzhang@fudan.edu.cn.}
\quad
Weiran Xiong\thanks{Corresponding author. Research Institute of Intelligent Complex Systems, Fudan University, Shanghai 200433, China. wrxiong26@m.fudan.edu.cn.}
}

\maketitle

\begin{abstract}
In this paper, we introduce a deep truncated forward--backward stochastic differential equation (FBSDE) method for high-dimensional partial differential equations (PDEs). 
Compared with existing deep-learning solvers for fully coupled FBSDEs, where strong coupling may lead to numerical instability, our approach exhibits improved stability.
The proposed method combines gradient-truncated iterative decoupling with fictitious-play averaging to separate the forward and backward processes in a coupled framework. 
This preserves the coupled dynamics while reducing the unstable feedback induced by parameter-dependent forward paths during optimization.
Furthermore, we incorporate a pathwise consistency term to create explicit local gradient shortcuts, thereby providing a structural mechanism that may mitigate gradient vanishing.
We also derive a residual-based error estimate and establish conditional convergence of the fully discrete numerical approximations under suitable conditions, in which the pathwise consistency loss is not required.
Our approach is particularly effective for convection-dominated equations, where the coupled formulation provides a stable representation of nonlinear transport without introducing singular terms into the BSDE. 
Numerical experiments demonstrate improved accuracy and stability in both low- and high-dimensional problems and robust performance for strongly coupled problems.
\end{abstract}

\keywords{High-dimensional PDEs; forward--backward stochastic differential equations; deep learning; gradient truncation; fictitious-play averaging; pathwise consistency.}
\ccode{\textbf{AMS Subject Classification:} 65M75, 65C30, 60H35, 60H10, 68T07}

\section{Introduction}

\subsection{Background}

Partial differential equations (PDEs) and forward--backward stochastic differential equations (FBSDEs) serve as fundamental tools for modeling complex dynamical systems. 
PDEs describe the evolution of deterministic systems over space and time, and arise in a broad range of areas including fluid dynamics, economics, physics, and biology. 
FBSDEs characterize stochastic dynamical systems and play a central role in stochastic optimal control, mean-field games, and mathematical finance. 
They are linked through the nonlinear Feynman--Kac formula. 
In particular, semilinear parabolic PDEs are associated with decoupled FBSDEs~\cite{MR1176785}, whereas quasilinear parabolic PDEs correspond to fully coupled FBSDEs~\cite{MR1701517}. 
This probabilistic connection provides the basis for developing stochastic numerical methods for high-dimensional PDEs.

However, classical numerical methods for both PDEs and FBSDEs become intractable in high dimension because of the curse of dimensionality. 
For PDEs, mesh-based methods such as finite difference, finite element, and spectral methods incur rapidly increasing computational costs as the dimension grows.
For FBSDEs, traditional discretization procedures, such as those based on the approximation of conditional expectations by basis functions, encounter analogous difficulties, especially in strongly coupled settings. These challenges have motivated the development of deep learning-based methods built on the probabilistic representation of parabolic PDEs by FBSDEs.

The deep backward stochastic differential equation (BSDE) method proposed by Han, Jentzen, and E~\cite{MR3847747} provides a seminal framework for high-dimensional semilinear parabolic PDEs. 
This method reformulates the PDE as a decoupled FBSDE and then employs deep composite neural networks to solve the FBSDE. 
Since then, a broad range of subsequent works have been developed along the BSDE-based and related stochastic Feynman--Kac approaches for high-dimensional PDEs and FBSDEs; see, for example, Refs.~\refcite{MR4081911,MR4122227,raissi2024forward,8982030,MR4551869,MR5028798,MR4932651,wang2018deeplearningbasedbsdesolver,MR3993178,MR4778774,MR4945146,MR4917761,wang2025deepforwardbackwarddynamicprogramming,MR4954338} and the references therein.

However, the effectiveness of these methods still depends strongly on the structure of the underlying system. 
A major difficulty arises for fully coupled FBSDEs, where the forward coefficients depend on the backward variables.
In this case, the simulated forward trajectories depend on the trainable parameters. 
As the parameters are updated, the distribution of the forward paths changes accordingly, so that the optimization is performed on a moving and often ill-conditioned landscape. 
This creates an unstable feedback loop between the forward simulation and the backward approximation, which may lead to poor convergence or numerical instability, especially for strongly coupled problems and long time horizons. 
To address these issues, we propose a new deep learning-based method for high-dimensional fully coupled FBSDEs and the corresponding quasilinear parabolic PDEs.

\subsection{Main results and contributions}\label{subsec:main_results}

In this article, we propose a new deep learning framework, termed the deep truncated FBSDE method, for high-dimensional quasilinear parabolic PDEs and the corresponding fully coupled FBSDEs.
The method is particularly suited to strongly coupled systems and degenerate convection-dominated equations, where the coupled formulation provides a stable representation of nonlinear transport without introducing singular terms into the BSDE.
To describe the problem setting and the main contributions, we consider the following quasilinear parabolic PDE
\begin{lastnumbercases}[\displaystyle]\label{eq:quasi_para_PDE}
    \partial_t u + \tfrac{1}{2}\operatorname{Tr}\!\big(\sigma \sigma^\top(\cdot,\cdot,u)\, D_x^2 u\big)
    + D_x u\, b(\cdot,\cdot,u,D_x u\,\sigma)
    + f(\cdot,\cdot,u,D_x u\,\sigma)=0
    \qquad \text{on } [0,T)\times\mathbb{R}^d,\nonumber\\
    u(T,\cdot)=g
    \qquad \text{on } \mathbb{R}^d,
\end{lastnumbercases}
where \(u:[0,T]\times\mathbb{R}^d\to\mathbb{R}^m\), and 
\(
b:[0,T]\times\mathbb{R}^d\times\mathbb{R}^m\times\mathbb{R}^{m\times d}\to\mathbb{R}^d,
\sigma:[0,T]\times\mathbb{R}^d\times\mathbb{R}^m\to\mathbb{R}^{d\times d},
f:[0,T]\times\mathbb{R}^d\times\mathbb{R}^m\times\mathbb{R}^{m\times d}\to\mathbb{R}^m,
g:\mathbb{R}^d\to\mathbb{R}^m
\)
are given measurable functions. 
The second-order term is understood componentwise, namely,
\(\bigl[\operatorname{Tr}(\sigma\sigma^\top D_x^2 u)\bigr]_j
:= \operatorname{Tr}\big(\sigma\sigma^\top D_x^2 u^j\big)\) for \(j=1,\dots,m\),
where \(D_x^2 u^j\) denotes the Hessian matrix of the \(j\)-th component \(u^j\). Associated with the PDE~\eqref{eq:quasi_para_PDE} is the following \textit{fully coupled} FBSDE
\begin{lastnumbercases}[\displaystyle]\label{eq:fully_coupled_FBSDE}
    X_t = x_0 + \int_0^t b(s,X_s,Y_s,Z_s)\,\mathrm{d}s
          + \int_0^t \sigma(s,X_s,Y_s)\,\mathrm{d}W_s,\nonumber\\
    Y_t = g(X_T) + \int_t^T f(s,X_s,Y_s,Z_s)\,\mathrm{d}s
          - \int_t^T Z_s\,\mathrm{d}W_s.
\end{lastnumbercases}
Assume that the PDE~\eqref{eq:quasi_para_PDE} admits a unique classical solution
\(u\in C^{1,2}([0,T]\times\mathbb{R}^d;\mathbb{R}^m)\), and that the FBSDE~\eqref{eq:fully_coupled_FBSDE}
admits a unique adapted solution \((X,Y,Z) \). 
Then, by the nonlinear Feynman--Kac formula~\cite{MR1701517}, the solution \((X,Y,Z)\) of the FBSDE~\eqref{eq:fully_coupled_FBSDE} satisfies
\begin{equation}\label{eq:nonlinear_feynman_kac}
    Y_t = u(t,X_t),
    \qquad
    Z_t = D_x u(t,X_t)\,\sigma\big(t,X_t,u(t,X_t)\big),
    \qquad \forall \,t\in[0,T].
\end{equation}

In this setting, the central contribution of this work is a gradient-truncated algorithmic framework that preserves the forward--backward coupling in the simulated dynamics while removing the unstable feedback induced by parameter-dependent forward trajectories during optimization.
The detailed construction of the method is given in Section~\ref{sec:deep_truncated_fbsde}.
The main components of the method are summarized as follows.
\begin{romanlist}[(iii)]
    \item \textbf{Gradient-truncated decoupling with fictitious-play averaging.}
    We introduce an iterative decoupling strategy to separate the forward and backward parts of the fully coupled FBSDE~\eqref{eq:fully_coupled_FBSDE}.
    In each iteration, a stop-gradient operator is introduced in the forward stochastic differential equation (SDE), so that \(Y\) and \(Z\) enter through their numerical values while their influence on the optimization through the simulated forward path is removed by gradient truncation.
    The BSDE is then solved along a reference path, which is initialized separately and subsequently updated through fictitious-play averaging~\cite{MR4250284}.

    \item \textbf{Nonlinear Feynman--Kac reconstruction along the reference path.}
    We use a neural reference process \(\widehat Y\) and reconstruct \(Z\) from its spatial derivative through the nonlinear Feynman--Kac formula~\eqref{eq:nonlinear_feynman_kac} by automatic differentiation.
    The reconstruction is performed at every time level along the reference path, preserving the derivative-based relation between the approximations of \(\widehat Y\) and \(Z\).
    Moreover, no additional normalization layers are introduced, avoiding dependence on batch statistics and the associated computational cost.

    \item \textbf{Pathwise consistent optimization over long horizons.}
    We incorporate a pathwise consistency term into the training objective, which combines terminal matching with time-averaged consistency between the simulated and neural reference processes.
    In particular, the backward dynamics are evaluated along the reference path, whereas the terminal target is evaluated along the current forward trajectory.
    This provides intermediate-time constraints over the whole trajectory, complementing the terminal matching condition.
\end{romanlist}

The above ingredients define the proposed deep truncated FBSDE framework.
We further establish theoretical properties and provide numerical validation of the proposed framework as follows.

\begin{romanlist}[(iii)]

    \item \textbf{Gradient propagation and gradient shortcuts.}
    We characterize the gradient propagation induced by terminal-value training and the local gradient contributions generated by the pathwise consistency term.
    Theorem~\ref{thm:gradient_mitigation} shows that the latter introduces explicit contributions at neighboring time levels that do not pass through the full temporal propagation chain.
    This provides a structural mechanism that may mitigate gradient vanishing over long time horizons.

    \item \textbf{Residual-based error estimate and conditional convergence.}
    We further derive an error estimate and establish conditional convergence.
    Theorem~\ref{thm:error_estimate} provides a residual-based error estimate for the fully discrete approximation in terms of the time discretization, terminal loss, terminal reference mismatch, reference-path discrepancy, and decoupling error.
    Under the additional assumptions stated in Section~\ref{sec:error_analysis}, including weak coupling and residual consistency, Theorem~\ref{thm:error_convergence} establishes convergence to the solution of the original coupled FBSDE as the time mesh is refined and the number of decoupling steps increases.
    The pathwise consistency term is not required for either result.

    \item \textbf{Numerical validation with improved accuracy and stability.}
    Finally, we present numerical experiments in both low and high dimensions and compare the proposed method with the deep BSDE method and several related approaches.
    The experiments show that the proposed method achieves improved accuracy and stability across the numerical comparisons, while maintaining competitive computational cost, and the ablation results support the stabilizing role of gradient truncation.
    In particular, for degenerate convection-dominated equations, the coupled FBSDE formulation avoids the singular generator terms arising in the decoupled formulation and enables stable approximation of nonlinear transport.

\end{romanlist}

These results together support the deep truncated FBSDE method as a stable framework for solving high-dimensional quasilinear PDEs through coupled FBSDE formulations.

\subsection{Related works}

Following the breakthrough of the deep BSDE method~\cite{MR3736669, MR3847747}, an extensive line of research has developed around deep learning algorithms based on stochastic representations for solving high-dimensional PDEs. 
One major direction remains within the classical BSDE framework and focuses on designing numerical schemes with improved stability and accuracy. 
Among these results, the backward deep BSDE method~\cite{wang2018deeplearningbasedbsdesolver} was designed for optimal stopping problems by solving BSDEs in a backward manner, with its convergence established in Ref.~\refcite{MR4673346}.
From the perspective of dynamic programming, the deep backward dynamic programming (DBDP) methods~\cite{MR4081911} decompose the global problem into local optimization problems solved sequentially by backward induction.
Since the forward trajectories are generated independently in advance and the sequence of local training problems is already computationally demanding, these schemes are primarily designed for decoupled FBSDEs and do not extend directly to the fully coupled setting; see also the extension to fully nonlinear PDEs in Ref.~\refcite{MR4322044}.
Alternatively, the forward--backward stochastic neural network (FBSNN)~\cite{raissi2024forward} approximates the global solution using a single neural network, although achieving adequate accuracy typically requires comparatively large networks and may introduce optimization challenges that restrict its computational efficiency. 
Zhang and Cai~\cite{MR4476007} further developed single-network FBSDE schemes by minimizing pathwise differences between two stochastic processes and handling the forward--backward coupling directly, without the iterative decoupling and reference-path construction used in our multi-network framework.
Another direction extends BSDE-based and related stochastic Feynman--Kac approaches beyond classical BSDEs to broader classes of equations and stochastic systems. 
Notable extensions include fully nonlinear parabolic PDEs through second-order BSDEs~\cite{MR3993178}, semilinear elliptic problems~\cite{MR4808368}, forward--backward doubly SDEs~\cite{MR4778774}, BSDEs driven by jump processes~\cite{MR4945146}, partial integro-differential equations and FBSDEs with jumps~\cite{MR4917761, wang2025deepforwardbackwarddynamicprogramming}, and PDEs with fractional Laplacian associated with fat-tailed L\'evy measures~\cite{MR4954338}.

Despite these significant advances for decoupled or weakly coupled FBSDEs, the extension of deep learning methods to fully coupled FBSDEs remains challenging. Such systems are associated with quasilinear PDEs, and the drift and diffusion coefficients of the forward process depend explicitly on the backward variables. 
Han and Long~\cite{MR4122227} extended the deep BSDE method to coupled FBSDEs whose drift and diffusion coefficients are both independent of $Z$, and established a posteriori error estimates. 
Jiang and Li~\cite{MR4399896} further extended the a posteriori error analysis to coupled FBSDEs with non-Lipschitz diffusion coefficients.
Andersson et al.~\cite{MR4551869} introduced a robust deep FBSDE method for stochastic control problems together with an error analysis, and subsequently proposed the deep multi-FBSDE method~\cite{MR5028798}, a two-phase approach for coupled FBSDEs whose diffusion coefficient is independent of both $Y$ and $Z$. 
This method replaces direct training of the original coupled system by a two-phase procedure based on a family of equivalent FBSDEs, first approximating their common initial value and then solving the original FBSDE with this value prescribed.
It reduces the difficulty of the coupled optimization at the cost of additional choices concerning the FBSDE family and the allocation of computational effort between the two phases.
For fully coupled FBSDEs, Ji et al.~\cite{8982030} proposed three algorithms to capture the interactions between the forward and backward processes. In a related direction, Ji et al.~\cite{MR4932651} proposed a control method for a high-dimensional stochastic Hamiltonian system, which is essentially a fully coupled FBSDE.

For fully coupled FBSDEs, the dependence of the forward dynamics on the trainable backward variables introduces an additional source of instability, which can become particularly pronounced under strong coupling or over long time horizons.
A more detailed comparison of the discrete formulations of these methods, together with an analysis of this instability mechanism, is provided in Section~\ref{sec:existing_methods_and_motivation}.

\subsection{Outline of the paper}

The paper is organized as follows. 
Section~\ref{sec:existing_methods_and_motivation} introduces the notation, reviews existing methods, and explains the motivation for the proposed approach. 
Section~\ref{sec:deep_truncated_fbsde} presents the deep truncated FBSDE method.
The gradient structure of the discrete scheme, including the attenuation of terminal-loss gradients and the shortcuts induced by the pathwise consistency term, is analyzed in Section~\ref{sec:analysis_gradient}.
Section~\ref{sec:error_analysis} provides the error estimates and conditional convergence analysis.
Numerical experiments in both low- and high-dimensional settings are reported in Section~\ref{sec:numerical_experiments}.
Finally, Section~\ref{sec:conclusions} contains concluding remarks and a discussion of possible directions for future research.

\section{Existing Methods and Motivation}\label{sec:existing_methods_and_motivation}

\subsection{Notations}\label{subsec:notation}

Let $(\Omega, \mathcal{F}, \mathbb{F}, \mathbb{P})$ be a complete filtered probability space on which a standard $d$-dimensional Brownian motion $W = \{W_t\}_{t \in  [0, T]}$ is defined, with $\mathbb{F} \equiv \{\mathcal{F}_t\}_{t \in  [0, T]}$ being its natural filtration.
Letting \(\mathbb{H} = \mathbb{R}^d, \mathbb{R}^m, \mathbb{R}^{m \times d}\), etc., with \(| \cdot| \) being its norm, we define the following spaces
\begin{align*}
    \mathcal{S}_{\mathbb{F}}^2(0,T; \mathbb{H}) 
    &:= \Bigl\{ X \colon [0,T] \times \Omega \to \mathbb{H} \;\bigm|\; X \text{ is } \mathbb{F}\text{-adapted, a.s.\ continuous, and } \mathbb{E} \bigl[ \sup_{0 \leq t \leq T} |X_t|^2 \bigr] < +\infty \Bigr\}, \\
    \mathcal{L}_\mathbb{F}^{2}(0, T; \mathbb{H}) 
    &:= \Bigl\{ X \colon [0, T] \times \Omega \to \mathbb{H} \;\bigm|\; X \text{ is } \mathbb{F}\text{-adapted and } \mathbb{E} \bigl[ \int_0^T |X_t|^2 \,\mathrm{d}t \bigr] < +\infty \Bigr\}.
\end{align*}
A triple of processes \(\left\{ (X_t, Y_t, Z_{t}) \mid 0 \leq t \leq T \right\} \in \mathcal{S}_\mathbb{F}^{2}(0, T; \mathbb{R}^d) \times \mathcal{S}_\mathbb{F}^{2}(0, T; \mathbb{R}^m) \times \mathcal{L}_\mathbb{F}^{2}(0, T; \mathbb{R}^{m \times d})\) is called an adapted solution of FBSDE~\eqref{eq:fully_coupled_FBSDE} if \eqref{eq:fully_coupled_FBSDE} is satisfied in the usual It\^o sense.
Existence and uniqueness results for coupled FBSDEs have been established under various conditions, including sufficiently small time horizons, monotonicity conditions, and weak coupling conditions.
See, for example, Refs.~\refcite{MR1233625,MR1355060,MR1675098,MR1701517}.

Throughout this paper, $|\cdot|$ and $\|\cdot\|$ denote the Euclidean norm and the Frobenius norm, respectively. 
For \(y\in\mathbb{R}^m\) and \(Z\in\mathbb{R}^{m\times d}\), they are given by 
\(|y|=(\sum_{i=1}^m y_i^2)^{1/2}\) and 
\(\|Z\|=(\sum_{i=1}^m\sum_{j=1}^d Z_{ij}^2)^{1/2}\).
In addition, \(\|\cdot\|_2\) denotes the induced matrix \(2\)-norm, defined by 
\(\|A\|_2=\sup_{x\in\mathbb{R}^d\setminus\{0\}}|Ax|/|x|=\sqrt{\lambda_{\max}(A^\top A)}\)
for \(A\in\mathbb{R}^{m\times d}\), and satisfies \(\|A\|_2\leq\|A\|\).

For time discretization, let \(0=t_0<t_1<\cdots<t_N=T\) be a partition of \([0,T]\), and set
\(\Delta t_n:=t_{n+1}-t_n\) and
\(\Delta W_n:=W_{t_{n+1}}-W_{t_n}\).
The corresponding discrete approximations are denoted by
\(\widetilde{X}_n\approx X_{t_n}\) and
\(\widetilde{Y}_n\approx Y_{t_n}\) for \(n=0,\dots,N\),
and by
\(\widetilde{Z}_n\approx Z_{t_n}\) for \(n=0,\dots,N-1\).

\subsection{Deep FBSDE methods}

We briefly recall the discrete structures of deep FBSDE methods that are most relevant to the motivation of our method.
For a decoupled FBSDE, the deep BSDE method~\cite{MR3736669,MR3847747} uses the discrete scheme
\begin{equation*}
    \begin{aligned}
        \widetilde{X}_{n+1} &=
        \widetilde{X}_n + b(t_n,\widetilde{X}_n)\Delta t_n + \sigma(t_n,\widetilde{X}_n)\Delta W_n, \\
        \widetilde{Y}_{n+1} &=
        \widetilde{Y}_n - f(t_n,\widetilde{X}_n,\widetilde{Y}_n,\widetilde{Z}_n)\Delta t_n + \widetilde{Z}_n\Delta W_n,
    \end{aligned}
    \qquad n=0,\dots,N-1.
\end{equation*}
Here \(\widetilde{Y}_0\) is treated as a trainable parameter \(\theta_{u_0}\), and \(\widetilde{Z}_n\) is approximated by a neural network of the form \(\widetilde{Z}_n=\phi_n(\widetilde{X}_n;\theta_n)\). The parameters
\(
\Theta=\{\theta_{u_0},\theta_0,\dots,\theta_{N-1}\}
\)
are determined by minimizing the terminal mismatch
\begin{equation}\label{eq:DeepBSDE_Loss}
    \Theta^*
    =
    \operatorname*{argmin}_{\Theta}
    \mathbb{E}\bigl[|g(\widetilde{X}_N)-\widetilde{Y}_N(\Theta)|^2\bigr].
\end{equation}
In the decoupled case, the forward trajectory \(\{\widetilde X_n\}_{n=0}^N\) is independent of \(\Theta\).
Thus the optimization acts through the backward recursion, while the simulated forward trajectory remains fixed.

For a fully coupled FBSDE, this separation is no longer available, since the forward trajectory depends on the approximations of \(Y\) and \(Z\).
A direct extension, which we refer to as the coupled deep BSDE method, uses the coupled forward update
\begin{equation*}
    \widetilde{X}_{n+1}
    =
    \widetilde{X}_n
    +
    b(t_n,\widetilde{X}_n,\widetilde{Y}_n,\widetilde{Z}_n)\Delta t_n
    +
    \sigma(t_n,\widetilde{X}_n,\widetilde{Y}_n)\Delta W_n,
    \qquad n=0,\dots,N-1.
\end{equation*}
A typical parametrization takes the form
\(
\widetilde Z_n=\phi_n(\widetilde X_n,\widetilde Y_n;\theta_n).
\)
This formulation includes the coupled scheme of Han and Long~\cite{MR4122227} and its extension in Algorithm~1 of Ji et al.~\cite{8982030} to forward coefficients depending on \(Z\).
The associated loss still relies on terminal matching
\begin{equation}\label{eq:DeepFBSDE_Loss}
    \Theta^*
    =
    \operatorname*{argmin}_{\Theta}
    \mathbb{E}\bigl[|g(\widetilde{X}_N(\Theta))-\widetilde{Y}_N(\Theta)|^2\bigr].
\end{equation}
Unlike in~\eqref{eq:DeepBSDE_Loss}, the terminal forward state now depends on the trainable parameters.
Each parameter update therefore changes the forward trajectory on which the loss is evaluated.
This creates a moving optimization landscape and may lead to unstable feedback between the forward simulation and the backward approximation.

A natural way to weaken this parameter-dependent feedback is to use an iterative decoupling strategy, following the classical fixed-point idea for coupled FBSDEs~\cite{MR1701517}. 
Algorithm~3 of Ji et al.~\cite{8982030} updates the forward equation by using the previous iterates
\begin{equation*}
    \widetilde{X}_{n+1}^{k}
    =
    \widetilde{X}_n^{k}
    +
    b(t_n,\widetilde{X}_n^{k},\widetilde{Y}_n^{k-1},\widetilde{Z}_n^{k-1})\Delta t_n
    +
    \sigma(t_n,\widetilde{X}_n^{k},\widetilde{Y}_n^{k-1})\Delta W_n,
\end{equation*}
for \(n=0,\dots,N-1\), where \(k\) denotes the Picard iteration index, which is identified with the optimization iteration.
However, \(\widetilde Z^k\) is still represented by an independent neural network, and the training remains driven by the terminal mismatch.
Moreover, the forward decoupling uses only the latest previous backward iterate, while the parameter dependence of the forward simulation is not explicitly truncated during optimization and the nonlinear Feynman--Kac relation between \(Y\) and \(Z\) is not directly exploited.

\subsection{Our motivation: breaking the vicious cycle in existing FBSDE solvers}

Although the deep learning-based solvers described above have achieved significant success for decoupled systems, fully coupled settings introduce additional numerical challenges due to the interaction between the forward and backward components.
The main difficulty comes from a recursive error amplification mechanism in the joint simulation of the forward and backward components.
More precisely, two effects are particularly harmful:
\begin{romanlist}[(iii)]
    \item \textbf{Parameter-dependent trajectories.}
    In fully coupled FBSDEs~\eqref{eq:fully_coupled_FBSDE}, the forward process \(X\) depends explicitly on \(Y\) and \(Z\), so the simulated forward trajectories also depend on the trainable parameters \(\Theta\), which we denote by \(X(\Theta)\).
    Therefore, an error in \(\Theta\) affects not only the approximations of \(Y\) and \(Z\), but also the forward trajectories on which the system is evaluated.
    Since the distribution of these trajectories varies with \(\Theta\) during training, the samples entering successive stochastic gradient steps are not drawn from a fixed distribution, which can make the optimization less stable.

    \item \textbf{Optimization on distorted trajectories.}
    Since the loss is evaluated on the parameter-dependent trajectories \(X(\Theta)\), optimization problems such as~\eqref{eq:DeepFBSDE_Loss} may decrease the loss by modifying the sampled forward paths rather than by improving the approximation of the true backward solution \(Y(\Theta)\).
    Consequently, the terminal condition may be fitted along distorted trajectories without correctly learning the underlying backward dynamics.
\end{romanlist}

These two effects reinforce each other and form a vicious cycle, as illustrated in Figure~\ref{fig:vicious_cycle}.
Inaccurate approximations of \(Y\) and \(Z\) distort the forward trajectories, while optimization along these distorted paths may further deteriorate the approximations of \(Y\) and \(Z\), leading to recursive error amplification.
This mechanism is particularly pronounced under strong coupling, where the forward dynamics are highly sensitive to perturbations in \(Y\) and \(Z\), and over long time horizons, where discretization and approximation errors accumulate more severely.

\begin{figure}[!htbp]
    \centering
    \resizebox{\linewidth}{!}{\begin{tikzpicture}[
        >=Stealth,
        every node/.style={font=\large},
        state/.style={
            rectangle,
            rounded corners=6pt,
            draw=black,
            thick,
            minimum width=2.4cm,
            minimum height=1.1cm,
            align=center,
            fill=gray!8
        },
        mainarrow/.style={->, very thick},
        cyclearrow/.style={->, very thick, dashed, draw=red!70!black},
        note/.style={font=\normalsize, align=center}
    ]

    \node[state] (theta1) {Bad\\ $\Theta$};
    \node[state, right=3.4cm of theta1] (x) {Distorted\\ $X(\Theta)$};
    \node[state, right=3.4cm of x] (yz) {Worse\\ $(Y,Z)$};
    \node[state, right=3.8cm of yz] (theta2) {Worse\\ $\Theta$};

    \draw[mainarrow] (theta1) -- node[above=2pt, note] {Inaccurate $(Y,Z)$} coordinate[midway] (mid1) (x);
    \draw[mainarrow] (x) -- node[above=2pt, note] {Error propagation} (yz);
    \draw[mainarrow] (yz) -- node[above=2pt, note] {Implicit optimization} coordinate[midway] (mid2) (theta2);

    \draw[cyclearrow, rounded corners=12pt]
        (theta2.north) -- ++(0, 1.4) coordinate (tr)
        -- node[above, note, text=red!70!black] {\textbf{Vicious Cycle}\\ (Recursive error amplification)} (theta1.north |- tr)
        -- (theta1.north);

    \node[note, text=blue!70!black, below=1.2cm of mid1] (picard_note)
        {\textbf{Iterative decoupling}\\(stabilize the forward reference path)};

    \node[note, text=green!60!black, below=1.2cm of mid2] (grad_note)
        {\textbf{Gradient truncation}\\(cut dependence of $X$ on $\Theta$)};

    \draw[->, dashed, blue!50!black, thick, shorten >=2pt] (picard_note.north) -- (mid1);
    \draw[->, dashed, green!60!black, thick, shorten >=2pt] (grad_note.north) -- (mid2);

    \end{tikzpicture}}
    \caption{Illustration of the instability mechanism in fully coupled FBSDE solvers.}
    \label{fig:vicious_cycle}
\end{figure}

To construct a stable and reliable numerical method, this vicious cycle must be broken. Our approach addresses it at two key points:
\begin{romanlist}[(iii)]
    \item \textbf{Iterative decoupling with a fixed reference path.}
    We introduce an iterative decoupling strategy in which the BSDE is evaluated along a forward reference path constructed from previous forward iterates.
    The reference path is kept fixed within each decoupling step and updated through fictitious-play averaging, while the current Brownian batch is reused across the inner iterations and fully exploited before resampling.

    \item \textbf{Gradient truncation in the forward SDE.}
    We apply gradient truncation in the forward equation, which removes the dependence of \(X\) on \(\Theta\) in backpropagation.
    As a result, the simulated trajectories no longer contribute to the gradient through their dependence on \(Y\) and \(Z\).
    The current forward paths are held fixed when differentiating the loss with respect to \(\Theta\), although their values may change when recomputed after a parameter update.
    Thus, the coupling is retained in the numerical forward dynamics, while gradient propagation through the forward trajectory is truncated during optimization.
\end{romanlist}
The complete formulation of these ideas, together with the corresponding numerical scheme and optimization framework, is presented in Section~\ref{sec:deep_truncated_fbsde}.

\section{The Deep Truncated FBSDE Method}\label{sec:deep_truncated_fbsde}

\subsection{Gradient-truncated decoupling with fictitious-play averaging}

To address the instability inherent in fully coupled systems, we introduce a gradient-truncated iterative decoupling strategy with fictitious-play averaging.
The basic idea is to separate the forward and backward components iteratively, while stabilizing the forward reference path through an averaging step and removing the influence of the backward variables on the optimization through the simulated forward trajectories.

To initialize the iterative decoupling, we first specify an initial reference path \(\mathcal X^0\), which serves as the reference trajectory for the first decoupling step.
This path can be prescribed in different ways, for example by using a preliminary rollout based on the original coupled dynamics or by choosing a simple prescribed path.
For \(p\ge1\), given the reference path \(\mathcal X^{p-1}\), we consider the auxiliary decoupling system
\begin{lastnumbercases}[\displaystyle]\label{eq:fp_step_p}
        X^{p}_t = x_0 + \int_{0}^{t} b\!\left(s, X^{p}_s, Y^{p}_s, Z^{p}_s\right)\,\mathrm{d}s
        + \int_{0}^{t} \sigma\!\left(s, X^{p}_s, Y^{p}_s\right)\,\mathrm{d} W_s,\nonumber\\
        Y^{p}_t = g(\mathcal X^{p-1}_T) + \int_{t}^{T} f\!\left(s, \mathcal X^{p-1}_s, Y^{p}_s, Z^{p}_s\right)\,\mathrm{d}s
        - \int_{t}^{T} Z^{p}_s\,\mathrm{d} W_s.
\end{lastnumbercases}
The backward equation is evaluated along the given reference path \(\mathcal X^{p-1}\), while its solution \((Y^p,Z^p)\) is used in the forward equation to generate the current trajectory \(X^p\).
In this way, the forward and backward components are separated at the level of the iteration.
For \(p\ge1\), the reference path is then updated by the running-average rule
\begin{equation}\label{eq:fp_average_continuous}
    \mathcal X_t^{p}
    :=
    \frac{p}{p+1}\mathcal X_t^{p-1}
    +
    \frac{1}{p+1}X_t^{p},
    \qquad t\in[0,T].
\end{equation}
Equivalently, \(\mathcal X_t^p=\frac{1}{p+1}(\mathcal X_t^0+\sum_{j=1}^p X_t^j)\).
Thus, \(\mathcal X^p\) incorporates the initial reference path and the subsequent forward iterates, and serves as the reference trajectory in the next decoupling step.

This averaging step is inspired by belief averaging in fictitious play~\cite{MR3608094} and weighted-average variants discussed in deep fictitious play~\cite{MR4250284}.
More generally, one may also consider moving-average updates with different choices of fixed weights.
We adopt the running-average rule in \eqref{eq:fp_average_continuous} because it provides a gradually updated reference trajectory and is compatible with the convergence analysis.
Compared with a simple Picard update, which uses only the latest forward iterate, the running average can reduce large changes of the reference trajectory between consecutive decoupling steps.
Since this reference trajectory is used as the input to the neural networks in the numerical implementation, the averaging step helps stabilize the iterative procedure.

In the numerical implementation, we apply the stop-gradient operator \(\mathrm{sg}(\cdot)\) to the backward variables entering the forward dynamics.
Specifically, the forward dynamics are evaluated in the form
\[
    X_t^p
    =
    x_0
    +
    \int_0^t
    b\!\left(s,X_s^p,\mathrm{sg}(Y_s^p),\mathrm{sg}(Z_s^p)\right)\,\mathrm{d}s
    +
    \int_0^t
    \sigma\!\left(s,X_s^p,\mathrm{sg}(Y_s^p)\right)\,\mathrm{d}W_s.
\]
For any parameter-dependent quantity \(U(\Theta)\), the notation \(\mathrm{sg}(U(\Theta))\) means that its numerical value is unchanged and treated as fixed when differentiating with respect to \(\Theta\).
This convention applies only to parameter differentiation and does not modify the auxiliary decoupling system~\eqref{eq:fp_step_p}.
The stop-gradient treatment is also conceptually related to the freezing idea in deep fictitious play, where information from previous stages is treated as fixed during the current update~\cite{pmlr-v107-han20a,MR4484148}.
The two constructions play complementary roles.
The reference path fixes the spatial arguments in the backward equation at each decoupling step, while gradient truncation removes the derivative terms arising from the parameter dependence of the current forward trajectory.

\subsection{Nonlinear Feynman--Kac reconstruction along the reference path}

To construct the numerical scheme, we exploit the nonlinear Feynman--Kac formula~\eqref{eq:nonlinear_feynman_kac} to approximate \(Z^p\) from the neural representation of \(Y^p\).
At the \(p\)-th iterative decoupling step, we introduce the neural reference process
\(
\widehat Y_t^p
=
\phi_t(\mathcal X_t^{p-1};\theta_t^{p-1}).
\)
The corresponding approximation of \(Z^p\) is given by
\begin{equation}\label{eq:structure_preserving_yz}
    Z_t^p
    \approx
    D_x\widehat Y_t^p\,
    \sigma\bigl(t,\mathcal X_t^{p-1},\widehat Y_t^p\bigr),
\end{equation}
where \(D_x\widehat Y_t^p\) is computed by automatic differentiation (AD).
Related derivative-based constructions have appeared in single-network schemes~\cite{raissi2024forward,MR4476007} using automatic differentiation, as well as in DBDP2~\cite{MR4081911} using numerical differentiation, whereas here the approximation is constructed from time-step subnetworks evaluated along the reference path.

This formulation has two main advantages:
\begin{romanlist}[(iii)]
    \item \textbf{Refined use of the nonlinear Feynman--Kac relation.}
    Through~\eqref{eq:structure_preserving_yz}, the approximation of \(Z^p\) is constructed directly from the spatial derivative of the neural reference process \(\widehat Y^p\), following the derivative structure suggested by the nonlinear Feynman--Kac relation.
    Consequently, no separate neural parametrization is introduced for \(Z\).
    This differs from methods that represent \(Z\) by a dedicated neural network~\cite{MR3847747,MR3736669,MR5028798,8982030,MR4945146}, or introduce an additional network for \(Z\) alongside a network for \(Y\), such as DBDP1~\cite{MR4081911} and Algorithm~2 of Ji et al.~\cite{8982030}.
    This reduces the number of neural components and avoids an additional source of approximation error associated with separately parameterizing \(Z\), while retaining the derivative-based relation between the approximations of \(Y\) and \(Z\).

    \item \textbf{Pathwise reconstruction without cross-sample normalization.}
    The approximation~\eqref{eq:structure_preserving_yz} is evaluated at every time level along each reference path \(\mathcal X^{p-1}\) and incorporated into the corresponding FBSDE recursion.
    Thus, the derivative-based approximation is used consistently along the whole trajectory.
    Moreover, no additional normalization layer such as batch normalization is introduced, so the forward evaluation along each path does not rely on batch statistics and avoids the associated computational cost.
\end{romanlist}

\subsection{Pathwise consistent optimization over long horizons}\label{subsec:pathwise_optim}

We now introduce the optimization objective used in the proposed numerical scheme.
At the \(p\)-th iterative decoupling step, with the current forward trajectory
\(\widetilde X^p\) held fixed, we consider the following minimization problem
\begin{equation}\label{eq:pathwise_loss}
    \Theta^{p,*}
    \in
    \operatorname*{argmin}_{\Theta}
    \mathbb{E}\bigl[
        |g(\widetilde X_T^p)-\widetilde Y_T^p(\Theta)|^2
        +
        \frac1T\int_0^T
        |\widetilde Y_t^p(\Theta)-\widehat Y_t^p(\Theta)|^2\,\mathrm{d}t
    \bigr].
\end{equation}
Here, \(\widetilde X^p\) and \(\widetilde Y^p(\Theta)\) denote the forward trajectory and the parameter-dependent backward process used in the optimization.
The terminal term enforces the terminal condition of the original FBSDE~\eqref{eq:fully_coupled_FBSDE} along the current forward trajectory \(\widetilde X^p\), while the pathwise consistency term penalizes the discrepancy between \(\widetilde Y^p\) and the neural reference process \(\widehat Y^p\) over the whole interval \([0,T]\).
The factor \(1/T\) keeps the scale of the pathwise consistency term comparable to that of the terminal term as the time horizon varies.

Related intermediate-time penalties and pathwise-difference losses have appeared in Refs.~\refcite{8982030,MR4476007}.
In our method, the discrepancy is measured against a neural reference process evaluated along the reference path.
Under gradient truncation, the current forward trajectory \(\widetilde X^p\) is treated as fixed when differentiating the loss at the current parameter value, so that \(g(\widetilde X_T^p)\) serves as a detached terminal target.
This differs from~\eqref{eq:DeepFBSDE_Loss}, where gradients are also propagated through the parameter-dependent forward trajectory.

This design has two main advantages:
\begin{romanlist}[(iii)]
    \item \textbf{Training toward the target coupled system.}
    Although the backward recursion is evaluated along the reference path, the terminal loss is evaluated at the current forward state \(\widetilde X_T^p\).
    The optimization therefore remains directed toward the terminal condition of the original FBSDE~\eqref{eq:fully_coupled_FBSDE} rather than only the auxiliary decoupling system~\eqref{eq:fp_step_p}.

    \item \textbf{Gradient shortcuts over the whole time interval.}
    The pathwise consistency term provides explicit local gradient contributions at intermediate times.
    These contributions bypass the full temporal propagation chain associated with terminal-only backpropagation and may help alleviate gradient vanishing over long time horizons; see Theorem~\ref{thm:gradient_mitigation} in Section~\ref{sec:analysis_gradient}.
    This whole-interval formulation is also related to continuous-time perspectives on neural networks as dynamical or control systems; see Refs.~\refcite{MR4624336,MR5026265}.
\end{romanlist}

\subsection{Numerical scheme}

We now present the fully discrete numerical scheme for the deep truncated FBSDE method.
For \(p\ge1\), we discretize the auxiliary decoupling system~\eqref{eq:fp_step_p} together with the reference-path update~\eqref{eq:fp_average_continuous}.
We denote the corresponding discrete variables by \(\widetilde X_n^p\), \(\widetilde Y_n^p\), and \(\widetilde Z_n^p\), and the discrete reference path by \(\widetilde{\mathcal X}_n^p\).
For notational simplicity, the dependence of these discrete variables on the parameters is suppressed in the recursive update formulas below, and is made explicit only in the optimization objective.

For the coupled case with \(P>1\), the initial discrete reference path is generated by a preliminary rollout at \(p=0\).
Starting from \(\widetilde X_0^0=x_0\) and \(\widetilde Y_0^0=\theta_{u_0}^{\,0}\), the initialization follows the same neural reconstruction and discrete FBSDE recursion described below, with the current trajectory \(\widetilde X^0\) used in place of the reference path.
After the rollout, we set \(\widetilde{\mathcal X}_n^0:=\widetilde X_n^0\) for \(n=0,\dots,N\), which provides the initial reference path for the subsequent decoupling steps.

For \(p\ge1\), given the reference path \(\widetilde{\mathcal X}^{p-1}\), we set
\(\widetilde X_0^p=x_0\) and \(\widetilde Y_0^p=\theta_{u_0}^{\,p-1}\).
The nonlinear Feynman--Kac reconstruction is implemented at the discrete time levels as
\begin{equation}\label{eq:structure_preserving_yz_discrete}
    \widehat Y_n^p=\phi_n(\widetilde{\mathcal{X}}_n^{p-1};\theta_n^{\,p-1}),
    \quad
    \widetilde Z_n^p
    =
    \mathrm{AD}(\widehat Y_n^p)\,
    \sigma(t_n,\widetilde{\mathcal{X}}_n^{p-1},\widehat Y_n^p),
    \quad n=0,\dots,N-1.
\end{equation}
Throughout this paper, the networks \(\phi_n\) are taken to be standard fully connected feedforward neural networks, without additional architectural techniques. 

The time-discrete counterpart of~\eqref{eq:fp_step_p} is written in forward recursive form as
\begin{equation}\label{eq:fbsde_Y_discrete}
    \widetilde{Y}_{n+1}^p
    =
    \widetilde{Y}_n^p
    -
    f(t_n,\widetilde{\mathcal{X}}_n^{p-1},\widetilde{Y}_n^p,\widetilde{Z}_n^p)\Delta t_n
    +
    \widetilde{Z}_n^p\Delta W_n,
    \qquad n=0,\dots,N-1,
\end{equation}
and
\begin{equation}\label{eq:fbsde_X_discrete}
    \widetilde{X}_{n+1}^p
    =
    \widetilde{X}_n^p
    +
    b(t_n,\widetilde{X}_n^p,\mathrm{sg}(\widetilde Y_n^p),\mathrm{sg}(\widetilde Z_n^p))\Delta t_n
    +
    \sigma(t_n,\widetilde{X}_n^p,\mathrm{sg}(\widetilde Y_n^p))\Delta W_n,
    \qquad n=0,\dots,N-1.
\end{equation}
The forward equation~\eqref{eq:fbsde_X_discrete} is discretized by the Euler--Maruyama scheme, while the backward equation~\eqref{eq:fbsde_Y_discrete} is evaluated recursively forward in time.

Once \(\widetilde X^p\) has been computed, the reference path is updated only when the iterative decoupling is continued to the next step.
In that case, we use the discrete fictitious-play average
\begin{equation}\label{eq:fictitious_play_X}
    \widetilde{\mathcal X}_n^p
    :=
    \frac{p}{p+1}\widetilde{\mathcal X}_n^{p-1}
    +
    \frac{1}{p+1}\widetilde X_n^p,
    \qquad n=0,\dots,N.
\end{equation}
This is the discrete counterpart of the continuous averaging rule in~\eqref{eq:fp_average_continuous}.
Figure~\ref{Fig:Architecture} illustrates the computational architecture of the discrete scheme at a generic iterative decoupling step.

\begin{figure}[tbp]
    \centering
    \resizebox{\linewidth}{!}{\begin{tikzpicture}[
            >={Stealth[length=3mm, width=2mm]},
            var_base/.style={
                thick,
                rounded corners=4pt,
                minimum width=2.25cm,
                minimum height=1.0cm,
                align=center,
                font=\Large,
                inner sep=2pt
            },
            cyan_node/.style={
                var_base,
                draw=cyan!80!blue,
                fill=cyan!10,
                text=black
            },
            purple_node/.style={
                var_base,
                draw=blue!70!magenta,
                fill=blue!10,
                text=black
            },
            xprev_node/.style={
                var_base,
                draw=blue!90!black,
                fill=blue!10,
                text=black
            },
            green_node/.style={
                var_base,
                draw=green!60!black,
                fill=green!10,
                text=black
            },
            peach_node/.style={
                var_base,
                draw=magenta!80!red,
                fill=magenta!10,
                text=black
            },
            orange_node/.style={
                var_base,
                draw=orange!90!red,
                fill=orange!10,
                text=black
            },
            gray_node/.style={
                var_base,
                draw=black!50,
                fill=black!10,
                text=black
            },
            empty_node/.style={
                font=\Huge,
                text=black,
                inner sep=2pt
            },
            transparent_node/.style={
                var_base,
                draw=none,
                fill=none,
                text=black,
                font=\Huge
            },
            cyan_path/.style={
                draw=cyan!80!blue,
                thick
            },
            purple_path/.style={
                draw=blue!70!magenta,
                thick
            },
            orange_path/.style={
                draw=orange!90!red,
                thick
            },
            green_path/.style={
                draw=green!60!black,
                thick
            },
            main_cyan/.style={
                draw=cyan!80!blue,
                thick
            },
            main_purple/.style={
                draw=blue!70!magenta,
                thick
            }
        ]
        \def\yY{0}
        \def\ySgY{-1.6}
        \def\yYhat{-3.2}
        \def\yZ{-3.2}
        \def\yPhi{-4.8}
        \def\ySgZ{-4.8}
        \def\yXprev{-6.4}
        \def\yX{-8.0}
        \def\ydW{-9.6}
        \def\busshift{0.25}

        \newcommand{\drawstep}[3]{\def\x{#2}\def\n{#3}\node[cyan_node] (Y#1) at (\x,\yY)
                {$\widetilde{Y}_{\n}^p$};
            \node[cyan_node] (sgY#1) at (\x,\ySgY)
                {$\mathrm{sg}(\widetilde{Y}_{\n}^p)$};
            \node[peach_node] (Yhat#1) at (\x,\yYhat)
                {$\widehat{Y}_{\n}^p$};
            \node[green_node] (Phi#1) at (\x,\yPhi)
                {$\phi_{\n}$};
            \node[xprev_node] (Xprev#1) at (\x,\yXprev)
                {$\widetilde{\mathcal{X}}_{\n}^{p-1}$};
            \node[purple_node] (X#1) at (\x,\yX)
                {$\widetilde{X}_{\n}^p$};
            \node[orange_node] (Z#1) at (\x+3.0,\yZ)
                {$\widetilde{Z}_{\n}^p$};
            \node[orange_node] (sgZ#1) at (\x+3.0,\ySgZ)
                {$\mathrm{sg}(\widetilde{Z}_{\n}^p)$};
            \draw[->,cyan_path]
                (Y#1) -- (sgY#1);
            \draw[->,orange_path]
                (Z#1) -- (sgZ#1);
            \draw[->,green_path]
                (Xprev#1) -- (Phi#1);
            \draw[->,green_path]
                (Phi#1) -- (Yhat#1);
            \draw[->,orange_path]
                (Yhat#1.east) -- (Z#1.west);
            \draw[->,orange_path,rounded corners=4pt]
                ($(Xprev#1.east)+(0,0.15)$)
                -- (\x+1.45,\yXprev+0.15)
                |- ($(Z#1.west)+(0,-0.20)$);
        }

        \newcommand{\drawstepdots}[2]{\def\x{#2}\node[transparent_node] (Y#1) at (\x,\yY)
                {$\cdots$};
            \node[transparent_node] (sgY#1) at (\x,\ySgY)
                {$\cdots$};
            \node[transparent_node] (Yhat#1) at (\x,\yYhat)
                {$\cdots$};
            \node[transparent_node] (Phi#1) at (\x,\yPhi)
                {$\cdots$};
            \node[transparent_node] (Xprev#1) at (\x,\yXprev)
                {$\cdots$};
            \node[transparent_node] (X#1) at (\x,\yX)
                {$\cdots$};
            \node[transparent_node] (Z#1) at (\x+3.0,\yZ)
                {$\cdots$};
            \node[transparent_node] (sgZ#1) at (\x+3.0,\ySgZ)
                {$\cdots$};
            \draw[->,cyan_path]
                (Y#1) -- (sgY#1);
            \draw[->,orange_path]
                (Z#1) -- (sgZ#1);
            \draw[->,green_path]
                (Xprev#1) -- (Phi#1);
            \draw[->,green_path]
                (Phi#1) -- (Yhat#1);
            \draw[->,orange_path]
                (Yhat#1.east) -- (Z#1.west);
            \draw[->,orange_path,rounded corners=4pt]
                ($(Xprev#1.east)+(0,0.15)$)
                -- (\x+1.45,\yXprev+0.15)
                |- ($(Z#1.west)+(0,-0.20)$);
        }

        \newcommand{\drawtrans}[5]{\def\basex{#4}\def\vxSgYtoX{\basex+4.1+\busshift}\def\vxZ{\basex+4.3+\busshift}\def\vxXprevtoY{\basex+4.5+\busshift}\def\vxDWtoY{\basex+4.7+\busshift}\def\vxDWtoX{\basex+4.90+\busshift}\node[#5] (dW#1) at (\basex+6.7,\ydW)
                {#3};
            \draw[->,main_cyan]
                ($(Y#1.east)+(0,0.15)$)
                -- ($(Y#2.west)+(0,0.15)$);
            \draw[->,main_purple]
                (X#1.east) -- (X#2.west);
            \draw[->,purple_path,rounded corners=4pt]
                (sgY#1.east)
                -- (\vxSgYtoX,\ySgY)
                |- ($(X#2.west)+(0,0.15)$);
            \draw[->,cyan_path,rounded corners=4pt]
                ($(Z#1.east)+(0,0.15)$)
                -- (\vxZ,\yZ+0.15)
                |- (Y#2.west);
            \draw[->,purple_path,rounded corners=4pt]
                (sgZ#1.east)
                -- (\vxZ,\ySgZ)
                |- ($(X#2.west)+(0,0.30)$);
            \draw[->,cyan_path,rounded corners=4pt]
                ($(Xprev#1.east)+(0,-0.15)$)
                -- (\vxXprevtoY,\yXprev-0.15)
                |- ($(Y#2.west)+(0,-0.15)$);
            \draw[->,cyan_path,rounded corners=4pt]
                ($(dW#1.west)+(0,-0.15)$)
                -- (\vxDWtoY,\ydW-0.15)
                |- ($(Y#2.west)+(0,-0.30)$);
            \draw[->,purple_path,rounded corners=4pt]
                ($(dW#1.west)+(0,0.05)$)
                -- (\vxDWtoX,\ydW+0.05)
                |- ($(X#2.west)+(0,-0.15)$);
        }

        \drawstep{0}{0}{0}
        \drawstep{1}{6.7}{1}
        \drawstepdots{dots}{13.4}
        \drawstep{Nm1}{20.1}{N-1}

        \node[cyan_node] (YN) at (26.8,\yY)
            {$\widetilde{Y}_N^p$};
        \node[purple_node] (XN) at (26.8,\yX)
            {$\widetilde{X}_N^p$};

        \drawtrans{0}{1}{$\Delta W_0$}{0}{gray_node}
        \drawtrans{1}{dots}{$\cdots$}{6.7}{transparent_node}
        \drawtrans{dots}{Nm1}{$\Delta W_{N-2}$}{13.4}{gray_node}
        \drawtrans{Nm1}{N}{$\Delta W_{N-1}$}{20.1}{gray_node}
        \end{tikzpicture}}
    \caption[Architecture of the deep truncated FBSDE method]{Architecture of the deep truncated FBSDE method at the \(p\)-th iterative decoupling step.
    }
    \label{Fig:Architecture}
\end{figure}

Let \(\Theta:=\{\theta_{u_0},\theta_0,\ldots,\theta_{N-1}\}\) denote the full set of trainable parameters. 
Define \(\Theta_0:=\{\theta_{u_0}\}\) and
\(\Theta_n:=\{\theta_{u_0},\theta_0,\ldots,\theta_{n-1}\}\) for \(n=1,\dots,N-1\).
Thus, \(\widetilde Y_n\) depends on the parameters accumulated up to time level \(t_n\), while
\(\widehat Y_n\) depends only on the network parameter \(\theta_n\) at the same time level.
The continuous optimization problem~\eqref{eq:pathwise_loss} is discretized at the \(p\)-th iterative decoupling step as
\begin{equation}
\label{eq:pathwise_loss_discrete}
\Theta^{p,*}
\in
\operatorname*{argmin}_{\Theta}
\mathbb{E}\bigl[
|
g(\widetilde X_N^p)
-
\widetilde Y_N^p(\Theta)
|^2
+
\frac1T
\sum_{n=0}^{N-1}
|
\widetilde Y_n^p(\Theta_n)
-
\widehat Y_n^p(\theta_n)
|^2
\Delta t_n
\bigr].
\end{equation}

In practice, the expectation in~\eqref{eq:pathwise_loss_discrete} is approximated by Monte Carlo sampling over simulated Brownian trajectories.
At each outer optimization iteration \(k\), a batch of Brownian increments \(\{\Delta W_n^{k,i}\}_{n=0,\dots,N-1;\,i=1,\dots,B}\) is sampled and kept fixed throughout the subsequent iterative decoupling steps.
For the \(i\)-th trajectory, the corresponding discrete variables are denoted by
\(\widetilde X_n^{k,p,i}\), \(\widetilde Y_n^{k,p,i}\), \(\widehat Y_n^{k,p,i}\), and \(\widetilde Z_n^{k,p,i}\).

Since the same Brownian batch is used throughout the iterative decoupling steps, consecutive trajectories can be compared samplewise.
For \(p>1\), we define the normalized pathwise deviations by
\begin{equation}\label{eq:delta_discrete}
    \Delta_X^{k,p}
    :=
    \max_{1\le i\le B}
    \max_{0\le n\le N}
    \frac{|\widetilde X_n^{k,p,i}-\widetilde X_n^{k,p-1,i}|}{\sqrt d},
    \qquad
    \Delta_Y^{k,p}
    :=
    \max_{1\le i\le B}
    \max_{0\le n\le N}
    \frac{|\widetilde Y_n^{k,p,i}-\widetilde Y_n^{k,p-1,i}|}{\sqrt m}.
\end{equation}
These quantities measure the largest normalized variations of the forward and backward trajectories between two consecutive decoupling steps.
The inner iteration is terminated adaptively once \(\Delta_X^{k,p}<\delta\) and \(\Delta_Y^{k,p}<\delta\).
This joint stopping criterion prevents premature termination when only one component has stabilized and avoids unnecessary computation after both trajectories have become sufficiently close on the current Brownian batch, which may help limit overfitting to the fixed batch.

Whenever \(p=1\), or \(p>1\) and the stopping criterion is not satisfied, the current trajectories are used for the parameter update.
For the \(i\)-th trajectory, the samplewise loss is defined by
\begin{equation*}
    \widetilde L^{k,p,i}(\Theta^{k,p-1})
    :=
    |
        g(\widetilde X_N^{k,p,i})
        -
        \widetilde Y_N^{k,p,i}(\Theta^{k,p-1})
    |^2  
    +
    \frac{1}{T}\sum_{n=0}^{N-1}
    |
        \widetilde Y_n^{k,p,i}(\Theta_n^{k,p-1})
        -
        \widehat Y_n^{k,p,i}(\theta_n^{\,k,p-1})
    |^2 \Delta t_n.
\end{equation*}
The empirical loss used for the parameter update is
\begin{equation}\label{eq:empirical_pathwise_loss}
    \widetilde L^{k,p}(\Theta^{k,p-1})
    :=
    \frac{1}{B}\sum_{i=1}^B
    \widetilde L^{k,p,i}(\Theta^{k,p-1}).
\end{equation}

\begin{algorithm}[!htbp]
\caption{Deep truncated FBSDE method}
\label{Algo:Algorithm-12}
\small
\begin{algorithmic}[1]
    \Input Initial parameters $\Theta^0$; batch size $B$; maximum number $K$ of outer iterations; maximum number $P$ of iterative decoupling steps; time partition $\pi:0=t_0<t_1<\cdots<t_N=T$; initial point $x_0$; terminal condition $g(\cdot)$; tolerance $\delta>0$.
    \Output Optimized parameters $\Theta^K=\{\theta_{u_0}^K,\theta_0^K,\dots,\theta_{N-1}^K\}$.

    \For{$k=1$ to $K$}
        \State Sample a batch of Brownian increments \(\Delta W^{k,i}\); the sample index \(i\) is suppressed below.

        \State \textbf{Initialization:} set \(p\gets 0\), 
        \(\Theta^{k,0}\gets \Theta^{k-1}\), 
        \(\widetilde X_0^{k,0}\gets x_0\), 
        \(\widetilde Y_0^{k,0}\gets \theta_{u_0}^{k,0}\).
        \For{$n=0$ to $N-1$}
            \vspace{0.1em}
            \State $\widehat Y_n^{k,0} \gets \phi_n(\widetilde X_n^{k,0};\theta_n^{\,k,0})$;
            \State $\widetilde Z_n^{k,0} \gets \mathrm{AD}(\widehat Y_n^{k,0})\,\sigma(t_n,\widetilde X_n^{k,0},\widehat Y_n^{k,0})$;
            \State $\widetilde Y_{n+1}^{k,0} \gets \widetilde Y_n^{k,0}
            -f(t_n,\widetilde X_n^{k,0},\widetilde Y_n^{k,0},\widetilde Z_n^{k,0}) \Delta t_n
            +\widetilde Z_n^{k,0} \Delta W_n^{k}$;
            \State $\widetilde X_{n+1}^{k,0}\gets \widetilde X_n^{k,0}
            +b(t_n,\widetilde X_n^{k,0},\mathrm{sg}(\widetilde Y_n^{k,0}),\mathrm{sg}(\widetilde Z_n^{k,0}))\Delta t_n
             +\sigma(t_n,\widetilde X_n^{k,0},\mathrm{sg}(\widetilde Y_n^{k,0}))\Delta W_n^{k};$
        \EndFor

        \State \textbf{Iterative decoupling:} set $p\gets 1$ and $\widetilde{\mathcal{X}}_n^{k,0}\gets \widetilde X_n^{k,0}$ for $n=0,\dots,N$;
        \While{$p\le P$}
            \vspace{0.1em}
            \State Initialize $\widetilde X_0^{k,p}\gets x_0,\ \widetilde Y_0^{k,p}\gets \theta_{u_0}^{\,k,p-1}$;
            \For{$n=0$ to $N-1$}
                \vspace{0.1em}
                \State $\widehat Y_n^{k,p} \gets \phi_n(\widetilde{\mathcal{X}}_n^{k,p-1};\theta_n^{\,k,p-1})$;
                \State $\widetilde Z_n^{k,p} \gets \mathrm{AD}(\widehat Y_n^{k,p})\,\sigma(t_n,\widetilde{\mathcal{X}}_n^{k,p-1},\widehat Y_n^{k,p})$;
                \State $\widetilde Y_{n+1}^{k,p} \gets 
                \begin{aligned}[t]
                    & \widetilde Y_n^{k,p}
                    -f(t_n,\widetilde{\mathcal{X}}_n^{k,p-1},\widetilde Y_n^{k,p},\widetilde Z_n^{k,p}) \Delta t_n + \widetilde Z_n^{k,p} \Delta W_n^{k};
                \end{aligned}$
                \State $\widetilde X_{n+1}^{k,p}\gets \widetilde X_n^{k,p}
                +b(t_n,\widetilde X_n^{k,p},\mathrm{sg}(\widetilde Y_n^{k,p}),\mathrm{sg}(\widetilde Z_n^{k,p}))\Delta t_n
                 +\sigma(t_n,\widetilde X_n^{k,p},\mathrm{sg}(\widetilde Y_n^{k,p}))\Delta W_n^{k};$
            \EndFor

            \If{$p>1$}
                \State Compute \(\Delta_X^{k,p}\) and \(\Delta_Y^{k,p}\) by \eqref{eq:delta_discrete}; \textbf{break} if \(\max\{\Delta_X^{k,p},\Delta_Y^{k,p}\}<\delta\).
            \EndIf

            \State Set
            \(\displaystyle
            \widetilde{\mathcal X}^{k,p}
            \gets
            \frac{p}{p+1}\widetilde{\mathcal X}^{k,p-1}
            +
            \frac{1}{p+1}\widetilde X^{k,p}
            \)
            when \(p<P\).

            \State Compute the empirical loss $\widetilde L^{k,p}$ by \eqref{eq:empirical_pathwise_loss}.
            \State Update \(\Theta^{k,p-1}\) to \(\Theta^{k,p}\) by applying a stochastic gradient descent (SGD)-type optimizer to \(\widetilde L^{k,p}\).
            \State \(p\gets p+1\).
        \EndWhile

        \State $\Theta^k\gets \Theta^{k,p-1}$;
    \EndFor

    \State \textbf{return} $\Theta^K$;
\end{algorithmic}
\end{algorithm}

Algorithm~\ref{Algo:Algorithm-12} summarizes the implementation of our method for \(P>1\).
For notational simplicity, the sample index \(i\) is suppressed in the discrete recursions.
The same Brownian batch is reused throughout the inner decoupling loop, and one parameter update is performed at each completed decoupling step unless the stopping criterion is satisfied.
The choice of \(P\) reflects a trade-off between the decoupling error and computational cost.
A larger \(P\) allows further refinement of the iterative decoupling procedure, but requires additional forward--backward updates.

\begin{remark}[Decoupled case]
For a decoupled FBSDE, the forward dynamics are independent of the backward variables, and hence no iterative decoupling is required.
Accordingly, one sets \(P=1\) and omits the initialization step \(p=0\), since no initial reference path \(\widetilde{\mathcal X}^{k,0}\) is needed.
The algorithm directly performs one discrete FBSDE simulation over the current Brownian batch and applies the corresponding parameter update.
The initialization step and reference-path averaging are retained only for coupled FBSDEs with \(P>1\).
In particular, the decoupled experiments provide a reduced form of the method in which gradient truncation and fictitious-play averaging are absent by construction.
\end{remark}

\section{Gradient Propagation and Gradient Shortcuts}
\label{sec:analysis_gradient}

In this section, we analyze the gradient structure induced by the proposed method and the gradient shortcuts generated by the pathwise consistency term.

\subsection{Assumptions and main theorem}

Throughout this section, we fix a Brownian sample and an iterative decoupling step \(p\ge1\), and suppress both indices.
We consider gradients with respect to the subnetwork parameter \(\theta_i\), \(i\in\{0,\ldots,N-2\}\).
Since the reference path is frozen during differentiation, the identities below hold pathwise and therefore also for the empirical loss after averaging over Monte Carlo samples.

We first state the assumptions used below. For derivatives of vector- or matrix-valued maps, the notation \(\|\cdot\|_2\) is understood as the induced norm associated with the norms introduced in Section~\ref{subsec:notation}.

\begin{assumption}[Coefficient regularity]
\label{assum:coeff_reg}
For every \((t,x)\in[0,T]\times\mathbb{R}^d\), it holds that
\(f(t,x,\cdot,\cdot)\in C^1(\mathbb{R}^m\times\mathbb{R}^{m\times d};\mathbb{R}^m)\) and
\(\sigma(t,x,\cdot)\in C^1(\mathbb{R}^m;\mathbb{R}^{d\times d})\).
Moreover, there exist constants \(C_{f,z},C_{\sigma,y},C_\sigma>0\) such that
\[
    \sup_{t,x,y,z}\|D_z f(t,x,y,z)\|_2\le C_{f,z},\qquad
    \sup_{t,x,y}\|D_y\sigma(t,x,y)\|_2\le C_{\sigma,y},\qquad
    \sup_{t,x,y}\|\sigma(t,x,y)\|_2\le C_\sigma.
\]
\end{assumption}

\begin{assumption}[Neural network differentiability]
\label{assum:nn_diff}
For each \(n=0,\ldots,N-1\), the mapping
\((x,\theta_n)\mapsto\phi_n(x;\theta_n)\) is differentiable in \((x,\theta_n)\), and the mixed derivative
\(\partial_{\theta_n}\partial_x\phi_n\) exists.
\end{assumption}

\begin{remark}[Neural networks used in the experiments]
\label{rem:nn_regular_experiment}
In all numerical experiments, the subnetworks \(\phi_n\) use the \(\tanh\) activation function.
Since \(\tanh\in C^\infty(\mathbb{R})\), the differentiability requirements in Assumption~\ref{assum:nn_diff}, including the existence of \(\partial_{\theta_n}\partial_x\phi_n\), are satisfied.
\end{remark}

The following two conditions are used only to quantify the attenuation of the terminal-loss gradient in Lemma~\ref{lem:terminal_gradient_attenuation}.

\begin{assumption}[Neural network boundedness]
\label{assum:nn_bound}
There exists a constant \(C_\phi>0\) such that
\[
    \sup_{0\le n\le N-1}\sup_{x,\theta_n}
    \left(
        \|\partial_x\phi_n(x;\theta_n)\|_2
        \vee
        \|\partial_{\theta_n}\phi_n(x;\theta_n)\|_2
        \vee
        \|\partial_{\theta_n}\partial_x\phi_n(x;\theta_n)\|_2
    \right)
    \le C_\phi.
\]
\end{assumption}

For \(n=0,\ldots,N-1\), define the propagation matrix
\(
    H_n
    :=
    I_m-D_yf(t_n,\widetilde{\mathcal X}_n,\widetilde Y_n,\widetilde Z_n)\Delta t_n.
\)

\begin{assumption}[Propagation damping]
\label{assum:propagation_damping}
There exists a constant \(\eta>0\) such that, for every \(i\in\{0,\ldots,N-2\}\),
\begin{equation*}
    \Bigl\|
    \prod_{n=i+1}^{N-1}H_n
    \Bigr\|_2
    \le
    e^{-\eta(T-t_{i+1})}.
\end{equation*}
Here \(\prod_{n=a}^{b}H_n:=H_bH_{b-1}\cdots H_a\) for \(a\le b\), and empty products are understood as identity matrices.
\end{assumption}

The total loss \(\mathcal L\) consists of the terminal loss \(\mathcal L_{T}\) and the pathwise consistency loss \(\mathcal L_{C}\), defined by \(\mathcal L_{T}:=|\widetilde Y_N-g(\widetilde X_N)|^2\) and \(\mathcal L_{C}:=\frac1T\sum_{n=0}^{N-1}|\widetilde Y_n-\widehat Y_n|^2\Delta t_n\), respectively, with \(\mathcal L:=\mathcal L_{T}+\mathcal L_{C}\).
The main result is summarized in the following theorem.

\begin{theorem}[Gradient decomposition with local shortcuts]
\label{thm:gradient_mitigation}
Under Assumptions~\ref{assum:coeff_reg}--\ref{assum:nn_diff}, for every \(i\in\{0,\ldots,N-2\}\), the gradient of the total loss admits the decomposition
\begin{equation}
\label{eq:total_grad_explicit}
    \nabla_{\theta_i}\mathcal L
    =
    \frac{2\Delta t_i}{T}
    (\partial_{\theta_i}\phi_i(\widetilde{\mathcal{X}}_i;\theta_i))^\top
    (\widehat Y_i-\widetilde Y_i) 
    +
    \frac{2\Delta t_{i+1}}{T}
    (\partial_{\theta_i}\widetilde Y_{i+1})^\top
    (\widetilde Y_{i+1}-\widehat Y_{i+1})
    +R_{i+1},
\end{equation}
where
\begin{equation}
\label{eq:R_nplus1_def}
    R_{i+1}
    :=
    2(\partial_{\theta_i}\widetilde Y_{i+1})^\top
    \Bigl[
    \sum_{j=i+2}^{N-1}
    \frac{\Delta t_j}{T}
    \bigl(\prod_{n=i+1}^{j-1}H_n\bigr)^\top
    (\widetilde Y_j-\widehat Y_j)
    +
    \bigl(\prod_{n=i+1}^{N-1}H_n\bigr)^\top
    (\widetilde Y_N-g(\widetilde X_N))
    \Bigr].
\end{equation}
\end{theorem}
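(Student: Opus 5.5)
The plan is to apply the chain rule directly to the discrete recursion \eqref{eq:fbsde_Y_discrete}--\eqref{eq:fbsde_X_discrete} for a fixed Brownian sample and a fixed decoupling step. Gradient truncation and the frozen reference path reduce the dependence of \(\mathcal L\) on \(\theta_i\) to a short, explicit chain.

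\textbf{Step 1: which quantities depend on \(\theta_i\).} I would first record this dependency list.
\begin{enumerate}[label=(\alph*)]
\item The reference path \(\widetilde{\mathcal X}\) is fixed.
\item The forward path \(\widetilde X\), and hence \(g(\widetilde X_N)\), is detached because every occurrence of \(\widetilde Y_n,\widetilde Z_n\) in \eqref{eq:fbsde_X_discrete} is wrapped in \(\mathrm{sg}\).
\item \(\widehat Y_n=\phi_n(\widetilde{\mathcal X}_n;\theta_n)\) depends on \(\theta_i\) only when \(n=i\).
\item \(\widetilde Z_n=\partial_x\phi_n(\widetilde{\mathcal X}_n;\theta_n)\,\sigma(t_n,\widetilde{\mathcal X}_n,\phi_n(\widetilde{\mathcal X}_n;\theta_n))\) depends only on \(\theta_n\). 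In particular, it does not depend on \(\widetilde Y_n\); this is the structural point making the propagation matrix exactly \(H_n\).
\item \(\widetilde Y_n\) depends only on \(\Theta_n\), so it is independent of \(\theta_i\) for \(n\le i\).
\end{enumerate}
Assumption~\ref{assum:nn_diff}, through the existence of \(\partial_{\theta_i}\partial_x\phi_i\), together with the \(C^1\) regularity of \(f\) in \((y,z)\) and of \(\sigma\) in \(y\) from Assumption~\ref{assum:coeff_reg}, guarantees that \(\partial_{\theta_i}\widetilde Z_i\) and
\(\partial_{\theta_i}\widetilde Y_{i+1}=(\partial_{\theta_i}\widetilde Z_i)\Delta W_i-D_zf(t_i,\widetilde{\mathcal X}_i,\widetilde Y_i,\widetilde Z_i)[\partial_{\theta_i}\widetilde Z_i]\Delta t_i\)
exist. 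Note that only existence is needed here; the bounds in the assumptions are not used for the identity itself.

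\textbf{Step 2: propagation recursion.} For \(j\ge i+1\), differentiating \eqref{eq:fbsde_Y_discrete} and using that \(\widetilde Z_j\) is independent of \(\theta_i\) and of \(\widetilde Y_j\), I would obtain
\(\partial_{\theta_i}\widetilde Y_{j+1}=H_j\,\partial_{\theta_i}\widetilde Y_j\).
By induction this gives
\(\partial_{\theta_i}\widetilde Y_j=\bigl(\prod_{n=i+1}^{j-1}H_n\bigr)\partial_{\theta_i}\widetilde Y_{i+1}\) for \(i+1\le j\le N\),
with the empty-product convention covering \(j=i+1\).

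\textbf{Step 3: assemble \(\nabla_{\theta_i}\mathcal L\).} I would split \(\mathcal L_C\) by the index \(n\) and treat each group separately.
\begin{enumerate}[label=(\alph*)]
\item Terms with \(n<i\) vanish.
\item The term \(n=i\) contributes \(\frac{2\Delta t_i}{T}(-\partial_{\theta_i}\phi_i)^\top(\widetilde Y_i-\widehat Y_i)\), since \(\widetilde Y_i\) does not depend on \(\theta_i\). This is the first term in \eqref{eq:total_grad_explicit}.
\item The term \(n=i+1\le N-1\), which exists because \(i\le N-2\), contributes \(\frac{2\Delta t_{i+1}}{T}(\partial_{\theta_i}\widetilde Y_{i+1})^\top(\widetilde Y_{i+1}-\widehat Y_{i+1})\). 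This is the second term.
\item The terms \(j=i+2,\dots,N-1\), together with \(\mathcal L_T\), give \(2(\partial_{\theta_i}\widetilde Y_j)^\top(\cdot)\). Here \(\widehat Y_j\) and \(g(\widetilde X_N)\) are constants with respect to \(\theta_i\).
\end{enumerate}
Substituting Step 2 into group (d) and transposing the product pulls out the common factor \(2(\partial_{\theta_i}\widetilde Y_{i+1})^\top\), which yields exactly \(R_{i+1}\) in \eqref{eq:R_nplus1_def}.

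\textbf{Main obstacle.} The main difficulty is not computational but lies in the bookkeeping of Step 1. One must justify rigorously that the stop-gradient in \eqref{eq:fbsde_X_discrete} removes all \(\theta_i\)-dependence of \(\widetilde X_N\), including the indirect dependence through \(\widetilde X_{i+1},\dots,\widetilde X_N\). One must also verify that \(\widetilde Z_j\) carries no dependence on \(\widetilde Y_j\). Both facts follow from \eqref{eq:structure_preserving_yz_discrete}, where \(\widetilde Z_j\) is built from \(\widehat Y_j\) on the reference path rather than from \(\widetilde Y_j\). I would state these facts as a short preliminary observation before the chain-rule computation.
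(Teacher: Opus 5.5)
Your proposal is correct and follows essentially the same argument as the paper. The paper proves the terminal-loss part (the stop-gradient gives $\partial_{\theta_i}\widetilde X_N=0$, $\partial_{\theta_i}\widetilde Z_n=0$ for $n>i$, hence $\partial_{\theta_i}\widetilde Y_{n+1}=H_n\partial_{\theta_i}\widetilde Y_n$) and the consistency-loss part (cases $j<i$, $j=i$, $j=i+1$, $j\ge i+2$) as two separate lemmas and then adds them, whereas you carry out the same chain-rule bookkeeping in a single pass and state explicitly that $\widetilde Z_n$ is built from $\widehat Y_n$ on the frozen reference path rather than from $\widetilde Y_n$, a point the paper uses only implicitly.
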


The first two terms in~\eqref{eq:total_grad_explicit} are local contributions generated by the pathwise consistency loss and do not involve the full propagation chain.
The remainder \(R_{i+1}\) collects the contributions propagated from later time levels and the terminal loss.

\subsection{Gradient propagation and attenuation}

We first study the gradient propagation mechanism associated with the terminal loss.

\begin{lemma}[Attenuation of the terminal-loss gradient]
\label{lem:terminal_gradient_attenuation}
Under Assumptions~\ref{assum:coeff_reg} and~\ref{assum:nn_diff}, for every
\(i\in\{0,\ldots,N-2\}\), the terminal-loss gradient admits the representation
\begin{equation}
\label{eq:grad_LT_unrolled}
    \nabla_{\theta_i}\mathcal L_{T}
    =
    2(\partial_{\theta_i}\widetilde Y_{i+1})^\top
    \bigl(\prod_{n=i+1}^{N-1}H_n\bigr)^\top
    (\widetilde Y_N-g(\widetilde X_N)).
\end{equation}
If, in addition, Assumptions~\ref{assum:nn_bound} and~\ref{assum:propagation_damping} hold, the propagation factor in the terminal-loss gradient satisfies
\begin{equation}
\label{eq:terminal_gradient_bound}
    |\nabla_{\theta_i}\mathcal L_{T}|
    \le
    2C_{\phi,i}|\widetilde Y_N-g(\widetilde X_N)|e^{-\eta(T-t_{i+1})},
\end{equation}
where
\(
    C_{\phi,i}:=
    (C_\phi C_\sigma+C_{\sigma,y}C_\phi^2)
    (C_{f,z}\Delta t_i+|\Delta W_i|).
\)
\end{lemma}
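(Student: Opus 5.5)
The plan is a chain-rule computation along the backward recursion~\eqref{eq:fbsde_Y_discrete}, followed by a norm estimate.

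\textbf{Parameter dependence.} With the Brownian sample and the decoupling step fixed, the reference path \(\widetilde{\mathcal X}\) does not depend on \(\Theta\). Because of the stop-gradient in~\eqref{eq:fbsde_X_discrete}, \(\widetilde X_N\) is also treated as a constant under \(\partial_{\theta_i}\). Hence
\[
\nabla_{\theta_i}\mathcal L_T=2(\partial_{\theta_i}\widetilde Y_N)^\top(\widetilde Y_N-g(\widetilde X_N)).
\]
The parameter \(\theta_i\) first enters at level \(i\): through \(\widehat Y_i=\phi_i(\widetilde{\mathcal X}_i;\theta_i)\) and through \(\widetilde Z_i\) in~\eqref{eq:structure_preserving_yz_discrete}. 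It does not enter \(\widetilde Y_0,\dots,\widetilde Y_i\).

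\textbf{Propagation identity.} For \(n\ge i+1\), the quantities \(\widehat Y_n\) and \(\widetilde Z_n\) depend only on \(\theta_n\) and the frozen path. Differentiating~\eqref{eq:fbsde_Y_discrete} with respect to \(\theta_i\) therefore gives
\[
\partial_{\theta_i}\widetilde Y_{n+1}=\bigl(I_m-D_yf\,\Delta t_n\bigr)\partial_{\theta_i}\widetilde Y_n=H_n\,\partial_{\theta_i}\widetilde Y_n .
\]
Induction then yields \(\partial_{\theta_i}\widetilde Y_N=\bigl(\prod_{n=i+1}^{N-1}H_n\bigr)\partial_{\theta_i}\widetilde Y_{i+1}\). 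Transposing this gives~\eqref{eq:grad_LT_unrolled}, and this part needs only Assumptions~\ref{assum:coeff_reg}--\ref{assum:nn_diff}.

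\textbf{Bound.} By submultiplicativity and Assumption~\ref{assum:propagation_damping},
\[
|\nabla_{\theta_i}\mathcal L_T|\le 2\,\|\partial_{\theta_i}\widetilde Y_{i+1}\|_2\,e^{-\eta(T-t_{i+1})}\,|\widetilde Y_N-g(\widetilde X_N)|.
\]
It remains to show \(\|\partial_{\theta_i}\widetilde Y_{i+1}\|_2\le C_{\phi,i}\). At step \(i\), \(\widetilde Y_i\) is independent of \(\theta_i\), so
\[
\partial_{\theta_i}\widetilde Y_{i+1}=-D_zf\,\partial_{\theta_i}\widetilde Z_i\,\Delta t_i+\partial_{\theta_i}\widetilde Z_i\,\Delta W_i .
\]
Its norm is at most \((C_{f,z}\Delta t_i+|\Delta W_i|)\,\|\partial_{\theta_i}\widetilde Z_i\|_2\), using Assumption~\ref{assum:coeff_reg}. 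Here \(\Delta W_i\) acts by right multiplication, with operator norm \(|\Delta W_i|\).

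\textbf{Main obstacle.} The delicate step is bounding \(\partial_{\theta_i}\widetilde Z_i\) by the product rule. Write \(\widetilde Z_i=\partial_x\phi_i\,\sigma(t_i,\widetilde{\mathcal X}_i,\phi_i)\). This gives two terms:
\begin{itemize}
\item \((\partial_{\theta_i}\partial_x\phi_i)\,\sigma\), bounded by \(C_\phi C_\sigma\);
\item \(\partial_x\phi_i\,(D_y\sigma\,\partial_{\theta_i}\phi_i)\), bounded by \(C_\phi\cdot C_{\sigma,y}\cdot C_\phi\).
\end{itemize}
Both bounds use Assumption~\ref{assum:nn_bound} together with the bounds on \(\sigma\) and \(D_y\sigma\) from Assumption~\ref{assum:coeff_reg}. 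Their sum is \(C_\phi C_\sigma+C_{\sigma,y}C_\phi^2\), which gives exactly \(C_{\phi,i}\).

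The care needed is in interpreting these tensor-valued derivatives (matrix-valued maps differentiated in \(\theta\)) with the induced norms from Section~\ref{subsec:notation}. One must check that the composition and product bounds are consistent with the Frobenius/2-norm conventions, in particular for the contraction with \(\Delta W_i\) and with \(\sigma\). I would fix the convention that each derivative is viewed as a linear map from the parameter space to the corresponding matrix space, equipped with the induced norm. Submultiplicativity then applies directly.
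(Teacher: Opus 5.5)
Your proposal is correct and follows the same route as the paper's proof. The stop-gradient and the frozen reference path reduce the gradient to $2(\partial_{\theta_i}\widetilde Y_N)^\top(\widetilde Y_N-g(\widetilde X_N))$. The recursion $\partial_{\theta_i}\widetilde Y_{n+1}=H_n\,\partial_{\theta_i}\widetilde Y_n$ for $n\ge i+1$ then gives the unrolled product. Finally, the identity $\partial_{\theta_i}\widetilde Y_i=0$ together with the product-rule bound on $\partial_{\theta_i}\widetilde Z_i$ yields $C_{\phi,i}$.

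Your extra care about treating the derivatives as linear maps with induced norms only makes explicit what the paper's convention on $\|\cdot\|_2$ leaves implicit. In particular, using $\|AB\|\le\|A\|\,\|B\|_2$ and $\|AB\|\le\|A\|_2\,\|B\|$ for the Frobenius norm justifies the two product-rule bounds.
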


\begin{proof}
Fix \(i\in\{0,\ldots,N-2\}\).
By the stop-gradient treatment of the forward variables, we have
\(\partial_{\theta_i}\widetilde X_N=0\).
Hence
\(
    \nabla_{\theta_i}\mathcal L_{T}
    =
    2(\partial_{\theta_i}\widetilde Y_N)^\top
    (\widetilde Y_N-g(\widetilde X_N)).
\)

We next derive \(\partial_{\theta_i}\widetilde Y_N\).
For \(n>i\), the parameter \(\theta_i\) does not enter the subnetwork \(\phi_n\), and the reference path \(\widetilde{\mathcal{X}}_n\) is frozen when differentiating with respect to \(\theta_i\). 
Therefore
\(
\partial_{\theta_i}\widetilde Z_n=0
\),
and differentiation of the backward recursion~\eqref{eq:fbsde_Y_discrete} gives
\[
    \partial_{\theta_i}\widetilde Y_{n+1}
    =
    \bigl(I_m-D_yf_n\Delta t_n\bigr)\partial_{\theta_i}\widetilde Y_n
    =
    H_n\partial_{\theta_i}\widetilde Y_n,
    \qquad n=i+1,\ldots,N-1,
\]
where
\(D_yf_n:=D_yf(t_n,\widetilde{\mathcal X}_n,\widetilde Y_n,\widetilde Z_n)\).
Consequently,
\(
    \partial_{\theta_i}\widetilde Y_N
    =
    (\prod_{n=i+1}^{N-1}H_n)
    \partial_{\theta_i}\widetilde Y_{i+1},
\)
which yields~\eqref{eq:grad_LT_unrolled}.

It remains to estimate \(\partial_{\theta_i}\widetilde Y_{i+1}\).
Since \(\theta_i\) first enters the recursion at time \(t_i\),
\(
\partial_{\theta_i}\widetilde Y_i=0
\),
and hence
\[
    \partial_{\theta_i}\widetilde Y_{i+1}
    =
    -D_zf_i\,\partial_{\theta_i}\widetilde Z_i\,\Delta t_i
    +
    (\partial_{\theta_i}\widetilde Z_i)\Delta W_i,
\]
where
\(
D_zf_i
=
D_zf(t_i,\widetilde{\mathcal X}_i,\widetilde Y_i,\widetilde Z_i)
\).
From~\eqref{eq:structure_preserving_yz_discrete}, we have \(\widetilde Z_i=\partial_x\phi_i(\widetilde{\mathcal X}_i;\theta_i)\,\sigma(t_i,\widetilde{\mathcal X}_i,\widehat Y_i)\) and \(\widehat Y_i=\phi_i(\widetilde{\mathcal X}_i;\theta_i)\),
so that
\[
    \partial_{\theta_i}\widetilde Z_i
    =
    \partial_{\theta_i}\partial_x\phi_i(\widetilde{\mathcal X}_i;\theta_i)
    \sigma(t_i,\widetilde{\mathcal X}_i,\widehat Y_i) 
    +
    \partial_x\phi_i(\widetilde{\mathcal X}_i;\theta_i)
    D_y\sigma(t_i,\widetilde{\mathcal X}_i,\widehat Y_i)
    \bigl[\partial_{\theta_i}\phi_i(\widetilde{\mathcal X}_i;\theta_i)\bigr].
\]
Assumptions~\ref{assum:coeff_reg} and~\ref{assum:nn_bound} therefore imply
\(
    \|\partial_{\theta_i}\widetilde Z_i\|_2
    \le
    C_\phi C_\sigma+C_{\sigma,y}C_\phi^2.
\)
Using this bound together with \(\|D_zf_i\|_2\le C_{f,z}\), we obtain
\[
    \|\partial_{\theta_i}\widetilde Y_{i+1}\|_2
    \le
    (C_\phi C_\sigma+C_{\sigma,y}C_\phi^2)
    (C_{f,z}\Delta t_i+|\Delta W_i|)
    =
    C_{\phi,i}.
\]
Taking norms in~\eqref{eq:grad_LT_unrolled}, using submultiplicativity, and combining the above estimate with Assumption~\ref{assum:propagation_damping} yield~\eqref{eq:terminal_gradient_bound}.
\end{proof}

Lemma~\ref{lem:terminal_gradient_attenuation} shows that the terminal-loss gradient for an early-time parameter is propagated through \(\prod_{n=i+1}^{N-1}H_n\), whose norm is exponentially bounded with respect to the effective horizon \(T-t_{i+1}\) under Assumption~\ref{assum:propagation_damping}.
Thus, terminal-loss gradients may become weak over long horizons, motivating the local contributions introduced by the pathwise consistency loss.

\subsection{Local gradient shortcuts from pathwise consistency}

We now show that the pathwise consistency loss produces additional local contributions that do not involve the full propagation chain. 

\begin{lemma}[Gradient shortcuts via pathwise consistency loss]
\label{lem:gradient_shortcut}
Under Assumptions~\ref{assum:coeff_reg} and~\ref{assum:nn_diff}, for every \(i\in\{0,\ldots,N-2\}\), the pathwise consistency loss satisfies
\begin{equation}\label{eq:grad_LC_explicit_simple}
\begin{aligned}[b]
    \nabla_{\theta_i}\mathcal L_{C}
    &=
    \frac{2\Delta t_i}{T}
    (\partial_{\theta_i}\phi_i(\widetilde{\mathcal{X}}_i;\theta_i))^\top
    (\widehat Y_i-\widetilde Y_i) 
    +
    \frac{2\Delta t_{i+1}}{T}
    (\partial_{\theta_i}\widetilde Y_{i+1})^\top
    (\widetilde Y_{i+1}-\widehat Y_{i+1})                               \\
    &\quad
    +
    \sum_{j=i+2}^{N-1}
    \frac{2\Delta t_j}{T}
    (\partial_{\theta_i}\widetilde Y_{i+1})^\top
    \bigl(\prod_{n=i+1}^{j-1}H_n\bigr)^\top
    (\widetilde Y_j-\widehat Y_j).
\end{aligned}
\end{equation}
\end{lemma}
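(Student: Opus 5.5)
The plan is to differentiate $\mathcal L_C=\frac1T\sum_{n=0}^{N-1}|\widetilde Y_n-\widehat Y_n|^2\Delta t_n$ term by term with respect to $\theta_i$. We sort the summands according to how $\theta_i$ enters them, reusing the propagation identity already derived in the proof of Lemma~\ref{lem:terminal_gradient_attenuation}. Each summand contributes $\frac{2\Delta t_n}{T}(\partial_{\theta_i}\widetilde Y_n-\partial_{\theta_i}\widehat Y_n)^\top(\widetilde Y_n-\widehat Y_n)$. The key structural facts, all consequences of the scheme~\eqref{eq:structure_preserving_yz_discrete}--\eqref{eq:fbsde_Y_discrete} together with the frozen reference path $\widetilde{\mathcal X}$, are the following:
\begin{itemize}
\item $\widehat Y_n=\phi_n(\widetilde{\mathcal X}_n;\theta_n)$ depends on $\theta_i$ only when $n=i$, so $\partial_{\theta_i}\widehat Y_n=\delta_{ni}\,\partial_{\theta_i}\phi_i(\widetilde{\mathcal X}_i;\theta_i)$;
\item $\widetilde Y_n$ depends only on $\Theta_n=\{\theta_{u_0},\theta_0,\dots,\theta_{n-1}\}$, so $\partial_{\theta_i}\widetilde Y_n=0$ for $n\le i$.
\end{itemize}

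Consequently, all summands with $n<i$ vanish. The summand with $n=i$ reduces to $-\frac{2\Delta t_i}{T}(\partial_{\theta_i}\phi_i)^\top(\widetilde Y_i-\widehat Y_i)$, which is the first term of~\eqref{eq:grad_LC_explicit_simple}. The summand with $n=i+1$ gives $\frac{2\Delta t_{i+1}}{T}(\partial_{\theta_i}\widetilde Y_{i+1})^\top(\widetilde Y_{i+1}-\widehat Y_{i+1})$, which is the second term. Note that $i\le N-2$ guarantees that the index $i+1\le N-1$ actually occurs in the sum.

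For the summands with $j\ge i+2$, I will invoke the recursion $\partial_{\theta_i}\widetilde Y_{n+1}=H_n\,\partial_{\theta_i}\widetilde Y_n$ for $n\ge i+1$, established in the proof of Lemma~\ref{lem:terminal_gradient_attenuation}. That recursion rests on $\partial_{\theta_i}\widetilde Z_n=0$ for $n>i$ and on the frozen $\widetilde{\mathcal X}_n$, and it requires only Assumptions~\ref{assum:coeff_reg} and~\ref{assum:nn_diff} for differentiability. Iterating it gives $\partial_{\theta_i}\widetilde Y_j=\bigl(\prod_{n=i+1}^{j-1}H_n\bigr)\partial_{\theta_i}\widetilde Y_{i+1}$. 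Transposing this and inserting it into the summand yields the third group of terms.

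The only point requiring care is bookkeeping. One must confirm that the consistency loss involves no index $N$, so the sum over $j$ stops at $N-1$. One must also confirm that the stop-gradient in~\eqref{eq:fbsde_X_discrete} plays no role here, since $\mathcal L_C$ does not contain $\widetilde X$ at all. Neither step is a genuine obstacle.
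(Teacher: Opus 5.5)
Your proposal is correct and follows the paper's proof essentially step for step. You expand $\nabla_{\theta_i}\mathcal L_C$ by the chain rule, discard the summands with $j<i$, and read off the local term at $j=i$ (only $\widehat Y_i$ depends on $\theta_i$) and at $j=i+1$ (only $\widetilde Y_{i+1}$ does), then treat $j\ge i+2$ through the sensitivity recursion $\partial_{\theta_i}\widetilde Y_j=(\prod_{n=i+1}^{j-1}H_n)\partial_{\theta_i}\widetilde Y_{i+1}$ from the proof of Lemma~\ref{lem:terminal_gradient_attenuation}, exactly as the paper does.
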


\begin{proof}
Applying the chain rule to \(\mathcal L_{C}\) with respect to \(\theta_i\) gives
\(
    \nabla_{\theta_i}\mathcal L_{C}
    =
    \sum_{j=0}^{N-1}
    \frac{2\Delta t_j}{T}
    (
    \partial_{\theta_i}\widetilde Y_j
    -
    \partial_{\theta_i}\widehat Y_j
    )^\top
    (\widetilde Y_j-\widehat Y_j).
\)

For \(j<i\), neither \(\widetilde Y_j\) nor \(\widehat Y_j\) depends on \(\theta_i\), so their derivatives with respect to \(\theta_i\) vanish.
At \(j=i\), we have
\(\partial_{\theta_i}\widetilde Y_i=0\).
Since the reference path is frozen during differentiation,
\(\partial_{\theta_i}\widehat Y_i
=
\partial_{\theta_i}\phi_i(\widetilde{\mathcal X}_i;\theta_i)\),
which gives the first term in~\eqref{eq:grad_LC_explicit_simple}.

For \(j>i\), the parameter \(\theta_i\) does not enter the subnetwork \(\phi_j\), and the reference path is frozen during differentiation.
Hence
\(\partial_{\theta_i}\widehat Y_j=0\).
In particular, \(j=i+1\) gives the second term in~\eqref{eq:grad_LC_explicit_simple}.

For \(j\ge i+2\), the sensitivity recursion established in the proof of Lemma~\ref{lem:terminal_gradient_attenuation} gives \(\partial_{\theta_i}\widetilde Y_j=(\prod_{n=i+1}^{j-1}H_n)\partial_{\theta_i}\widetilde Y_{i+1}\).
Substituting this relation into the remaining terms gives the last term in~\eqref{eq:grad_LC_explicit_simple}, which completes the proof.
\end{proof}

The first two terms in~\eqref{eq:grad_LC_explicit_simple} are local contributions at \(t_i\) and \(t_{i+1}\), respectively, and neither contains the full propagation chain \(\prod_{n=i+1}^{N-1}H_n\).
Thus, the pathwise consistency loss introduces local gradient contributions that complement the terminal-loss gradient.

Theorem~\ref{thm:gradient_mitigation} now follows directly from Lemmas~\ref{lem:terminal_gradient_attenuation} and~\ref{lem:gradient_shortcut}.

\begin{proof}[Proof of Theorem~\ref{thm:gradient_mitigation}]
Since
\(
\nabla_{\theta_i}\mathcal L
=
\nabla_{\theta_i}\mathcal L_{T}
+
\nabla_{\theta_i}\mathcal L_{C},
\)
combining~\eqref{eq:grad_LT_unrolled} and~\eqref{eq:grad_LC_explicit_simple} gives the decomposition~\eqref{eq:total_grad_explicit}. 
The contributions at \(t_i\) and \(t_{i+1}\) form the first two terms, while the remaining propagated consistency terms and the terminal-loss contribution are collected in \(R_{i+1}\) as defined in~\eqref{eq:R_nplus1_def}. 
Since the first two terms do not contain the full propagation chain \(\prod_{n=i+1}^{N-1}H_n\), they provide the local gradient contributions stated in the theorem.
\end{proof}

\section{Residual-Based Error Estimates and Conditional Convergence}
\label{sec:error_analysis}

In this section, we derive a residual-based error estimate and establish conditional convergence for the fully discrete FBSDE scheme~\eqref{eq:structure_preserving_yz_discrete}--\eqref{eq:fbsde_X_discrete}.
At each decoupling step, the neural-network approximations used in~\eqref{eq:structure_preserving_yz_discrete} are treated as given deterministic functions, and the initial value of the backward recursion~\eqref{eq:fbsde_Y_discrete} is treated as a prescribed deterministic quantity.
The resulting discrete process is compared with the solution of the original coupled FBSDE~\eqref{eq:fully_coupled_FBSDE}, and only the terminal component of the training loss enters the error analysis.
Expectations are taken exactly, while finite-sample Monte Carlo errors and convergence of the stochastic optimization procedure are not considered.
The convergence result is restricted to the weak-coupling setting specified below, whereas strongly coupled problems are studied numerically in Section~\ref{sec:numerical_experiments}.

\subsection{Notations, assumptions, and main results}
\label{subsec:error_assumptions_main_results}

Let
\(
|\pi|:=\max_{0\le n<N}\Delta t_n
\)
and
\(
\tau(t)=t_n
\)
for \(t\in[t_n,t_{n+1})\), with \(\tau(T)=T\).
For \(p\ge0\), we identify the discrete processes with their piecewise-constant interpolations by setting
\(X_t^{\pi,p}:=\widetilde X_n^p\),
\(Y_t^{\pi,p}:=\widetilde Y_n^p\),
and \(Z_t^{\pi,p}:=\widetilde Z_n^p\)
for \(t\in[t_n,t_{n+1})\), with
\(X_T^{\pi,p}:=\widetilde X_N^p\) and
\(Y_T^{\pi,p}:=\widetilde Y_N^p\).
For \(p\ge1\), define
\(\mathcal X_t^{\pi,p-1}:=\widetilde{\mathcal X}_n^{p-1}\) on
\([t_n,t_{n+1})\) and
\(\mathcal X_T^{\pi,p-1}:=\widetilde{\mathcal X}_N^{p-1}\).

For an adapted triple \((U,V,W)\), define
\begin{equation}
\label{eq:error_norm}
    \|(U,V,W)\|_{\mathcal E}^2
    :=
    \sup_{0\le t\le T}\mathbb{E}\bigl[|U_t|^2+|V_t|^2\bigr]
    +
    \mathbb{E}\int_0^T \|W_t\|^2\,\mathrm{d}t.
\end{equation}
For \(p\ge1\), define the decoupling error
\begin{equation}
\label{eq:error_decoupling_error}
    \mathcal E_{dec}^p
    :=
    \|(X^p-X,Y^p-Y,Z^p-Z)\|_{\mathcal E}^2
\end{equation}
and the numerical error with respect to the \(p\)-th auxiliary decoupling system~\eqref{eq:fp_step_p}
\begin{equation}
\label{eq:error_reference_numerical_error}
    \mathcal D_p^\pi
    :=
    \|(X^{\pi,p}-X^p,Y^{\pi,p}-Y^p,Z^{\pi,p}-Z^p)\|_{\mathcal E}^2.
\end{equation}
Its forward and backward components are
\[
\mathfrak X_p^\pi
:=
\sup_{0\le t\le T}\mathbb{E}|X_t^{\pi,p}-X_t^p|^2,
\qquad
\mathfrak B_p^\pi
:=
\sup_{0\le t\le T}\mathbb{E}|Y_t^{\pi,p}-Y_t^p|^2
+
\mathbb{E}\int_0^T\|Z_t^{\pi,p}-Z_t^p\|^2\,\mathrm{d}t.
\]
For \(p=0\), set
\(
\mathfrak X_0^\pi
:=
\sup_{0\le t\le T}
\mathbb{E} |\mathcal X_t^{\pi,0}-\mathcal X_t^0|^2
=
\sup_{0\le t\le T}
\mathbb{E} |X_t^{\pi,0}-\mathcal X_t^0 |^2,
\)
where \(\mathcal X^{\pi,0}=X^{\pi,0}\) by construction.

We next introduce the terminal loss and the residual quantities used in the error estimate.
For \(p\ge1\), define the terminal loss
\begin{equation}
\label{eq:error_training_loss}
    \mathcal L_{T}^{\pi,p}
    :=
    \mathbb{E}\bigl[|g(X_T^{\pi,p})-Y_T^{\pi,p}|^2\bigr],
\end{equation}
the terminal reference mismatch
\begin{equation*}
    \mathcal R_{\rm term}^{\pi,p}
    :=
    \mathbb{E}\bigl[
        |g(X_T^{\pi,p})-g(\mathcal X_T^{\pi,p-1})|^2
    \bigr],
\end{equation*}
and the reference-path discrepancy
\begin{equation}
\label{eq:error_reference_path}
    \mathcal R_{\rm path}^{\pi,p}
    :=
    \sup_{0\le t\le T}
    \mathbb{E}\bigl[
        |\mathcal X_t^{\pi,p-1}-\mathcal X_t^{p-1}|^2
    \bigr].
\end{equation}
The term \(\mathcal R_{\rm term}^{\pi,p}\) measures the mismatch between the terminal target used in training and the terminal condition \(g(\mathcal X_T^{\pi,p-1})\) of the \(p\)-th auxiliary decoupling system, while \(\mathcal R_{\rm path}^{\pi,p}\) measures the discrepancy between the discrete and continuous reference paths.

We now state the assumptions used in the analysis.

\begin{assumption}[Coefficient regularity]
\label{ass:error_coefficients}
There exist nonnegative constants \(L_b^x\), \(L_b^y\), \(L_b^z\), \(L_\sigma^x\), \(L_\sigma^y\), \(L_f^x\), \(L_f^y\), \(L_f^z\), \(L_g\), \(L_b^t\), \(L_\sigma^t\), and \(L_f^t\), and a constant \(L_0>0\) such that, for all \(t,s\in[0,T]\), \(x,x'\in\mathbb{R}^d\), \(y,y'\in\mathbb{R}^m\), and \(z,z'\in\mathbb{R}^{m\times d}\),
\begin{equation}
\label{eq:error_coeff_spatial_lipschitz}
\begin{aligned}[b]
|b(t,x,y,z)-b(t,x',y',z')|
&\le
L_b^x|x-x'|+L_b^y|y-y'|+L_b^z\|z-z'\|,
\\
\|\sigma(t,x,y)-\sigma(t,x',y')\|
&\le
L_\sigma^x|x-x'|+L_\sigma^y|y-y'|,
\\
|f(t,x,y,z)-f(t,x',y',z')|
&\le
L_f^x|x-x'|+L_f^y|y-y'|+L_f^z\|z-z'\|,
\\
|g(x)-g(x')|
&\le
L_g|x-x'|.
\end{aligned}
\end{equation}
Moreover,
\begin{equation}
\label{eq:error_coeff_origin_bound}
\sup_{0\le t\le T}
\left(
|b(t,0,0,0)|
+
\|\sigma(t,0,0)\|
+
|f(t,0,0,0)|
\right)
+
|g(0)|
\le
L_0,
\end{equation}
and
\begin{equation*}
\begin{aligned}
|b(t,x,y,z)-b(s,x,y,z)|
&\le
L_b^t(1+|x|+|y|+\|z\|)|t-s|^{1/2},
\\
\|\sigma(t,x,y)-\sigma(s,x,y)\|
&\le
L_\sigma^t(1+|x|+|y|)|t-s|^{1/2},
\\
|f(t,x,y,z)-f(s,x,y,z)|
&\le
L_f^t(1+|x|+|y|+\|z\|)|t-s|^{1/2}.
\end{aligned}
\end{equation*}
\end{assumption}

\begin{remark}[Linear growth]
\label{rem:error_coefficient_linear_growth}
The spatial Lipschitz bounds~\eqref{eq:error_coeff_spatial_lipschitz} and the origin bound~\eqref{eq:error_coeff_origin_bound} imply
\begin{equation}
\label{eq:error_coefficient_linear_growth}
\begin{aligned}[b]
|b(t,x,y,z)|
&\le
L_0
+
L_b^x|x|
+
L_b^y|y|
+
L_b^z\|z\|,
\\
\|\sigma(t,x,y)\|
&\le
L_0
+
L_\sigma^x|x|
+
L_\sigma^y|y|,
\\
|f(t,x,y,z)|
&\le
L_0
+
L_f^x|x|
+
L_f^y|y|
+
L_f^z\|z\|,
\\
|g(x)|
&\le
L_0+L_g|x|.
\end{aligned}
\end{equation}
\end{remark}

\begin{assumption}[Well-posedness]
\label{ass:error_wellposedness}
The FBSDE~\eqref{eq:fully_coupled_FBSDE} and, for every \(p\ge1\), the auxiliary decoupling system~\eqref{eq:fp_step_p} admit unique adapted solutions \((X,Y,Z)\) and \((X^p,Y^p,Z^p)\), respectively.
Moreover,
\(
\mathcal X^0\in\mathcal S_{\mathbb F}^2(0,T;\mathbb{R}^d),
\)
and all these solutions belong to
\(
\mathcal S_{\mathbb F}^2(0,T;\mathbb{R}^d)
\times
\mathcal S_{\mathbb F}^2(0,T;\mathbb{R}^m)
\times
\mathcal L_{\mathbb F}^2(0,T;\mathbb{R}^{m\times d}).
\)
\end{assumption}

\begin{assumption}[Uniform neural bounds]
\label{ass:error_uniform_neural_bounds}
There exist constants \(K_{\rm nn},K_\phi>0\), independent of
\(\pi\), \(p\), and \(n\), such that, for every \(p\ge0\),
\[
    |\theta_{u_0}^{\,p}|
    +
    \sup_{0\le n\le N-1}
    |\phi_n(0;\theta_n^{\,p})|
    \le K_{\rm nn},
    \qquad
    \sup_{0\le n\le N-1}
    \sup_{x\in\mathbb{R}^d}
    \|\partial_x\phi_n(x;\theta_n^{\,p})\|_2
    \le K_\phi,
\]
where \(x\mapsto\phi_n(x;\theta_n^{\,p})\) is differentiable for every \(n=0,\ldots,N-1\).
\end{assumption}

For later use, define the constants
\begin{equation}
\label{eq:error_stability_constants}
\begin{aligned}[b]
\kappa_x&:=12T(L_b^x)^2+8(L_\sigma^x)^2,\quad
\kappa_y:=12T(L_b^y)^2+8(L_\sigma^y)^2,\quad
\kappa_z:=12T(L_b^z)^2,\\
\beta_f&:=2L_f^y+2(L_f^z)^2+2,\quad
K_F:=\exp(\kappa_xT)\max\{\kappa_yT,\kappa_z\},\quad
K_B:=3\exp(\beta_fT)[L_g^2+T(L_f^x)^2].
\end{aligned}
\end{equation}

\begin{theorem}[Residual-based error estimate]
\label{thm:error_estimate}
Let Assumptions~\ref{ass:error_coefficients}, \ref{ass:error_wellposedness}, and \ref{ass:error_uniform_neural_bounds} hold.
Then, for every fixed \(p\ge1\), there exists a constant \(\Lambda_p>0\), independent of \(\pi\), such that
\begin{equation}
\label{eq:error_main_bound}
\|(X^{\pi,p}-X,Y^{\pi,p}-Y,Z^{\pi,p}-Z)\|_{\mathcal E}^2
\le
\Lambda_p
(
|\pi|
+
\mathcal L_{T}^{\pi,p}
+
\mathcal R_{\rm term}^{\pi,p}
+
\mathcal R_{\rm path}^{\pi,p}
)
+
2\mathcal E_{dec}^p.
\end{equation}
Moreover, for every \(\zeta>0\), there exists a sequence of positive constants \((A_{p,\zeta})_{p\ge1}\), independent of \(\pi\), such that, for every partition
\(\pi\),
\begin{equation}
\label{eq:error_refined_bound}
\|(X^{\pi,p}-X,Y^{\pi,p}-Y,Z^{\pi,p}-Z)\|_{\mathcal E}^2
\le
A_{p,\zeta}
\left(
|\pi|
+
\mathcal L_{T}^{\pi,p}
+
\mathcal R_{\rm term}^{\pi,p}
\right)
+
\frac{\Gamma_\zeta}{p}
\sum_{j=0}^{p-1}\mathfrak X_j^\pi
+
2\mathcal E_{dec}^p ,
\end{equation}
where \(\Gamma_\zeta:=2(1+\zeta)K_B(1+K_F)\).
\end{theorem}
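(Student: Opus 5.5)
The plan is to split the error into a decoupling part and a numerical part. By the triangle inequality in the norm \(\|\cdot\|_{\mathcal E}\) and \((a+b)^2\le 2a^2+2b^2\),
\[
\|(X^{\pi,p}-X,Y^{\pi,p}-Y,Z^{\pi,p}-Z)\|_{\mathcal E}^2\le 2\mathcal D_p^\pi+2\mathcal E_{dec}^p .
\]
This already produces the term \(2\mathcal E_{dec}^p\). What remains is to bound \(\mathcal D_p^\pi\), the discrepancy between the fully discrete scheme and the \(p\)-th auxiliary decoupled system~\eqref{eq:fp_step_p}. We write \(\mathcal D_p^\pi\le \mathfrak X_p^\pi+\mathfrak B_p^\pi\) and treat the backward and forward components separately, in that order.

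\textbf{Backward component.} Here we follow the a posteriori strategy of Han--Long. The discrete recursion~\eqref{eq:fbsde_Y_discrete} is viewed as an Euler scheme for a BSDE whose driver is evaluated along \(\mathcal X^{\pi,p-1}\), while \(Y^p\) is driven along \(\mathcal X^{p-1}\).

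\begin{enumerate}[label=(\alph*)]
\item We write \(Y^{\pi,p}\) as the solution of a discrete BSDE with terminal value \(Y_T^{\pi,p}\). Its terminal mismatch with \(g(\mathcal X^{p-1}_T)\) is split as
\[
Y_T^{\pi,p}-g(X_T^{\pi,p}),\qquad g(X_T^{\pi,p})-g(\mathcal X_T^{\pi,p-1}),\qquad g(\mathcal X_T^{\pi,p-1})-g(\mathcal X_T^{p-1}).
\]
These contribute \(\mathcal L_T^{\pi,p}\), \(\mathcal R_{\rm term}^{\pi,p}\), and \(L_g^2\mathcal R_{\rm path}^{\pi,p}\), respectively.
\item A standard Itô/discrete-energy estimate with the weight \(e^{\beta_f t}\) then gives
\[
\mathfrak B_p^\pi\le K_B\bigl(\mathcal R_{\rm path}^{\pi,p}\ \text{or its time average}\bigr)+C(|\pi|+\mathcal L_T^{\pi,p}+\mathcal R_{\rm term}^{\pi,p}).
\]
The factor \(3\) in \(K_B\) comes from the three-way split, and \(L_f^x\) enters through the driver's dependence on the path.
\item The \(|\pi|\) contribution arises from the \(1/2\)-Hölder time regularity of the coefficients, from the \(L^2\) path regularity of \((Y^p,Z^p)\) (Zhang-type regularity for the decoupled BSDE with Lipschitz data), and from linear growth, Remark~\ref{rem:error_coefficient_linear_growth}.
\item Assumption~\ref{ass:error_uniform_neural_bounds} gives uniform moment bounds for \(\widetilde Y^p,\widetilde Z^p\). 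Since \(\|\widetilde Z_n\|\le K_\phi\|\sigma\|\), these bounds are uniform in \(\pi\).
\end{enumerate}

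\textbf{Forward component.} We compare the Euler scheme~\eqref{eq:fbsde_X_discrete} with \(X^p\). Gradient truncation does not change values, so \(\mathrm{sg}\) can be dropped in this step.

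\begin{enumerate}[label=(\alph*)]
\item Itô isometry, Cauchy--Schwarz, and Lipschitz bounds give
\[
\mathbb{E}|X_t^{\pi,p}-X_t^p|^2\le \kappa_x\int_0^t \mathbb{E}|\cdot|^2\,\mathrm{d}s+\kappa_y T\sup\mathbb{E}|Y^{\pi,p}-Y^p|^2+\kappa_z\mathbb{E}\int\|Z^{\pi,p}-Z^p\|^2+C|\pi| .
\]
\item Gronwall then yields \(\mathfrak X_p^\pi\le K_F\mathfrak B_p^\pi+C|\pi|\), which is the role of \(K_F\).
\end{enumerate}

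Combining the two components gives \(\mathcal D_p^\pi\le (1+K_F)\mathfrak B_p^\pi+C|\pi|\), hence~\eqref{eq:error_main_bound} with some \(\Lambda_p\).

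\textbf{Refined bound.} For~\eqref{eq:error_refined_bound} we do not absorb \(\mathcal R_{\rm path}^{\pi,p}\) into a constant; we expand it instead. The difference \(\mathcal X^{\pi,p-1}-\mathcal X^{p-1}\) is linear in the iterates, namely
\[
\mathcal X^{\pi,p-1}-\mathcal X^{p-1}=\frac1p\sum_{j=0}^{p-1}(X^{\pi,j}-X^j),
\]
with the \(j=0\) term being \(\mathcal X^{\pi,0}-\mathcal X^0\). Jensen's inequality then gives \(\mathcal R_{\rm path}^{\pi,p}\le \frac1p\sum_{j=0}^{p-1}\mathfrak X_j^\pi\). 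In the backward estimate we use Young's inequality with parameter \(\zeta\): the path term gets coefficient \((1+\zeta)K_B\), and the other terms get \(\zeta\)-dependent constants. Multiplying by \(2(1+K_F)\) gives \(\Gamma_\zeta\), and the remaining constants are collected into \(A_{p,\zeta}\).

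\textbf{Main obstacle.} The delicate step is the backward estimate. The scheme is run forward from a deterministic \(Y_0\) rather than backward from \(g\), so it is not a conditional-expectation scheme. It must be recast as a BSDE with a nonzero terminal residual plus an Euler consistency error, with sharp enough constants that \(\mathcal R_{\rm path}\) enters with exactly \((1+\zeta)K_B\). I would first try the approach used for deep BSDE a posteriori bounds. Define the continuous process \(\bar Y\) solving the BSDE on \([t_n,t_{n+1}]\) with terminal value \(Y^{\pi,p}_T\) and driver frozen at the discrete values. Then apply Itô to \(e^{\beta_f t}|\bar Y_t-Y^p_t|^2\), and control \(\bar Y-Y^{\pi,p}\) by \(|\pi|\) using the martingale representation of \(\widetilde Z_n\Delta W_n\).
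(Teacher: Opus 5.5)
Your plan is essentially the paper's proof. The shared ingredients are: the split into $2\mathcal D_p^\pi+2\mathcal E_{dec}^p$; a continuous extension $\bar Y^{\pi,p}$ whose martingale integrand is exactly $Z^{\pi,p}$; the $e^{\beta_f t}$-weighted It\^o estimate with the three-way terminal split; Gronwall for the forward part; and Jensen on the running average for the refined bound. The paper simply defines $\bar Y^{\pi,p}$ forward from $\theta_{u_0}^{\,p-1}$ as an It\^o process. That process is automatically a BSDE with terminal value $Y_T^{\pi,p}$ and coincides with your backward construction, so your ``main obstacle'' disappears.

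Some bookkeeping claims need correcting, because the refined bound requires the path term with coefficient exactly $(1+\zeta)K_B(1+K_F)$.

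First, drop the appeal to Zhang-type path regularity of $(Y^p,Z^p)$. Since $\bar Z=Z^{\pi,p}$, every $|\pi|$ term comes from two sources only: the $1/2$-H\"older time regularity of the coefficients (combined with discrete moment bounds), and the extension errors $\bar X^{\pi,p}-X^{\pi,p}$, $\bar Y^{\pi,p}-Y^{\pi,p}$. Such regularity would not be available here anyway. The BSDE for $(Y^p,Z^p)$ is driven along the reference path $\mathcal X^{p-1}$, which is not a Markov diffusion and contains an arbitrary $\mathcal X^0\in\mathcal S_{\mathbb F}^2$.

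Second, the factor $3$ in $K_B$ does not come from the terminal split; note that it also multiplies $T(L_f^x)^2$. It comes from adding $\sup_t\mathbb E|\delta Y_t|^2$ to $\mathbb E\int_0^T\|\delta Z_t\|^2\,\mathrm{d}t$, the latter with factor $2$ because only $\tfrac12\|\delta Z\|^2$ survives the absorption. The terminal split must instead be weighted as follows:
\begin{itemize}
\item $(1+\eta^{-1})$ on $\mathcal L_T^{\pi,p}+\mathcal R_{\rm term}^{\pi,p}$ jointly;
\item $(1+\eta)L_g^2$ on the path term;
\item $(1+\eta)$ also on the $L_f^x$ path residual in the driver.
\end{itemize}
A plain factor-$3$ split would push the path coefficient above $(1+\zeta)K_B$ and lose $\Gamma_\zeta$.

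Two smaller points:
\begin{itemize}
\item Your forward bound holds only as $(1+\eta)K_F\mathfrak B_p^\pi+C_\eta|\pi|$ once you return from $\bar X^{\pi,p}$ to $X^{\pi,p}$. This is harmless after absorbing $\eta$ into $\zeta$.
\item The moment bounds in your item (d) need an induction over the earlier iterates, including the $p=0$ rollout. This is because $\sigma$ in $\widetilde Z_n^p$ is evaluated along $\widetilde{\mathcal X}^{p-1}$ and grows only linearly, not boundedly.
\end{itemize}
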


For the convergence analysis, define
\begin{align}
\kappa_1
&:=
2K_\phi^2
(
L_\sigma^x+L_\sigma^yK_\phi
)^2,
\label{eq:error_uniform_moment_kappa}
\\
G^{\rm fwd}
&:=
\max\left\{
12T(L_b^x)^2+9(L_\sigma^x)^2,\,
12T(L_b^y)^2+9(L_\sigma^y)^2
\right\},
\qquad
G^{\rm bwd}
:=
12T(L_f^y)^2,
\label{eq:error_uniform_moment_growth}
\\
\nu_1
&:=
\exp[
(
G^{\rm fwd}+G^{\rm bwd}
)T
]
\left\{
12T^2
[
(L_f^x)^2
+
(
(L_b^z)^2+(L_f^z)^2
)\kappa_1
]
+
3T\kappa_1
\right\}.
\label{eq:error_uniform_moment_constant}
\end{align}

The following assumptions are used only for the convergence result.

\begin{assumption}[Uniform moment condition]
\label{ass:error_uniform_moment_stability}
The constant \(\nu_1\) defined in
\eqref{eq:error_uniform_moment_constant} satisfies
\(
\nu_1<1.
\)
\end{assumption}

\begin{assumption}[Weak coupling]
\label{ass:error_reference_contraction}
The coefficient-dependent stability constants \(K_B\) and \(K_F\)
defined in \eqref{eq:error_stability_constants} satisfy
\(
K_BK_F<1.
\)
\end{assumption}

\begin{assumption}[Residual consistency]
\label{ass:error_residual_consistency}
The terminal loss and terminal reference mismatch satisfy
\[
\lim_{\substack{p\to\infty\\|\pi|\to0}}
\left(
\mathcal L_{T}^{\pi,p}
+
\mathcal R_{\rm term}^{\pi,p}
\right)
=
0.
\]
\end{assumption}

\begin{theorem}[Conditional convergence]
\label{thm:error_convergence}
Let the assumptions of Theorem~\ref{thm:error_estimate} hold.
Suppose further that Assumptions~\ref{ass:error_uniform_moment_stability}, \ref{ass:error_reference_contraction}, and \ref{ass:error_residual_consistency} hold.
Then
\begin{equation}
\label{eq:error_convergence_result}
\lim_{\substack{p\to\infty\\|\pi|\to0}}
\|(X^{\pi,p}-X,Y^{\pi,p}-Y,Z^{\pi,p}-Z)\|_{\mathcal E}^2
=
0.
\end{equation}
\end{theorem}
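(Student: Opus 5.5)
Starting from the refined estimate~\eqref{eq:error_refined_bound} of Theorem~\ref{thm:error_estimate}, convergence follows once each term on the right-hand side goes to zero in the joint limit \(p\to\infty\), \(|\pi|\to0\). The terms \(|\pi|\) and \(\mathcal L_T^{\pi,p}+\mathcal R_{\rm term}^{\pi,p}\) vanish by Assumption~\ref{ass:error_residual_consistency}. That leaves three things: the decoupling error \(\mathcal E_{dec}^p\), the averaged forward discrepancies \(\frac1p\sum_{j<p}\mathfrak X_j^\pi\), and uniform control of the constants \(A_{p,\zeta}\). The plan treats these in turn.

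\textbf{Step 1: the decoupling error.} Compare the auxiliary system~\eqref{eq:fp_step_p} with~\eqref{eq:fully_coupled_FBSDE} using standard BSDE a priori estimates. With the constant \(\beta_f\) this gives
\[
\sup_t\mathbb E|Y^p_t-Y_t|^2+\mathbb E\int_0^T\|Z^p_t-Z_t\|^2\,\mathrm dt\le K_B\sup_t\mathbb E|\mathcal X^{p-1}_t-X_t|^2 .
\]
Applying Gronwall to the forward SDE with \(\kappa_x,\kappa_y,\kappa_z\) then yields
\[
\sup_t\mathbb E|X^p_t-X_t|^2\le K_F\Bigl(\sup_t\mathbb E|Y^p-Y|^2+\mathbb E\int_0^T\|Z^p-Z\|^2\Bigr).
\]
Writing \(e_p:=\sup_t\mathbb E|X^p_t-X_t|^2\) and \(\epsilon_p:=\sup_t\mathbb E|\mathcal X^p_t-X_t|^2\), we get \(e_p\le q\,\epsilon_{p-1}\) with \(q:=K_BK_F<1\). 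The averaging rule~\eqref{eq:fp_average_continuous} and convexity of the squared norm give
\[
\epsilon_p\le \tfrac{p}{p+1}\epsilon_{p-1}+\tfrac{1}{p+1}e_p\le \bigl(1-\tfrac{1-q}{p+1}\bigr)\epsilon_{p-1}.
\]
The product \(\prod_p\bigl(1-\tfrac{1-q}{p+1}\bigr)\) behaves like \(p^{-(1-q)}\), so it tends to zero, hence \(\epsilon_p\to0\). It follows that \(e_p\to0\) and, by the BSDE estimate, \(\mathcal E_{dec}^p\to0\).

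\textbf{Step 2: the averaged forward discrepancies.} This is the step I expect to be the main obstacle. The difficulty is that \(\mathfrak X_j^\pi\) measures the distance between discrete and continuous iterates, and that distance may accumulate across the decoupling steps.

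I would first derive a one-step recursion by repeating the forward part of the stability argument for the discrete scheme~\eqref{eq:fbsde_X_discrete}, giving
\[
\mathfrak X_p^\pi\le K_F\,\mathfrak B_p^\pi + C|\pi|.
\]
The backward part, via the argument behind~\eqref{eq:error_main_bound}, gives
\[
\mathfrak B_p^\pi\le K_B\bigl(\mathcal R_{\rm path}^{\pi,p}+\ldots\bigr)+C(|\pi|+\mathcal L_T^{\pi,p}+\mathcal R^{\pi,p}_{\rm term}).
\]
The reference-path discrepancy is again an average, \(\mathcal R_{\rm path}^{\pi,p}\le\frac1p\sum_{j<p}\mathfrak X_j^\pi\). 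Setting \(s_p:=\frac1p\sum_{j<p}\mathfrak X_j^\pi\), these combine into
\[
s_{p+1}\le\bigl(1-\tfrac{1-q'}{p+1}\bigr)s_p+\tfrac{1}{p+1}\delta_p,
\]
where \(q'\) is a perturbation of \(K_BK_F\), kept below \(1\) by choosing \(\zeta\) small, and \(\delta_p\to0\) in the joint limit.

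Boundedness of \(\mathfrak X_0^\pi\) must be checked separately. Here I would use Assumption~\ref{ass:error_uniform_moment_stability}: together with the uniform neural bounds (Assumption~\ref{ass:error_uniform_neural_bounds}), the contraction constant \(\nu_1<1\) gives second moments of the discrete iterates that are uniform in \(p\) and \(\pi\). This is also what keeps the Lipschitz-type constants, which involve \(K_\phi\) through \(\widetilde Z=\partial_x\phi\,\sigma\), uniform.

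A Cesàro/Toeplitz lemma for recursions of this form then shows \(s_p\to0\) jointly. The care needed is in the order of limits: given \(\varepsilon\), choose \(p_0\) and \(|\pi|\) small so that \(\delta_j<\varepsilon\) for all \(j\ge p_0\), and use the uniform moment bound to control the early terms \(j<p_0\).

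\textbf{Step 3: conclusion.} To keep \(A_{p,\zeta}\) from blowing up, I would avoid invoking~\eqref{eq:error_refined_bound} directly. Instead I would bound the left-hand side of~\eqref{eq:error_convergence_result} by
\[
2\mathcal E_{dec}^p+2\mathcal D_p^\pi,\qquad \mathcal D_p^\pi\lesssim \mathfrak X_p^\pi+\mathfrak B_p^\pi .
\]
Both pieces are controlled with \(p\)-independent constants by the Step 2 recursion. Combining Steps 1–2 then yields~\eqref{eq:error_convergence_result}.
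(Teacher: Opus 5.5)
Your proof is correct and has the same architecture as the paper's. You split the error as $2\mathcal E_{dec}^p+2\mathcal D_p^\pi$, control the decoupling error through $K_B,K_F$ and the running average, and control the numerical error through a recursion driven by $\mathcal R_{\rm path}^{\pi,p}\le\frac1p\sum_{j<p}\mathfrak X_j^\pi$. The condition $\nu_1<1$ supplies the $p$-uniform constants; this is exactly Lemma~\ref{lem:error_uniform_discrete_moment} and Corollary~\ref{cor:error_uniform_refined_estimates}. So your Step~3 is the refined bound with uniform constants, not a way around it. What $\nu_1<1$ actually makes uniform are the moment constants $C_p$ entering the residual constants; the $K_\phi$-dependent constants are uniform by assumption.

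The differences lie in how you close the two limits.

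\textbf{Step~1.} You apply convexity to squared errors, which gives factor $K_BK_F$ and $\mathcal E_{dec}^p\lesssim p^{-(1-K_BK_F)}$. The paper applies Minkowski to root-mean-square errors with $q=\sqrt{K_BK_F}$. This yields the faster rate $p^{-2(1-\sqrt{K_BK_F})}$, since $1-K_BK_F<2(1-\sqrt{K_BK_F})$. Either suffices for convergence.

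\textbf{Step~2.} You run an explicit recursion on the Ces\`aro means, which gives $s_p\le \sup_{j\ge p_0}\delta_j/(1-q')+s_{p_0}\prod_{k=p_0}^{p-1}\bigl(1-\frac{1-q'}{k+1}\bigr)$. The paper instead sets $L=\limsup\mathfrak X_p^\pi$, shows the averages have limsup at most $L$, and concludes from $L\le(1+\zeta)K_BK_F L$. The paper's route is shorter but needs $L<\infty$, i.e.\ $\mathfrak X_p^\pi\le M$ uniformly in $p$ and $\pi$. For that it also uses $\sup_p\sup_t\mathbb E|X_t^p|^2<\infty$, obtained via Lemma~\ref{lem:error_exact_decoupling_convergence}. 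Your recursion needs a bound only up to one fixed $p_0$, propagates boundedness automatically, and gives an explicit estimate.

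Two small points to tighten:
\begin{itemize}
\item Bounding the early terms $\mathfrak X_j^\pi$, $j<p_0$, uniformly in $\pi$ also requires the moments of the continuous iterates $X^j$ and of $\mathcal X^0$ (from Assumption~\ref{ass:error_wellposedness}), not only the discrete moments.
\item $\mathfrak X_p^\pi\le K_F\mathfrak B_p^\pi+C|\pi|$ holds only with $(1+\eta)^2K_F$ in place of $K_F$, because you pass between $\bar X^{\pi,p},\bar Y^{\pi,p}$ and the piecewise-constant processes. Your perturbed $q'$ already absorbs this.
\end{itemize}
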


\subsection{Proof of the error estimate}
\label{subsec:error_proof_error_estimate}

We first establish the growth estimates for the neural quantities used in the discrete scheme.

\begin{lemma}[Growth of the neural quantities]
\label{lem:error_neural_output_growth}
Let Assumptions~\ref{ass:error_coefficients} and~\ref{ass:error_uniform_neural_bounds} hold.
Then, for every \(p\ge1\) and \(0\le n\le N-1\),
\begin{equation}
\label{eq:error_neural_output_growth}
|\widehat Y_n^p|
\le
K_{\rm nn}+K_\phi|\widetilde{\mathcal X}_n^{p-1}|,
\qquad
\|\widetilde Z_n^p\|
\le
K_\phi[
L_0+L_\sigma^yK_{\rm nn}
+\left(L_\sigma^x+L_\sigma^yK_\phi\right)
|\widetilde{\mathcal X}_n^{p-1}|
].
\end{equation}
At the initialization step \(p=0\), for \(0\le n\le N-1\), the corresponding estimates are
\begin{equation}
\label{eq:error_neural_output_growth_initial}
|\widehat Y_n^0|
\le
K_{\rm nn}+K_\phi|\widetilde X_n^0|,
\qquad
\|\widetilde Z_n^0\|
\le
K_\phi[
L_0+L_\sigma^yK_{\rm nn}
+\left(L_\sigma^x+L_\sigma^yK_\phi\right)
|\widetilde X_n^0|
].
\end{equation}
\end{lemma}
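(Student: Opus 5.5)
The estimate follows from the mean value theorem applied to each subnetwork, combined with the linear growth bound for \(\sigma\) in Remark~\ref{rem:error_coefficient_linear_growth}. Fix \(p\ge1\) and \(0\le n\le N-1\), and write \(x:=\widetilde{\mathcal X}_n^{p-1}\).

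\textbf{Bound on \(\widehat Y_n^p\).} By \eqref{eq:structure_preserving_yz_discrete}, \(\widehat Y_n^p=\phi_n(x;\theta_n^{\,p-1})\). Since \(x\mapsto\phi_n(x;\theta_n^{\,p-1})\) is differentiable and its Jacobian is bounded by \(K_\phi\) in the induced \(2\)-norm (Assumption~\ref{ass:error_uniform_neural_bounds}), the mean value inequality along the segment from \(0\) to \(x\) gives
\[
|\phi_n(x;\theta_n^{\,p-1})-\phi_n(0;\theta_n^{\,p-1})|\le K_\phi|x|.
\]
Assumption~\ref{ass:error_uniform_neural_bounds} with index \(p-1\ge0\) gives \(|\phi_n(0;\theta_n^{\,p-1})|\le K_{\rm nn}\). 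The triangle inequality then yields the first bound in \eqref{eq:error_neural_output_growth}.

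\textbf{Bound on \(\widetilde Z_n^p\).} Write \(\widetilde Z_n^p=\partial_x\phi_n(x;\theta_n^{\,p-1})\,\sigma(t_n,x,\widehat Y_n^p)\). For an \(m\times d\) matrix \(A\) and a \(d\times d\) matrix \(B\), the inequality \(\|AB\|\le\|A\|_2\|B\|\) holds: each column of \(AB\) is \(A\) applied to the corresponding column of \(B\), and the Frobenius norm sums the squared column norms. Hence
\[
\|\widetilde Z_n^p\|\le K_\phi\,\|\sigma(t_n,x,\widehat Y_n^p)\|.
\]
By \eqref{eq:error_coefficient_linear_growth},
\[
\|\sigma(t_n,x,\widehat Y_n^p)\|\le L_0+L_\sigma^x|x|+L_\sigma^y|\widehat Y_n^p|.
\]
Inserting the bound on \(|\widehat Y_n^p|\) from the first step and collecting the terms in \(|x|\) gives
\[
\|\widetilde Z_n^p\|\le K_\phi\bigl[L_0+L_\sigma^yK_{\rm nn}+(L_\sigma^x+L_\sigma^yK_\phi)|x|\bigr],
\]
which is the second bound in \eqref{eq:error_neural_output_growth}.

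\textbf{Initialization step \(p=0\).} Here the networks are evaluated at the current trajectory \(\widetilde X_n^0\) with parameters \(\theta_n^{\,0}\), as in the initialization loop of Algorithm~\ref{Algo:Algorithm-12}. Repeating both steps with \(x:=\widetilde X_n^0\), and using Assumption~\ref{ass:error_uniform_neural_bounds} at index \(0\), gives \eqref{eq:error_neural_output_growth_initial}.

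The only step requiring care is the matrix-norm compatibility \(\|AB\|\le\|A\|_2\|B\|\), needed because the paper uses the induced norm for \(\partial_x\phi_n\) and the Frobenius norm for \(\sigma\) and \(\widetilde Z\). The rest is routine.
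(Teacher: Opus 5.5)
Your proof is correct and takes the same route as the paper's. For \(\widehat Y_n^p\) you use the mean value inequality plus the bound at the origin; for \(\widetilde Z_n^p\) you use its product form \(\partial_x\phi_n\,\sigma\) with the linear growth of \(\sigma\); and at \(p=0\) you repeat both steps with \(\widetilde X_n^0\). Your explicit check of \(\|AB\|\le\|A\|_2\|B\|\), which reconciles the induced-norm bound on \(\partial_x\phi_n\) with the Frobenius norms on \(\sigma\) and \(\widetilde Z\), fills in a step the paper uses without comment.
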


\begin{proof}
Fix \(p\ge1\) and \(0\le n\le N-1\).
By~\eqref{eq:structure_preserving_yz_discrete}, the mean value inequality, and Assumption~\ref{ass:error_uniform_neural_bounds},
\[
|\widehat Y_n^p|
=
|\phi_n(\widetilde{\mathcal X}_n^{p-1};\theta_n^{\,p-1})|
\le
|\phi_n(0;\theta_n^{\,p-1})|
+
K_\phi|\widetilde{\mathcal X}_n^{p-1}|
\le
K_{\rm nn}
+
K_\phi|\widetilde{\mathcal X}_n^{p-1}|.
\]
Again by~\eqref{eq:structure_preserving_yz_discrete}, 
\(
\widetilde Z_n^p
=
\partial_x\phi_n(\widetilde{\mathcal X}_n^{p-1};\theta_n^{\,p-1})
\sigma(t_n,\widetilde{\mathcal X}_n^{p-1},\widehat Y_n^p).
\)
Hence, Assumption~\ref{ass:error_uniform_neural_bounds} and the linear growth estimate~\eqref{eq:error_coefficient_linear_growth} give
\[
\begin{aligned}
\|\widetilde Z_n^p\|
&\le
K_\phi
(
L_0
+
L_\sigma^x|\widetilde{\mathcal X}_n^{p-1}|
+
L_\sigma^y|\widehat Y_n^p|
)
\le
K_\phi
[
L_0
+
L_\sigma^yK_{\rm nn}
+
(
L_\sigma^x+L_\sigma^yK_\phi
)
|\widetilde{\mathcal X}_n^{p-1}|
],
\end{aligned}
\]
which proves~\eqref{eq:error_neural_output_growth}.
At \(p=0\), the same argument applied to the initialization rollout in Algorithm~\ref{Algo:Algorithm-12}, with \(\widetilde{\mathcal X}_n^{p-1}\) replaced by \(\widetilde X_n^0\), gives~\eqref{eq:error_neural_output_growth_initial}.
\end{proof}

The preceding growth estimate allows us to derive moment bounds that are uniform with respect to the time mesh.

\begin{lemma}[Moment estimate for the discrete process]
\label{lem:error_numerical_moment}
Suppose that Assumptions~\ref{ass:error_coefficients} and \ref{ass:error_uniform_neural_bounds} hold.
For every fixed \(p\ge0\), there exists a constant \(C_p>0\),
independent of \(\pi\) and \(n\), such that
\begin{equation}
\label{eq:error_numerical_moment_bound}
\max_{0\le n\le N}
\mathbb{E}[
|\widetilde X_n^p|^2+|\widetilde Y_n^p|^2
]
+
\max_{0\le n\le N-1}
\mathbb{E}\|\widetilde Z_n^p\|^2
+
\sum_{n=0}^{N-1}
\mathbb{E}\|\widetilde Z_n^p\|^2\Delta t_n
\le C_p.
\end{equation}
Moreover, for every \(p\ge1\),
\begin{equation}
\label{eq:error_reference_pi_bound}
\max_{0\le n\le N}
\mathbb{E}|\widetilde{\mathcal X}_n^{p-1}|^2
\le C_p.
\end{equation}
\end{lemma}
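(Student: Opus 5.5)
The proof will proceed by induction on \(p\ge0\). The inductive statement is that \(\max_n\mathbb{E}|\widetilde X_n^p|^2\), \(\max_n\mathbb{E}|\widetilde Y_n^p|^2\) and \(\max_n\mathbb{E}\|\widetilde Z_n^p\|^2\) are bounded by a constant \(C_p\) depending only on \(T\), the constants of Assumptions~\ref{ass:error_coefficients} and~\ref{ass:error_uniform_neural_bounds}, \(|x_0|\), and \(C_{p-1}\), but not on \(\pi\). The sum \(\sum_n\mathbb{E}\|\widetilde Z_n^p\|^2\Delta t_n\) is then bounded by \(T\max_n\mathbb{E}\|\widetilde Z_n^p\|^2\). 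The reference path bound~\eqref{eq:error_reference_pi_bound} follows from the averaging rule~\eqref{eq:fictitious_play_X}. Since \(\widetilde{\mathcal X}_n^{p-1}=\frac1p(\widetilde{\mathcal X}_n^0+\sum_{j=1}^{p-1}\widetilde X_n^j)\) and \(\widetilde{\mathcal X}^0=\widetilde X^0\), convexity of \(|\cdot|^2\) gives \(\mathbb{E}|\widetilde{\mathcal X}_n^{p-1}|^2\le\max_{0\le j\le p-1}\max_n\mathbb{E}|\widetilde X_n^j|^2\le\max_{j<p}C_j\). We may then enlarge \(C_p\) so that it dominates this quantity.

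\textbf{Step 1: the case \(p\ge1\), given the reference bound.} By Lemma~\ref{lem:error_neural_output_growth}, \(\|\widetilde Z_n^p\|^2\le 2K_\phi^2[(L_0+L_\sigma^yK_{\rm nn})^2+(L_\sigma^x+L_\sigma^yK_\phi)^2|\widetilde{\mathcal X}_n^{p-1}|^2]\). Taking expectations and using the reference bound, \(\max_n\mathbb{E}\|\widetilde Z_n^p\|^2\) is bounded uniformly in \(\pi\).

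\textbf{Step 2: the backward recursion.} We square~\eqref{eq:fbsde_Y_discrete}. Here \(\widetilde Z_n^p\) and \(\widetilde Y_n^p\) are \(\mathcal F_{t_n}\)-measurable: \(\widetilde{\mathcal X}^{p-1}_n\) is adapted, the networks are deterministic, and \(\widetilde Y_0^p=\theta_{u_0}^{p-1}\) is deterministic. Hence \(\Delta W_n\) is independent of \(\widetilde Y_n^p-f\Delta t_n\), and the cross term has zero expectation. This gives
\[
\mathbb{E}|\widetilde Y_{n+1}^p|^2\le(1+\Delta t_n)\mathbb{E}|\widetilde Y_n^p|^2+(1+\Delta t_n)\Delta t_n^2\,\mathbb{E}|f_n|^2+\Delta t_n\mathbb{E}\|\widetilde Z_n^p\|^2,
\]
where we used \((a+b)^2\le(1+\Delta t)a^2+(1+1/\Delta t)b^2\). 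By the linear growth estimate~\eqref{eq:error_coefficient_linear_growth}, \(\mathbb{E}|f_n|^2\le 4[L_0^2+(L_f^x)^2\mathbb{E}|\widetilde{\mathcal X}_n^{p-1}|^2+(L_f^y)^2\mathbb{E}|\widetilde Y_n^p|^2+(L_f^z)^2\mathbb{E}\|\widetilde Z_n^p\|^2]\). This yields a recursion \(a_{n+1}\le(1+c\Delta t_n)a_n+c'\Delta t_n\) with \(c,c'\) independent of \(\pi\) (for \(|\pi|\le T\)). The discrete Gronwall lemma then gives \(a_n\le(K_{\rm nn}^2+c'T)e^{cT}\).

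\textbf{Step 3: the forward recursion.} The same squaring argument applies to~\eqref{eq:fbsde_X_discrete}; the stop-gradient does not change the values. The diffusion term contributes \(\Delta t_n\mathbb{E}\|\sigma_n\|^2\) by independence, and the linear growth of \(b\) and \(\sigma\) bounds everything by \(\mathbb{E}|\widetilde X_n^p|^2\) plus the already bounded moments of \(\widetilde Y_n^p\) and \(\widetilde Z_n^p\). Discrete Gronwall again yields a \(\pi\)-independent bound.

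\textbf{Base case \(p=0\).} This is the main obstacle. The reference path is \(\widetilde X_n^0\) itself, so the \(Z\) bound is not available a priori. We instead run a joint recursion on \(e_n:=\mathbb{E}|\widetilde X_n^0|^2+\mathbb{E}|\widetilde Y_n^0|^2\). By~\eqref{eq:error_neural_output_growth_initial}, \(\mathbb{E}\|\widetilde Z_n^0\|^2\le c_1+c_2\mathbb{E}|\widetilde X_n^0|^2\). Substituting this into the two squared recursions gives \(e_{n+1}\le(1+c\Delta t_n)e_n+c'\Delta t_n\), since every \(\widetilde Z_n^0\) term carries at least a factor \(\Delta t_n\) (either from \(\Delta t_n^2\) or from \(\mathbb{E}|\Delta W_n|^2\)). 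The factor \(\Delta t_n\) multiplying \(\mathbb{E}\|Z\|^2\) is exactly what keeps this argument closed. Gronwall gives \(C_0\), and the \(Z\) moment bound follows as in Step 1.
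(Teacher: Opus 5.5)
Your proposal is correct and follows essentially the paper's proof: induction on \(p\), control of \(\widetilde Z^p\) through the neural growth lemma and Jensen's inequality on the running average \(\widetilde{\mathcal X}^{p-1}\), and discrete Gronwall for \(\widetilde X^p,\widetilde Y^p\), with a joint \(X\)--\(Y\) recursion at \(p=0\) that closes because every \(\widetilde Z^0\) term carries a factor \(\Delta t_n\). The differences are minor. You square the one-step recursion and kill the cross term by independence of \(\Delta W_n\), whereas the paper sums the recursion and uses Cauchy--Schwarz with the discrete It\^o isometry, and it also keeps the affine dependence \(D_p=\nu_0+\nu_1\overline D_{p-1}\), which it reuses for the uniform-in-\(p\) bound. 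In addition, your Step 2 coefficient \((1+\Delta t_n)\Delta t_n^2\) should be \((1+\Delta t_n)\Delta t_n\); this does not change the recursion \(a_{n+1}\le(1+c\Delta t_n)a_n+c'\Delta t_n\).
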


\begin{proof}
We proceed by induction over \(p\).
Set
\(
\kappa_0
:=
2K_\phi^2
\left(
L_0+L_\sigma^yK_{\rm nn}
\right)^2,
\)
and recall \(\kappa_1\) from~\eqref{eq:error_uniform_moment_kappa}.
The linear growth estimates~\eqref{eq:error_coefficient_linear_growth} imply
\begin{align}
|b(t,x,y,z)|^2
&\le
4L_0^2
+
4(L_b^x)^2|x|^2
+
4(L_b^y)^2|y|^2
+
4(L_b^z)^2\|z\|^2,
\label{eq:error_b_squared_growth}
\\
\|\sigma(t,x,y)\|^2
&\le
3L_0^2
+
3(L_\sigma^x)^2|x|^2
+
3(L_\sigma^y)^2|y|^2,
\label{eq:error_sigma_squared_growth}
\\
|f(t,x,y,z)|^2
&\le
4L_0^2
+
4(L_f^x)^2|x|^2
+
4(L_f^y)^2|y|^2
+
4(L_f^z)^2\|z\|^2.
\label{eq:error_f_squared_growth}
\end{align}

We first consider \(p=0\).
By~\eqref{eq:error_neural_output_growth_initial}, \(\|\widetilde Z_n^0\|^2 \le \kappa_0+\kappa_1|\widetilde X_n^0|^2\) for \(n=0,\ldots,N-1\).
Summing the forward and backward recursions of the \(p=0\) initialization rollout in Algorithm~\ref{Algo:Algorithm-12} from \(\ell=0\) to \(n-1\), and applying the Cauchy--Schwarz inequality and the discrete It\^o isometry, together with \(|\theta_{u_0}^{\,0}|\le K_{\rm nn}\), yields, for \(n=0,\ldots,N\),
\[
\begin{aligned}
\mathbb{E}|\widetilde X_n^0|^2
&\le
3|x_0|^2
+
3T\sum_{\ell=0}^{n-1}
\mathbb{E}|b(t_\ell,\widetilde X_\ell^0,
\widetilde Y_\ell^0,\widetilde Z_\ell^0)|^2
\Delta t_\ell
+
3\sum_{\ell=0}^{n-1}
\mathbb{E}\|\sigma(t_\ell,\widetilde X_\ell^0,
\widetilde Y_\ell^0)\|^2
\Delta t_\ell,
\\
\mathbb{E}|\widetilde Y_n^0|^2
&\le
3K_{\rm nn}^2
+
3T\sum_{\ell=0}^{n-1}
\mathbb{E}|f(t_\ell,\widetilde X_\ell^0,
\widetilde Y_\ell^0,\widetilde Z_\ell^0)|^2
\Delta t_\ell
+
3\sum_{\ell=0}^{n-1}
\mathbb{E}\|\widetilde Z_\ell^0\|^2
\Delta t_\ell.
\end{aligned}
\]
Using~\eqref{eq:error_b_squared_growth}--\eqref{eq:error_f_squared_growth}, the preceding bound for \(\widetilde Z^0\), and \(\sum_{\ell=0}^{n-1}\Delta t_\ell\le T\), the above estimates imply
\[
\mathbb{E}[
|\widetilde X_n^0|^2+|\widetilde Y_n^0|^2
]
\le
K_0
+
G_0
\sum_{\ell=0}^{n-1}
\mathbb{E}[
|\widetilde X_\ell^0|^2+|\widetilde Y_\ell^0|^2
]\Delta t_\ell,
\]
for some constants \(K_0,G_0>0\), depending only on the coefficient bounds, \(K_{\rm nn}\), \(K_\phi\), \(x_0\), and \(T\), and independent of \(\pi\) and \(n\).
The discrete Gronwall inequality therefore yields a constant \(D_0>0\), independent of \(\pi\) and \(n\), such that
\[
\max_{0\le n\le N}
\mathbb{E}[
|\widetilde X_n^0|^2+|\widetilde Y_n^0|^2
]
\le D_0.
\]
Consequently,
\[
\max_{0\le n\le N-1}
\mathbb{E}\|\widetilde Z_n^0\|^2
+
\sum_{n=0}^{N-1}
\mathbb{E}\|\widetilde Z_n^0\|^2\Delta t_n
\le
(1+T)(\kappa_0+\kappa_1D_0),
\]
so~\eqref{eq:error_numerical_moment_bound} holds for \(p=0\) with
\(
C_0
:=
D_0+(1+T)(\kappa_0+\kappa_1D_0).
\)

Let \(p\ge1\), and suppose that for each \(j=0,\ldots,p-1\), there exist
constants \(D_j,C_j>0\), independent of \(\pi\) and \(n\), such that
\[
\max_{0\le n\le N}
\mathbb{E}[
|\widetilde X_n^j|^2+|\widetilde Y_n^j|^2
]
\le D_j,
\]
and~\eqref{eq:error_numerical_moment_bound} holds with \(p=j\) and constant \(C_j\).
Set
\(
\overline D_{p-1}
:=
\frac1p\sum_{j=0}^{p-1}D_j.
\)
By~\eqref{eq:fictitious_play_X} and
Jensen's inequality,
\begin{equation}
\label{eq:error_reference_moment_step_p}
\max_{0\le n\le N}
\mathbb{E}|\widetilde{\mathcal X}_n^{p-1}|^2
\le
\max_{0\le n\le N}
\frac1p\sum_{j=0}^{p-1}
\mathbb{E}|\widetilde X_n^j|^2
\le
\overline D_{p-1}.
\end{equation}
Combining this estimate with~\eqref{eq:error_neural_output_growth} gives \(\mathbb{E}\|\widetilde Z_n^p\|^2\le\kappa_0+\kappa_1\overline D_{p-1}\) for \(0\le n\le N-1\), and hence
\begin{equation}
\label{eq:error_Z_moment_step_p}
\max_{0\le n\le N-1}
\mathbb{E}\|\widetilde Z_n^p\|^2
+
\sum_{n=0}^{N-1}
\mathbb{E}\|\widetilde Z_n^p\|^2\Delta t_n
\le
(1+T)\left(
\kappa_0+\kappa_1\overline D_{p-1}
\right).
\end{equation}

We next estimate \(\widetilde X^p\) and \(\widetilde Y^p\).
Applying the same estimates as in the case \(p=0\) to~\eqref{eq:fbsde_X_discrete} and~\eqref{eq:fbsde_Y_discrete}, together with~\eqref{eq:error_reference_moment_step_p} and~\eqref{eq:error_Z_moment_step_p}, and the definitions of \(G^{\rm fwd}\) and \(G^{\rm bwd}\) in~\eqref{eq:error_uniform_moment_growth}, yields
\[
\mathbb{E}|\widetilde X_n^p|^2
\le
K_p^{\rm fwd}
+
G^{\rm fwd}
\sum_{\ell=0}^{n-1}
\mathbb{E}[
|\widetilde X_\ell^p|^2+|\widetilde Y_\ell^p|^2
]\Delta t_\ell,
\]
where
\(
K_p^{\rm fwd}
:=
3|x_0|^2
+
12T^2
[
L_0^2
+
(L_b^z)^2
(
\kappa_0+\kappa_1\overline D_{p-1}
)
]
+
9TL_0^2,
\)
and
\[
\mathbb{E}|\widetilde Y_n^p|^2
\le
K_p^{\rm bwd}
+
G^{\rm bwd}
\sum_{\ell=0}^{n-1}
\mathbb{E}|\widetilde Y_\ell^p|^2
\Delta t_\ell,
\]
where
\(
K_p^{\rm bwd}
:=
3K_{\rm nn}^2
+
12T^2
[
L_0^2
+
(L_f^x)^2\overline D_{p-1}
+
(L_f^z)^2
(
\kappa_0+\kappa_1\overline D_{p-1}
)
]
+
3T
(
\kappa_0+\kappa_1\overline D_{p-1}
).
\)
Set
\(
\Xi:=\exp[(G^{\rm fwd}+G^{\rm bwd})T].
\)
Adding these inequalities and applying the discrete Gronwall inequality gives
\[
\max_{0\le n\le N}
\mathbb{E}[
|\widetilde X_n^p|^2+|\widetilde Y_n^p|^2
]
\le
(
K_p^{\rm fwd}+K_p^{\rm bwd}
)\Xi.
\]

We now make the dependence on
\(\overline D_{p-1}\) explicit. 
By the definitions of \(K_p^{\mathrm{fwd}}\) and
\(K_p^{\mathrm{bwd}}\), we have
\(
K_p^{\mathrm{fwd}}+K_p^{\mathrm{bwd}}
=
\mu_0+\mu_1\overline D_{p-1},
\)
where \(\mu_0>0\) is independent of \(p\), \(\pi\), and \(n\), and
\(
\mu_1
:=
12T^2
[
(L_f^x)^2
+
(
(L_b^z)^2+(L_f^z)^2
)\kappa_1
]
+
3T\kappa_1.
\)
Set \(\nu_0:=\Xi\mu_0\).
By~\eqref{eq:error_uniform_moment_constant},
\(\nu_1=\Xi\mu_1\).
Thus, defining
\begin{equation}
\label{eq:error_XY_moment_constant_recursion}
D_p
:=
\nu_0+\nu_1\overline D_{p-1},
\end{equation}
we obtain
\begin{equation}
\label{eq:error_XY_moment_bound_p}
\max_{0\le n\le N}
\mathbb{E}[
|\widetilde X_n^p|^2+|\widetilde Y_n^p|^2
]
\le
D_p.
\end{equation}

Finally, define
\begin{equation}
\label{eq:error_moment_constant_recursion}
C_p
:=
\max\left\{
\overline D_{p-1},\,
D_p
+
(1+T)
\left(
\kappa_0+\kappa_1\overline D_{p-1}
\right)
\right\}.
\end{equation}
Combining~\eqref{eq:error_XY_moment_bound_p} and
\eqref{eq:error_Z_moment_step_p} gives~\eqref{eq:error_numerical_moment_bound}.
Moreover,~\eqref{eq:error_reference_moment_step_p} and
\eqref{eq:error_moment_constant_recursion} give
\[
\max_{0\le n\le N}
\mathbb{E}|\widetilde{\mathcal X}_n^{p-1}|^2
\le
\overline D_{p-1}
\le
C_p,
\]
which proves~\eqref{eq:error_reference_pi_bound}.
The constants \(\kappa_0\), \(\kappa_1\), \(\Xi\), \(\mu_0\), \(\mu_1\),
\(\nu_0\), and \(\nu_1\) are independent of \(p\), \(\pi\), and \(n\).
Since \(D_0,\ldots,D_{p-1}\) are independent of \(\pi\) and \(n\), so are
\(\overline D_{p-1}\), \(D_p\), and \(C_p\).
This completes the induction.
\end{proof}

We now introduce continuous-time extensions of the discrete forward and backward processes.

\begin{lemma}[Continuous-time extension estimate]
\label{lem:error_numerical_extension}
Suppose that Assumptions~\ref{ass:error_coefficients} and \ref{ass:error_uniform_neural_bounds} hold.
For a fixed \(p\ge1\), define \(\bar X^{\pi,p}\) and \(\bar Y^{\pi,p}\) by
\begin{equation}
\label{eq:error_continuous_extension}
\begin{aligned}[b]
\bar X_t^{\pi,p}
&:=
x_0
+
\int_0^t b(\tau(s),X_s^{\pi,p},Y_s^{\pi,p},Z_s^{\pi,p})\,\mathrm{d}s
+
\int_0^t\sigma(\tau(s),X_s^{\pi,p},Y_s^{\pi,p})\,\mathrm{d}W_s,
\\
\bar Y_t^{\pi,p}
&:=
\theta_{u_0}^{\,p-1}
-
\int_0^t f(\tau(s),\mathcal X_s^{\pi,p-1},
Y_s^{\pi,p},Z_s^{\pi,p})\,\mathrm{d}s
+
\int_0^t Z_s^{\pi,p}\,\mathrm{d}W_s.
\end{aligned}
\end{equation}
Then there exist constants \(K_p^{X,{\rm ext}}>0\) and
\(K_p^{Y,{\rm ext}}>0\), independent of \(\pi\), such that
\begin{equation}
\label{eq:error_extension_X_bound}
\sup_{0\le t\le T}
\mathbb{E}|\bar X_t^{\pi,p}-X_t^{\pi,p}|^2
\le
K_p^{X,{\rm ext}}|\pi|,
\end{equation}
and
\begin{equation}
\label{eq:error_extension_Y_bound}
\sup_{0\le t\le T}
\mathbb{E}|\bar Y_t^{\pi,p}-Y_t^{\pi,p}|^2
\le
K_p^{Y,{\rm ext}}|\pi|.
\end{equation}
\end{lemma}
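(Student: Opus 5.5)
Since the integrands in \eqref{eq:error_continuous_extension} are piecewise constant on each $[t_n,t_{n+1})$, the continuous extensions agree with the discrete recursions \eqref{eq:fbsde_X_discrete} and \eqref{eq:fbsde_Y_discrete} at the grid points. Indeed, $\tau(s)=t_n$ and $(X_s^{\pi,p},Y_s^{\pi,p},Z_s^{\pi,p},\mathcal X_s^{\pi,p-1})=(\widetilde X_n^p,\widetilde Y_n^p,\widetilde Z_n^p,\widetilde{\mathcal X}_n^{p-1})$ on that interval, and the stop-gradient operator does not change numerical values. Starting from $\bar X_0^{\pi,p}=x_0=\widetilde X_0^p$ and $\bar Y_0^{\pi,p}=\theta_{u_0}^{\,p-1}=\widetilde Y_0^p$, an induction over $n$ therefore gives $\bar X_{t_n}^{\pi,p}=\widetilde X_n^p$ and $\bar Y_{t_n}^{\pi,p}=\widetilde Y_n^p$ for all $n=0,\dots,N$. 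In particular, both differences vanish at $t=T$.

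Next, fix $t\in[t_n,t_{n+1})$. Then
\[
\bar X_t^{\pi,p}-X_t^{\pi,p}=b(t_n,\widetilde X_n^p,\widetilde Y_n^p,\widetilde Z_n^p)(t-t_n)+\sigma(t_n,\widetilde X_n^p,\widetilde Y_n^p)(W_t-W_{t_n}).
\]
We use $(a+b)^2\le 2a^2+2b^2$, the independence of $W_t-W_{t_n}$ from $\mathcal F_{t_n}$ (all discrete quantities at level $n$ are $\mathcal F_{t_n}$-measurable, since the network parameters are deterministic), and the bound $t-t_n\le|\pi|\le T$. This yields
\[
\mathbb E|\bar X_t^{\pi,p}-X_t^{\pi,p}|^2\le 2|\pi|\bigl(T\,\mathbb E|b(\cdots)|^2+\mathbb E\|\sigma(\cdots)\|^2\bigr).
\]
The squared linear growth bounds \eqref{eq:error_b_squared_growth} and \eqref{eq:error_sigma_squared_growth} reduce the right-hand side to $\mathbb E[|\widetilde X_n^p|^2+|\widetilde Y_n^p|^2+\|\widetilde Z_n^p\|^2]$. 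This quantity is bounded by $C_p$ uniformly in $n$ and $\pi$, by Lemma~\ref{lem:error_numerical_moment}; here the pointwise bound $\max_n\mathbb E\|\widetilde Z_n^p\|^2$ is essential, not just the time-integrated one. This gives \eqref{eq:error_extension_X_bound} with $K_p^{X,{\rm ext}}=2(T+1)\cdot 4(L_0^2+\dots)\,(1+C_p)$, up to explicit coefficient constants.

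The backward component is treated in the same way:
\[
\bar Y_t^{\pi,p}-Y_t^{\pi,p}=-f(t_n,\widetilde{\mathcal X}_n^{p-1},\widetilde Y_n^p,\widetilde Z_n^p)(t-t_n)+\widetilde Z_n^p(W_t-W_{t_n}).
\]
Hence
\[
\mathbb E|\cdot|^2\le 2|\pi|\bigl(T\,\mathbb E|f|^2+\mathbb E\|\widetilde Z_n^p\|^2\bigr).
\]
Using \eqref{eq:error_f_squared_growth}, we now also need a uniform bound on $\mathbb E|\widetilde{\mathcal X}_n^{p-1}|^2$, which is provided by \eqref{eq:error_reference_pi_bound}. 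Together with the moment bound for $\widetilde Y^p$ and $\widetilde Z^p$, this gives \eqref{eq:error_extension_Y_bound}. Taking the supremum over $t$ (including $t=T$, where the difference is zero) finishes the proof.

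The only delicate point is the adaptedness and independence step that justifies the Itô isometry on the subinterval. This step requires that the reference path $\widetilde{\mathcal X}^{p-1}_n$ and $\widetilde Z_n^p$ be $\mathcal F_{t_n}$-measurable. This holds because each is built recursively from Brownian increments up to $t_n$; the averaging \eqref{eq:fictitious_play_X} preserves measurability, and the same Brownian motion is used in every decoupling step. Everything else is routine given Lemma~\ref{lem:error_numerical_moment}.
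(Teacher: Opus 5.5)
Your proof is correct and follows essentially the same route as the paper: you first show the extensions agree with the discrete recursions at the grid points. You then bound the difference on each $[t_n,t_{n+1})$ using $(a+b)^2\le 2a^2+2b^2$, the It\^o isometry, the squared linear-growth estimates, and the moment bounds of Lemma~\ref{lem:error_numerical_moment}, including the pointwise $\widetilde Z$-bound and the reference-path bound; at $t=T$ the difference vanishes. Your explicit check that the reference path is $\mathcal F_{t_n}$-measurable, because the same Brownian batch is reused across decoupling steps, is a point the paper leaves implicit.
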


\begin{proof}
Let \(C_p\) be the constant in
Lemma~\ref{lem:error_numerical_moment}, and set
\begin{equation}
\label{eq:error_extension_integrand_bounds}
\begin{aligned}[b]
K_p^b
&:=
4L_0^2
+
4[(L_b^x)^2+(L_b^y)^2+(L_b^z)^2]C_p,
\\
K_p^\sigma
&:=
3L_0^2
+
3[(L_\sigma^x)^2+(L_\sigma^y)^2]C_p,
\\
K_p^f
&:=
4L_0^2
+
4[(L_f^x)^2+(L_f^y)^2+(L_f^z)^2]C_p.
\end{aligned}
\end{equation}
By~\eqref{eq:error_b_squared_growth}--\eqref{eq:error_f_squared_growth}
and Lemma~\ref{lem:error_numerical_moment}, these constants are independent of \(\pi\), and for \(0\le n\le N-1\),
\[
\mathbb{E}|b(t_n,\widetilde X_n^p,\widetilde Y_n^p,\widetilde Z_n^p)|^2\le K_p^b,\qquad
\mathbb{E}\|\sigma(t_n,\widetilde X_n^p,\widetilde Y_n^p)\|^2\le K_p^\sigma,\qquad
\mathbb{E}|f(t_n,\widetilde{\mathcal X}_n^{p-1},\widetilde Y_n^p,\widetilde Z_n^p)|^2\le K_p^f.
\]

We first consider the forward component.
By~\eqref{eq:error_continuous_extension} and
\eqref{eq:fbsde_X_discrete},
\(
\bar X_{t_n}^{\pi,p}=\widetilde X_n^p
\)
for \(0\le n\le N - 1\).
Hence, for \(t\in[t_n,t_{n+1})\),
\(
X_t^{\pi,p}
=
\widetilde X_n^p
=
\bar X_{t_n}^{\pi,p}.
\)
Using \eqref{eq:error_continuous_extension} on \([t_n,t]\), we get
\[
\bar X_t^{\pi,p}-X_t^{\pi,p}
=
\int_{t_n}^t
b(t_n,\widetilde X_n^p,\widetilde Y_n^p,\widetilde Z_n^p)\,\mathrm{d}s
+
\int_{t_n}^t
\sigma(t_n,\widetilde X_n^p,\widetilde Y_n^p)\,\mathrm{d}W_s.
\]
The Cauchy--Schwarz inequality, It\^o's isometry, and \(0\le t-t_n\le|\pi|\) give
\[
\begin{aligned}
\mathbb{E}|\bar X_t^{\pi,p}-X_t^{\pi,p}|^2
&\le
2(t-t_n)^2
\mathbb{E}|b(t_n,\widetilde X_n^p,
\widetilde Y_n^p,\widetilde Z_n^p)|^2
+
2(t-t_n)
\mathbb{E}\|\sigma(t_n,\widetilde X_n^p,\widetilde Y_n^p)\|^2
\le
\left(
2TK_p^b+2K_p^\sigma
\right)|\pi|.
\end{aligned}
\]
At \(t=T\), summing~\eqref{eq:fbsde_X_discrete} gives
\(
\bar X_T^{\pi,p}=\widetilde X_N^p=X_T^{\pi,p}.
\)
Thus~\eqref{eq:error_extension_X_bound} holds with
\(
K_p^{X,{\rm ext}}
:=
2TK_p^b+2K_p^\sigma.
\)

Similarly, \(Y_t^{\pi,p}=\bar Y_{t_n}^{\pi,p}=\widetilde Y_n^p\) for \(t\in[t_n,t_{n+1})\), and therefore
\[
\mathbb{E}|\bar Y_t^{\pi,p}-Y_t^{\pi,p}|^2
\le
2(t-t_n)^2K_p^f+2(t-t_n)C_p
\le
\left(2TK_p^f+2C_p\right)|\pi|.
\]
Since \(\bar Y_T^{\pi,p}=Y_T^{\pi,p}\),~\eqref{eq:error_extension_Y_bound} follows with \(K_p^{Y,{\rm ext}}:=2TK_p^f+2C_p\).
\end{proof}

The next lemma provides the coefficient-dependent stability constants used in the final error estimate.

\begin{lemma}[Coefficient-dependent stability estimates]
\label{lem:error_coefficient_stability}
Under Assumptions~\ref{ass:error_coefficients} and
\ref{ass:error_wellposedness}, for every \(p\ge1\),
\begin{equation}
\label{eq:error_coefficient_backward_stability}
\sup_{0\le t\le T}\mathbb{E}|Y_t^p-Y_t|^2
+
\mathbb{E}\int_0^T\|Z_t^p-Z_t\|^2\,\mathrm{d}t
\le
K_B
\sup_{0\le t\le T}
\mathbb{E}|\mathcal X_t^{p-1}-X_t|^2,
\end{equation}
and
\begin{equation}
\label{eq:error_coefficient_forward_stability}
\sup_{0\le t\le T}\mathbb{E}|X_t^p-X_t|^2
\le
K_F\bigl(
\sup_{0\le t\le T}\mathbb{E}|Y_t^p-Y_t|^2
+
\mathbb{E}\int_0^T\|Z_t^p-Z_t\|^2\,\mathrm{d}t
\bigr).
\end{equation}
\end{lemma}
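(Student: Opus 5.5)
Both estimates follow from standard a priori arguments for the differences of the auxiliary system~\eqref{eq:fp_step_p} and the original FBSDE~\eqref{eq:fully_coupled_FBSDE}. Write \(\delta X:=X^p-X\), \(\delta Y:=Y^p-Y\), \(\delta Z:=Z^p-Z\) and \(\delta\mathcal X:=\mathcal X^{p-1}-X\). The key point is that in~\eqref{eq:fp_step_p} the backward equation is driven by the frozen path \(\mathcal X^{p-1}\), so the backward difference is a BSDE whose data depend only on \(\delta\mathcal X\). The forward difference is an SDE whose perturbation depends only on \((\delta Y,\delta Z)\). Each inequality is therefore a one-sided stability estimate, and the two are not combined here; combining them is left to the contraction argument under Assumption~\ref{ass:error_reference_contraction}.

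\textbf{Backward estimate~\eqref{eq:error_coefficient_backward_stability}.} The difference satisfies
\(\delta Y_t=g(\mathcal X^{p-1}_T)-g(X_T)+\int_t^T\Delta f_s\,\mathrm ds-\int_t^T\delta Z_s\,\mathrm dW_s\), where
\(\Delta f_s:=f(s,\mathcal X^{p-1}_s,Y^p_s,Z^p_s)-f(s,X_s,Y_s,Z_s)\).

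1. Apply It\^o's formula to \(e^{\beta_f s}|\delta Y_s|^2\) on \([t,T]\) and take expectations. The stochastic integral is a true martingale because the processes lie in \(\mathcal S^2_{\mathbb F}\times\mathcal L^2_{\mathbb F}\) by Assumption~\ref{ass:error_wellposedness}; a BDG argument, or localization followed by Fatou's lemma, justifies this. The result is
\[
\mathbb E\,e^{\beta_f t}|\delta Y_t|^2+\mathbb E\int_t^T e^{\beta_f s}\|\delta Z_s\|^2\,\mathrm ds
=\mathbb E\,e^{\beta_f T}|\delta g|^2+\mathbb E\int_t^T e^{\beta_f s}\bigl(2\delta Y_s\cdot\Delta f_s-\beta_f|\delta Y_s|^2\bigr)\mathrm ds .
\]
2. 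By the Lipschitz bounds~\eqref{eq:error_coeff_spatial_lipschitz} and Young's inequality,
\[
2\delta Y\cdot\Delta f\le\bigl(1+2L_f^y+2(L_f^z)^2\bigr)|\delta Y|^2+(L_f^x)^2|\delta\mathcal X|^2+\tfrac12\|\delta Z\|^2 .
\]
Since \(\beta_f=2L_f^y+2(L_f^z)^2+2\), the \(|\delta Y|^2\) terms are absorbed with slack, and half of the \(\delta Z\) term moves to the left-hand side.
3. Use \(1\le e^{\beta_f s}\le e^{\beta_f T}\) and \(|\delta g|\le L_g|\delta\mathcal X_T|\). This gives, for every \(t\),
\[
\mathbb E|\delta Y_t|^2+\tfrac12\mathbb E\int_t^T\|\delta Z_s\|^2\,\mathrm ds\le e^{\beta_f T}\bigl[L_g^2+T(L_f^x)^2\bigr]\sup_s\mathbb E|\delta\mathcal X_s|^2 .
\]
4. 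Take the supremum over \(t\) for the \(Y\)-part (factor \(1\)) and set \(t=0\), multiplying by \(2\), for the \(Z\)-part (factor \(2\)). Adding the two yields the constant \(3e^{\beta_f T}[L_g^2+T(L_f^x)^2]=K_B\).

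\textbf{Forward estimate~\eqref{eq:error_coefficient_forward_stability}.} Here \(\delta X_t=\int_0^t\Delta b_s\,\mathrm ds+\int_0^t\Delta\sigma_s\,\mathrm dW_s\), with \(\Delta b\) and \(\Delta\sigma\) the differences of the coefficients along the two solutions.

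1. Cauchy--Schwarz and It\^o's isometry give
\(\mathbb E|\delta X_t|^2\le2T\int_0^t\mathbb E|\Delta b_s|^2\,\mathrm ds+2\int_0^t\mathbb E\|\Delta\sigma_s\|^2\,\mathrm ds\).
2. The Lipschitz bounds with \((a+b+c)^2\le3(a^2+b^2+c^2)\) produce coefficients dominated by those in \(\kappa_x,\kappa_y,\kappa_z\). The stated constants, with factors \(12T\) and \(8\), are larger than needed, so any cruder splitting also works. This gives
\[
\mathbb E|\delta X_t|^2\le\kappa_x\int_0^t\mathbb E|\delta X_s|^2\,\mathrm ds+\kappa_yT\sup_s\mathbb E|\delta Y_s|^2+\kappa_z\mathbb E\int_0^T\|\delta Z_s\|^2\,\mathrm ds .
\]
3. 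The function \(t\mapsto\mathbb E|\delta X_t|^2\) is bounded since \(\delta X\in\mathcal S^2_{\mathbb F}\). Gronwall's inequality then gives the bound \(e^{\kappa_xT}\max\{\kappa_yT,\kappa_z\}(\cdots)=K_F(\cdots)\).

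\textbf{Main obstacle.} The only delicate point is the bookkeeping in the backward step. The Young splitting must leave exactly the exponent \(\beta_f\) and the factor \(3\). This works by splitting \(\delta Y\cdot\delta Z\) as \(2(L_f^z)^2|\delta Y|^2+\tfrac12\|\delta Z\|^2\), and \(\delta Y\cdot\delta\mathcal X\) as \(|\delta Y|^2+(L_f^x)^2|\delta\mathcal X|^2\). The rigorous martingale property of the stochastic integral, needed in step~1 of the backward estimate, is routine.
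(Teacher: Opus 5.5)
Your proposal is correct and follows the paper's proof essentially step for step. You use the same It\^o identity for $e^{\beta_f t}|\delta Y_t|^2$, the same Young splitting that leaves slack $1$ in $\beta_f$ and moves $\tfrac12\|\delta Z\|^2$ to the left-hand side, and the same supremum/$t=0$ combination that produces the factor $3$ in $K_B$; for $K_F$ you likewise use Cauchy--Schwarz, It\^o's isometry, and Gronwall, and your justification of the martingale property is a small rigor point the paper leaves implicit.
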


\begin{proof}
Fix \(p\ge1\), and set \(\delta X:=X^p-X\), \(\delta Y:=Y^p-Y\), \(\delta Z:=Z^p-Z\), and \(\delta\mathcal X:=\mathcal X^{p-1}-X\).
The backward difference satisfies
\[
\delta Y_t=g(\mathcal X_T^{p-1})-g(X_T)+\int_t^T\delta f_s\,\mathrm{d}s-\int_t^T\delta Z_s\,\mathrm{d}W_s,
\]
where \(\delta f_s:=f(s,\mathcal X_s^{p-1},Y_s^p,Z_s^p)-f(s,X_s,Y_s,Z_s)\). By~\eqref{eq:error_coeff_spatial_lipschitz},
\begin{equation}
\label{eq:error_coeff_stability_delta_f}
|\delta f_s|\le L_f^x|\delta\mathcal X_s|+L_f^y|\delta Y_s|+L_f^z\|\delta Z_s\|.
\end{equation}

We first estimate \(\delta Y\).
Since
\(
d\delta Y_t=-\delta f_t\,\mathrm{d}t+\delta Z_t\,\mathrm{d}W_t,
\)
It\^o's formula applied to
\(e^{\beta_f t}|\delta Y_t|^2\), followed by integration over
\([t,T]\) and taking expectations, gives
\begin{equation}
\label{eq:error_coeff_backward_ito_identity}
\begin{aligned}[b]
&e^{\beta_f t}\mathbb{E}|\delta Y_t|^2
+
\mathbb{E}\int_t^T e^{\beta_f s}\left(\beta_f|\delta Y_s|^2+\|\delta Z_s\|^2\right)\,\mathrm{d}s
\\
&=
e^{\beta_f T}\mathbb{E}|g(\mathcal X_T^{p-1})-g(X_T)|^2
+
2\mathbb{E}\int_t^T e^{\beta_f s}\langle\delta Y_s,\delta f_s\rangle\,\mathrm{d}s.
\end{aligned}
\end{equation}
By~\eqref{eq:error_coeff_stability_delta_f} and Young's inequality,
\begin{equation}
\label{eq:error_coeff_backward_driver_bound}
\begin{aligned}[b]
2\langle\delta Y_s,\delta f_s\rangle
&\le
2L_f^x|\delta Y_s||\delta\mathcal X_s|
+
2L_f^y|\delta Y_s|^2
+
2L_f^z|\delta Y_s|\|\delta Z_s\|
\\
&\le
[1+2L_f^y+2(L_f^z)^2]|\delta Y_s|^2
+
(L_f^x)^2|\delta\mathcal X_s|^2
+
\frac12\|\delta Z_s\|^2.
\end{aligned}
\end{equation}
Substituting~\eqref{eq:error_coeff_backward_driver_bound} into~\eqref{eq:error_coeff_backward_ito_identity}, and using \(\beta_f=2L_f^y+2(L_f^z)^2+2\) from~\eqref{eq:error_stability_constants}, gives
\begin{equation}
\label{eq:error_coeff_backward_weighted_estimate}
\begin{aligned}[b]
e^{\beta_f t}\mathbb{E}|\delta Y_t|^2
&+
\mathbb{E}\int_t^T e^{\beta_f s}|\delta Y_s|^2\,\mathrm{d}s
+
\frac12\mathbb{E}\int_t^T e^{\beta_f s}\|\delta Z_s\|^2\,\mathrm{d}s
\\
&\le
e^{\beta_f T}\mathbb{E}|g(\mathcal X_T^{p-1})-g(X_T)|^2
+
(L_f^x)^2\mathbb{E}\int_t^T e^{\beta_f s}|\delta\mathcal X_s|^2\,\mathrm{d}s.
\end{aligned}
\end{equation}

By the Lipschitz continuity of \(g\), we have 
\[
\mathbb{E}|g(\mathcal X_T^{p-1})-g(X_T)|^2\le L_g^2\sup_{0\le s\le T}\mathbb{E}|\delta\mathcal X_s|^2
\]
 and 
\[
\mathbb{E}\int_t^T e^{\beta_f s}|\delta\mathcal X_s|^2\,\mathrm{d}s\le Te^{\beta_f T}\sup_{0\le s\le T}\mathbb{E}|\delta\mathcal X_s|^2.
\]
Hence~\eqref{eq:error_coeff_backward_weighted_estimate} implies
\begin{equation}
\label{eq:error_coeff_backward_rhs_bound}
e^{\beta_f t}\mathbb{E}|\delta Y_t|^2
+
\mathbb{E}\int_t^T e^{\beta_f s}|\delta Y_s|^2\,\mathrm{d}s
+
\frac12\mathbb{E}\int_t^T e^{\beta_f s}\|\delta Z_s\|^2\,\mathrm{d}s
\le
e^{\beta_f T}[L_g^2+T(L_f^x)^2]
\sup_{0\le s\le T}\mathbb{E}|\delta\mathcal X_s|^2.
\end{equation}
Dropping the nonnegative integral terms and using
\(e^{\beta_f t}\ge1\) in~\eqref{eq:error_coeff_backward_rhs_bound}, we obtain
\[
\sup_{0\le t\le T}\mathbb{E}|\delta Y_t|^2
\le
e^{\beta_f T}
[
L_g^2+T(L_f^x)^2
]
\sup_{0\le t\le T}
\mathbb{E}|\delta\mathcal X_t|^2.
\]
Taking \(t=0\) in~\eqref{eq:error_coeff_backward_rhs_bound} and using
\(e^{\beta_f s}\ge1\) gives
\[
\mathbb{E}\int_0^T\|\delta Z_t\|^2\,\mathrm{d}t
\le
2e^{\beta_f T}
[
L_g^2+T(L_f^x)^2
]
\sup_{0\le t\le T}
\mathbb{E}|\delta\mathcal X_t|^2.
\]
Adding these inequalities and using the definition of \(K_B\) in \eqref{eq:error_stability_constants} proves \eqref{eq:error_coefficient_backward_stability}.

For the forward component,
\(
\delta X_t=\int_0^t\delta b_s\,\mathrm{d}s+\int_0^t\delta\sigma_s\,\mathrm{d}W_s,
\)
where \(\delta b_s:=b(s,X_s^p,Y_s^p,Z_s^p)-b(s,X_s,Y_s,Z_s)\) and \(\delta\sigma_s:=\sigma(s,X_s^p,Y_s^p)-\sigma(s,X_s,Y_s)\). By~\eqref{eq:error_coeff_spatial_lipschitz},
\begin{equation}
\label{eq:error_coeff_forward_integrand_bounds}
\begin{aligned}[b]
|\delta b_s|^2
&\le
3(L_b^x)^2|\delta X_s|^2
+
3(L_b^y)^2|\delta Y_s|^2
+
3(L_b^z)^2\|\delta Z_s\|^2,
\\
\|\delta\sigma_s\|^2
&\le
2(L_\sigma^x)^2|\delta X_s|^2
+
2(L_\sigma^y)^2|\delta Y_s|^2.
\end{aligned}
\end{equation}
The Cauchy--Schwarz inequality and It\^o's isometry give
\begin{equation}
\label{eq:error_coeff_forward_basic_estimate}
\mathbb{E}|\delta X_t|^2
\le
2T\int_0^t\mathbb{E}|\delta b_s|^2\,\mathrm{d}s
+
2\int_0^t\mathbb{E}\|\delta\sigma_s\|^2\,\mathrm{d}s.
\end{equation}
Substituting~\eqref{eq:error_coeff_forward_integrand_bounds} into~\eqref{eq:error_coeff_forward_basic_estimate} and using the definitions of \(\kappa_x,\kappa_y,\kappa_z\) in~\eqref{eq:error_stability_constants} yields
\[
\begin{aligned}
\mathbb{E}|\delta X_t|^2
\le&
\kappa_x\int_0^t\mathbb{E}|\delta X_s|^2\,\mathrm{d}s
+
\kappa_y\int_0^t\mathbb{E}|\delta Y_s|^2\,\mathrm{d}s
+
\kappa_z\int_0^t\mathbb{E}\|\delta Z_s\|^2\,\mathrm{d}s.
\end{aligned}
\]
Since \(\int_0^t\mathbb{E}|\delta Y_s|^2\,\mathrm{d}s\le T\sup_{0\le s\le T}\mathbb{E}|\delta Y_s|^2\) and \(\int_0^t\mathbb{E}\|\delta Z_s\|^2\,\mathrm{d}s\le\mathbb{E}\int_0^T\|\delta Z_s\|^2\,\mathrm{d}s\), we obtain
\[
\mathbb{E}|\delta X_t|^2
\le
\kappa_x\int_0^t\mathbb{E}|\delta X_s|^2\,\mathrm{d}s
+
\max\{\kappa_yT,\kappa_z\}
\bigl(
\sup_{0\le s\le T}\mathbb{E}|\delta Y_s|^2
+
\mathbb{E}\int_0^T\|\delta Z_s\|^2\,\mathrm{d}s
\bigr).
\]
Gronwall's inequality therefore yields
\[
\sup_{0\le t\le T}\mathbb{E}|\delta X_t|^2
\le
e^{\kappa_xT}
\max\{\kappa_yT,\kappa_z\}
\bigl(
\sup_{0\le t\le T}\mathbb{E}|\delta Y_t|^2
+
\mathbb{E}\int_0^T\|\delta Z_t\|^2\,\mathrm{d}t
\bigr).
\]
Using the definition of \(K_F\) in \eqref{eq:error_stability_constants} proves \eqref{eq:error_coefficient_forward_stability}.
\end{proof}

The refined estimate requires a recursion for the discrepancy between the discrete and continuous reference paths.

\begin{lemma}[Reference-path recursion]
\label{lem:error_reference_path_recursion}
For every \(p\ge1\),
\begin{equation}
\label{eq:error_reference_path_recursion}
\mathcal R_{\rm path}^{\pi,p}
\le
\frac1p
\sum_{j=0}^{p-1}\mathfrak X_j^\pi.
\end{equation}
\end{lemma}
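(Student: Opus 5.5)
Both reference paths are running averages of the same kind, so the discrepancy between them is an average of per-iterate discrepancies, and Jensen's inequality converts the squared norm of that average into an average of squared norms.

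\textbf{Step 1: closed forms for the two reference paths.} Unrolling the continuous rule~\eqref{eq:fp_average_continuous} gives, for $p\ge1$,
\[
\mathcal X_t^{p-1}=\frac1p\Bigl(\mathcal X_t^0+\sum_{j=1}^{p-1}X_t^j\Bigr).
\]
The same unrolling of the discrete rule~\eqref{eq:fictitious_play_X} gives
\[
\widetilde{\mathcal X}_n^{p-1}=\frac1p\Bigl(\widetilde{\mathcal X}_n^0+\sum_{j=1}^{p-1}\widetilde X_n^j\Bigr).
\]
Recall that $\widetilde{\mathcal X}^0=\widetilde X^0$ by construction. Passing to the piecewise-constant interpolation, for every $t\in[0,T]$ (including $t=T$),
\[
\mathcal X_t^{\pi,p-1}=\frac1p\Bigl(X_t^{\pi,0}+\sum_{j=1}^{p-1}X_t^{\pi,j}\Bigr).
\]

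\textbf{Step 2: Jensen's inequality.} Subtracting the two closed forms gives
\[
\mathcal X_t^{\pi,p-1}-\mathcal X_t^{p-1}
=\frac1p\Bigl[(X_t^{\pi,0}-\mathcal X_t^0)+\sum_{j=1}^{p-1}(X_t^{\pi,j}-X_t^j)\Bigr].
\]
Convexity of $|\cdot|^2$ (Jensen with equal weights $1/p$) yields
\[
|\mathcal X_t^{\pi,p-1}-\mathcal X_t^{p-1}|^2\le\frac1p\Bigl(|X_t^{\pi,0}-\mathcal X_t^0|^2+\sum_{j=1}^{p-1}|X_t^{\pi,j}-X_t^j|^2\Bigr).
\]

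\textbf{Step 3: take expectations and the supremum.} Taking expectations and then $\sup_{0\le t\le T}$, I bound the supremum of the sum by the sum of the suprema. By definition, $\sup_t\mathbb E|X_t^{\pi,0}-\mathcal X_t^0|^2=\mathfrak X_0^\pi$ and $\sup_t\mathbb E|X_t^{\pi,j}-X_t^j|^2=\mathfrak X_j^\pi$ for $j\ge1$. This gives~\eqref{eq:error_reference_path_recursion}. For $p=1$ the statement reduces to $\mathcal R_{\rm path}^{\pi,1}=\mathfrak X_0^\pi$, which holds directly from the definitions.

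\textbf{Expected obstacle.} The argument is purely algebraic; the only delicate point is bookkeeping. I must check that the discrete averaging is applied with the same index weights as the continuous rule, that the same Brownian paths are used throughout, and that the $p=0$ term is matched correctly. That term pairs $\mathcal X^0$ with $X^{\pi,0}$ rather than with any $X^0$, and this is exactly why $\mathfrak X_0^\pi$ was defined separately. All the quantities involved are finite by Assumption~\ref{ass:error_wellposedness} and Lemma~\ref{lem:error_numerical_moment}.
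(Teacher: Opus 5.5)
Your proposal is correct and follows essentially the same route as the paper: both unroll the continuous and discrete running averages into equal-weight means over $\mathcal X^0, X^1,\dots,X^{p-1}$ (respectively their interpolated discrete counterparts), apply Jensen's inequality, and take $\sup_{0\le t\le T}$ term by term. The only cosmetic difference is that you apply convexity pathwise before taking expectations, whereas the paper states it directly for the expectations; this is equivalent.
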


\begin{proof}
Iterating the continuous running-average rule~\eqref{eq:fp_average_continuous}, we obtain
\(
\mathcal X_t^{p-1}
=
\frac1p
(
\mathcal X_t^0
+
\sum_{j=1}^{p-1}X_t^j
).
\)
Similarly, the discrete running-average rule~\eqref{eq:fictitious_play_X} gives
\(
\mathcal X_t^{\pi,p-1}
=
\frac1p
(
\mathcal X_t^{\pi,0}
+
\sum_{j=1}^{p-1}X_t^{\pi,j}
).
\)
Hence, Jensen's inequality yields
\[
\mathbb{E}
|
\mathcal X_t^{\pi,p-1}
-
\mathcal X_t^{p-1}
|^2
\le
\frac1p
(
\mathbb{E}
|
\mathcal X_t^{\pi,0}
-
\mathcal X_t^0
|^2
+
\sum_{j=1}^{p-1}
\mathbb{E}
|
X_t^{\pi,j}-X_t^j
|^2
).
\]
Taking the supremum over \(t\in[0,T]\) proves~\eqref{eq:error_reference_path_recursion}.
\end{proof}

We are now ready to prove the error estimate.

\begin{proof}[Proof of Theorem~\ref{thm:error_estimate}]
Fix \(p\ge1\), a partition \(\pi\), and \(\eta>0\).
Let \(\bar X^{\pi,p}\) and \(\bar Y^{\pi,p}\) be the continuous-time
extensions introduced in Lemma~\ref{lem:error_numerical_extension}, and set
\(
\delta X:=\bar X^{\pi,p}-X^p
\),
\(
\delta Y:=\bar Y^{\pi,p}-Y^p
\), and
\(
\delta Z:=Z^{\pi,p}-Z^p
\).
All constants introduced below are independent of \(\pi\).

We first estimate the backward component.
Since \(Y_T^p=g(\mathcal X_T^{p-1})\) and
\(\bar Y_T^{\pi,p}=Y_T^{\pi,p}\), we have
\[
\delta Y_T
=
Y_T^{\pi,p}-g(X_T^{\pi,p})
+
g(X_T^{\pi,p})-g(\mathcal X_T^{\pi,p-1})
+
g(\mathcal X_T^{\pi,p-1})-g(\mathcal X_T^{p-1}).
\]
Grouping the first two terms and using
\(
|a+b|^2\le(1+\eta^{-1})|a|^2+(1+\eta)|b|^2
\),
we obtain
\[
\mathbb{E}|\delta Y_T|^2
\le
(1+\eta^{-1})
\mathbb{E}|
Y_T^{\pi,p}-g(X_T^{\pi,p})
+
g(X_T^{\pi,p})-g(\mathcal X_T^{\pi,p-1})
|^2
+
(1+\eta)
\mathbb{E}|
g(\mathcal X_T^{\pi,p-1})-g(\mathcal X_T^{p-1})
|^2.
\]
Using the Lipschitz continuity of \(g\), and~\eqref{eq:error_training_loss}--\eqref{eq:error_reference_path}, we obtain
\begin{equation}
\label{eq:error_terminal_deltaY_bound}
\mathbb{E}|\delta Y_T|^2
\le
K_\eta^{\rm term}
\left(
\mathcal L_{T}^{\pi,p}
+
\mathcal R_{\rm term}^{\pi,p}
\right)
+
(1+\eta)L_g^2\mathcal R_{\rm path}^{\pi,p},
\end{equation}
where
\(
K_\eta^{\rm term}:=2(1+\eta^{-1})
\).

For \(0\le t\le T\), the backward difference satisfies
\[
\delta Y_t
=
\delta Y_T
+
\int_t^T\delta f_s^{\pi,p}\,\mathrm{d}s
-
\int_t^T\delta Z_s\,\mathrm{d}W_s,
\]
where
\(
\delta f_s^{\pi,p}
:=
f(\tau(s),\mathcal X_s^{\pi,p-1},Y_s^{\pi,p},Z_s^{\pi,p})
-
f(s,\mathcal X_s^{p-1},Y_s^p,Z_s^p).
\)
We decompose
\(
\delta f_s^{\pi,p}
=
\rho_{f,{\rm loc},s}^{\pi,p}
+
\rho_{f,{\rm path},s}^{\pi,p}
+
\widehat{\delta f}_s^{\pi,p},
\)
where
\[
\begin{aligned}
\rho_{f,{\rm loc},s}^{\pi,p}
&:=
f(\tau(s),\mathcal X_s^{\pi,p-1},Y_s^{\pi,p},Z_s^{\pi,p})
-
f(s,\mathcal X_s^{\pi,p-1},\bar Y_s^{\pi,p},Z_s^{\pi,p}),
\\
\rho_{f,{\rm path},s}^{\pi,p}
&:=
f(s,\mathcal X_s^{\pi,p-1},\bar Y_s^{\pi,p},Z_s^{\pi,p})
-
f(s,\mathcal X_s^{p-1},\bar Y_s^{\pi,p},Z_s^{\pi,p}),
\\
\widehat{\delta f}_s^{\pi,p}
&:=
f(s,\mathcal X_s^{p-1},\bar Y_s^{\pi,p},Z_s^{\pi,p})
-
f(s,\mathcal X_s^{p-1},Y_s^p,Z_s^p).
\end{aligned}
\]

To estimate \(\rho_{f,{\rm loc}}^{\pi,p}\), we add and subtract
\(f(s,\mathcal X_s^{\pi,p-1},Y_s^{\pi,p},Z_s^{\pi,p})\).
Assumption~\ref{ass:error_coefficients},
together with \(|s-\tau(s)|\le|\pi|\), gives
\[
\begin{aligned}
|\rho_{f,{\rm loc},s}^{\pi,p}|^2
\le{}&
8(L_f^t)^2
\left(
1+|\mathcal X_s^{\pi,p-1}|^2
+|Y_s^{\pi,p}|^2
+\|Z_s^{\pi,p}\|^2
\right)|\pi|
+
2(L_f^y)^2
|Y_s^{\pi,p}-\bar Y_s^{\pi,p}|^2,
\\
|\rho_{f,{\rm path},s}^{\pi,p}|^2
\le{}&
(L_f^x)^2
|\mathcal X_s^{\pi,p-1}-\mathcal X_s^{p-1}|^2.
\end{aligned}
\]
By Lemma~\ref{lem:error_numerical_moment},
\[
\mathbb{E}\int_0^T
\left(
1+|\mathcal X_s^{\pi,p-1}|^2
+|Y_s^{\pi,p}|^2
+\|Z_s^{\pi,p}\|^2
\right)\,\mathrm{d}s
\le
T+(2T+1)C_p,
\]
while~\eqref{eq:error_extension_Y_bound} gives
\(
\int_0^T
\mathbb{E}|Y_s^{\pi,p}-\bar Y_s^{\pi,p}|^2\,\mathrm{d}s
\le
TK_p^{Y,{\rm ext}}|\pi|
\).
Therefore,
\begin{equation}
\label{eq:error_rho_component_bounds}
\mathbb{E}\int_0^T|\rho_{f,{\rm loc},s}^{\pi,p}|^2\,\mathrm{d}s
\le
K_p^{f,{\rm loc}}|\pi|,
\qquad
\mathbb{E}\int_0^T|\rho_{f,{\rm path},s}^{\pi,p}|^2\,\mathrm{d}s
\le
T(L_f^x)^2\mathcal R_{\rm path}^{\pi,p},
\end{equation}
where
\(
K_p^{f,{\rm loc}}
:=
8(L_f^t)^2[T+(2T+1)C_p]
+
2T(L_f^y)^2K_p^{Y,{\rm ext}}.
\)
Set
\(
\rho_{f,s}^{\pi,p}
:=
\rho_{f,{\rm loc},s}^{\pi,p}
+
\rho_{f,{\rm path},s}^{\pi,p}
\).
Young's inequality and~\eqref{eq:error_rho_component_bounds} yield
\begin{equation}
\label{eq:error_rho_bound}
\mathbb{E}\int_0^T|\rho_{f,s}^{\pi,p}|^2\,\mathrm{d}s
\le
K_{p,\eta}^{f,\rho}|\pi|
+
(1+\eta)T(L_f^x)^2\mathcal R_{\rm path}^{\pi,p},
\end{equation}
where
\(
K_{p,\eta}^{f,\rho}
:=
(1+\eta^{-1})K_p^{f,{\rm loc}}
\).

The backward difference can now be written as
\[
\delta Y_t
=
\delta Y_T
+
\int_t^T
(
\widehat{\delta f}_s^{\pi,p}
+
\rho_{f,s}^{\pi,p}
)\,\mathrm{d}s
-
\int_t^T\delta Z_s\,\mathrm{d}W_s.
\]
By~\eqref{eq:error_coeff_spatial_lipschitz},
\(
|\widehat{\delta f}_s^{\pi,p}|
\le
L_f^y|\delta Y_s|+L_f^z\|\delta Z_s\|
\).
Applying It\^o's formula to \(e^{\beta_f t}|\delta Y_t|^2\),
integrating over \([t,T]\), and taking expectations gives
\begin{equation}
\label{eq:error_backward_forced_ito_identity}
e^{\beta_f t}\mathbb{E}|\delta Y_t|^2
+
\mathbb{E}\int_t^T e^{\beta_f s}
\left(
\beta_f|\delta Y_s|^2+\|\delta Z_s\|^2
\right)\,\mathrm{d}s
=
e^{\beta_f T}\mathbb{E}|\delta Y_T|^2
+
2\mathbb{E}\int_t^T e^{\beta_f s}
\langle
\delta Y_s,
\widehat{\delta f}_s^{\pi,p}
+
\rho_{f,s}^{\pi,p}
\rangle\,\mathrm{d}s.
\end{equation}
The Lipschitz estimate above and Young's inequality imply
\[
\begin{aligned}
2\langle
\delta Y_s,
\widehat{\delta f}_s^{\pi,p}
+
\rho_{f,s}^{\pi,p}
\rangle
&\le
2L_f^y|\delta Y_s|^2
+
2L_f^z|\delta Y_s|\|\delta Z_s\|
+
2|\delta Y_s||\rho_{f,s}^{\pi,p}|
\\
&\le
[
2L_f^y+2(L_f^z)^2+1
]|\delta Y_s|^2
+
\frac12\|\delta Z_s\|^2
+
|\rho_{f,s}^{\pi,p}|^2.
\end{aligned}
\]
Substituting this bound into
\eqref{eq:error_backward_forced_ito_identity} and using
\(
\beta_f=2L_f^y+2(L_f^z)^2+2
\)
gives
\begin{equation}
\label{eq:error_backward_forced_weighted_estimate}
e^{\beta_f t}\mathbb{E}|\delta Y_t|^2
+
\mathbb{E}\int_t^T e^{\beta_f s}|\delta Y_s|^2\,\mathrm{d}s
+
\frac12
\mathbb{E}\int_t^T e^{\beta_f s}\|\delta Z_s\|^2\,\mathrm{d}s
\le
e^{\beta_f T}\mathbb{E}|\delta Y_T|^2
+
\mathbb{E}\int_t^T e^{\beta_f s}
|\rho_{f,s}^{\pi,p}|^2\,\mathrm{d}s.
\end{equation}
Taking the supremum in the first term, setting \(t=0\) for the
\(Z\)-term, and using
\(1\le e^{\beta_f s}\le e^{\beta_f T}\), we obtain
\begin{equation}
\label{eq:error_backward_stability_bound}
\sup_{0\le t\le T}\mathbb{E}|\delta Y_t|^2
+
\mathbb{E}\int_0^T\|\delta Z_t\|^2\,\mathrm{d}t
\le
3e^{\beta_f T}
(
\mathbb{E}|\delta Y_T|^2
+
\mathbb{E}\int_0^T|\rho_{f,t}^{\pi,p}|^2\,\mathrm{d}t
).
\end{equation}
Combining~\eqref{eq:error_terminal_deltaY_bound}, \eqref{eq:error_rho_bound}, and \eqref{eq:error_backward_stability_bound}, and recalling \(K_B=3e^{\beta_fT}[L_g^2+T(L_f^x)^2]\), gives
\begin{equation}
\label{eq:error_backward_reference_bound}
\sup_{0\le t\le T}
\mathbb{E}|\bar Y_t^{\pi,p}-Y_t^p|^2
+
\mathbb{E}\int_0^T
\|Z_t^{\pi,p}-Z_t^p\|^2\,\mathrm{d}t
\le
A_{p,\eta}^{\rm bwd}
\left(
|\pi|
+
\mathcal L_{T}^{\pi,p}
+
\mathcal R_{\rm term}^{\pi,p}
\right)
+
(1+\eta)K_B\mathcal R_{\rm path}^{\pi,p},
\end{equation}
where
\(
A_{p,\eta}^{\rm bwd}
:=
3e^{\beta_fT}
\max\left\{
K_\eta^{\rm term},
K_{p,\eta}^{f,\rho}
\right\}.
\)

We next estimate the forward component.
By~\eqref{eq:error_continuous_extension},
\begin{equation}
\label{eq:error_forward_difference_equation}
\delta X_t
=
\int_0^t\delta b_s^{\pi,p}\,\mathrm{d}s
+
\int_0^t\delta\sigma_s^{\pi,p}\,\mathrm{d}W_s,
\end{equation}
where
\[
\begin{aligned}
\delta b_s^{\pi,p}
&:=
b(\tau(s),X_s^{\pi,p},Y_s^{\pi,p},Z_s^{\pi,p})
-
b(s,X_s^p,Y_s^p,Z_s^p),
\\
\delta\sigma_s^{\pi,p}
&:=
\sigma(\tau(s),X_s^{\pi,p},Y_s^{\pi,p})
-
\sigma(s,X_s^p,Y_s^p).
\end{aligned}
\]
Decompose \(\delta b_s^{\pi,p}=\rho_{b,s}^{\pi,p}+\widehat{\delta b}_s^{\pi,p}\) and \(\delta\sigma_s^{\pi,p}=\rho_{\sigma,s}^{\pi,p}+\widehat{\delta\sigma}_s^{\pi,p}\),
where
\[
\begin{aligned}
\rho_{b,s}^{\pi,p}
&:=
b(\tau(s),X_s^{\pi,p},Y_s^{\pi,p},Z_s^{\pi,p})
-
b(s,\bar X_s^{\pi,p},\bar Y_s^{\pi,p},Z_s^{\pi,p}),
\\
\rho_{\sigma,s}^{\pi,p}
&:=
\sigma(\tau(s),X_s^{\pi,p},Y_s^{\pi,p})
-
\sigma(s,\bar X_s^{\pi,p},\bar Y_s^{\pi,p}),
\\
\widehat{\delta b}_s^{\pi,p}
&:=
b(s,\bar X_s^{\pi,p},\bar Y_s^{\pi,p},Z_s^{\pi,p})
-
b(s,X_s^p,Y_s^p,Z_s^p),
\\
\widehat{\delta\sigma}_s^{\pi,p}
&:=
\sigma(s,\bar X_s^{\pi,p},\bar Y_s^{\pi,p})
-
\sigma(s,X_s^p,Y_s^p).
\end{aligned}
\]

We first estimate \(\rho_b^{\pi,p}\) and \(\rho_\sigma^{\pi,p}\).
Adding and subtracting \(b(s,X_s^{\pi,p},Y_s^{\pi,p},Z_s^{\pi,p})\) and \(\sigma(s,X_s^{\pi,p},Y_s^{\pi,p})\), respectively, and applying Assumption~\ref{ass:error_coefficients}, we obtain
\[
\begin{aligned}
|\rho_{b,s}^{\pi,p}|^2
&\le
12(L_b^t)^2
\left(
1+|X_s^{\pi,p}|^2+|Y_s^{\pi,p}|^2+\|Z_s^{\pi,p}\|^2
\right)|\pi|
+
3(L_b^x)^2|X_s^{\pi,p}-\bar X_s^{\pi,p}|^2
+
3(L_b^y)^2|Y_s^{\pi,p}-\bar Y_s^{\pi,p}|^2,
\\
\|\rho_{\sigma,s}^{\pi,p}\|^2
&\le
9(L_\sigma^t)^2
\left(
1+|X_s^{\pi,p}|^2+|Y_s^{\pi,p}|^2
\right)|\pi|
+
3(L_\sigma^x)^2|X_s^{\pi,p}-\bar X_s^{\pi,p}|^2
+
3(L_\sigma^y)^2|Y_s^{\pi,p}-\bar Y_s^{\pi,p}|^2.
\end{aligned}
\]
By Lemma~\ref{lem:error_numerical_moment} and Lemma~\ref{lem:error_numerical_extension}, we obtain
\begin{equation}
\label{eq:error_forward_residual_bound}
\mathbb{E}\int_0^T
(
|\rho_{b,s}^{\pi,p}|^2
+
\|\rho_{\sigma,s}^{\pi,p}\|^2
)\,\mathrm{d}s
\le
K_p^{\rm res}|\pi|,
\end{equation}
where
\[
\begin{aligned}
K_p^{\rm res}
:={}&
12(L_b^t)^2[T+(2T+1)C_p]
+
3T[(L_b^x)^2K_p^{X,{\rm ext}}+(L_b^y)^2K_p^{Y,{\rm ext}}]
\\
&+
9T(L_\sigma^t)^2(1+2C_p)
+
3T[(L_\sigma^x)^2K_p^{X,{\rm ext}}+(L_\sigma^y)^2K_p^{Y,{\rm ext}}].
\end{aligned}
\]

By~\eqref{eq:error_forward_difference_equation}, the Cauchy--Schwarz inequality, and It\^o's isometry, we obtain
\[
\begin{aligned}
\mathbb{E}|\delta X_t|^2
&\le
2T\int_0^t
\mathbb{E}
\bigl|
\widehat{\delta b}_s^{\pi,p}
+
\rho_{b,s}^{\pi,p}
\bigr|^2\,\mathrm{d}s
+
2\int_0^t
\mathbb{E}
\bigl\|
\widehat{\delta\sigma}_s^{\pi,p}
+
\rho_{\sigma,s}^{\pi,p}
\bigr\|^2\,\mathrm{d}s
\\
&\le
4T\int_0^t
\mathbb{E}|\widehat{\delta b}_s^{\pi,p}|^2\,\mathrm{d}s
+
4\int_0^t
\mathbb{E}\|\widehat{\delta\sigma}_s^{\pi,p}\|^2\,\mathrm{d}s
+
4\max\{T,1\}
\mathbb{E}\int_0^t
\bigl(
|\rho_{b,s}^{\pi,p}|^2
+
\|\rho_{\sigma,s}^{\pi,p}\|^2
\bigr)\,\mathrm{d}s.
\end{aligned}
\]
Using the Lipschitz bounds for \(\widehat{\delta b}^{\pi,p}\) and \(\widehat{\delta\sigma}^{\pi,p}\), ~\eqref{eq:error_forward_residual_bound}, and the definitions of \(\kappa_x,\kappa_y,\kappa_z\) in~\eqref{eq:error_stability_constants}, it follows that
\begin{equation}
\label{eq:error_forward_pre_gronwall_main}
\mathbb{E}|\delta X_t|^2
\le
\kappa_x\int_0^t\mathbb{E}|\delta X_s|^2\,\mathrm{d}s
+
\kappa_y\int_0^t\mathbb{E}|\delta Y_s|^2\,\mathrm{d}s
+
\kappa_z\int_0^t\mathbb{E}\|\delta Z_s\|^2\,\mathrm{d}s
+
K_p^{\rm fwd,res}|\pi|,
\end{equation}
where
\(
K_p^{\rm fwd,res}
:=
4\max\{T,1\}K_p^{\rm res}
\).
Since
\[
\int_0^t\mathbb{E}|\delta Y_s|^2\,\mathrm{d}s
\le
T\sup_{0\le s\le T}\mathbb{E}|\delta Y_s|^2,
\qquad
\int_0^t\mathbb{E}\|\delta Z_s\|^2\,\mathrm{d}s
\le
\mathbb{E}\int_0^T\|\delta Z_s\|^2\,\mathrm{d}s,
\]
Eq.~\eqref{eq:error_forward_pre_gronwall_main} implies
\[
\mathbb{E}|\delta X_t|^2
\le
\kappa_x\int_0^t\mathbb{E}|\delta X_s|^2\,\mathrm{d}s
+
\max\{\kappa_yT,\kappa_z\}
\bigl(
\sup_{0\le s\le T}\mathbb{E}|\delta Y_s|^2
+
\mathbb{E}\int_0^T\|\delta Z_s\|^2\,\mathrm{d}s
\bigr)
+
K_p^{\rm fwd,res}|\pi|.
\]
Gronwall's inequality and the definition of \(K_F\) therefore give
\[
\sup_{0\le t\le T}\mathbb{E}|\delta X_t|^2
\le
K_F
\bigl(
\sup_{0\le t\le T}\mathbb{E}|\delta Y_t|^2
+
\mathbb{E}\int_0^T\|\delta Z_t\|^2\,\mathrm{d}t
\bigr)
+
e^{\kappa_xT}K_p^{\rm fwd,res}|\pi|.
\]
Substituting~\eqref{eq:error_backward_reference_bound} yields
\begin{equation}
\label{eq:error_forward_reference_bound}
\sup_{0\le t\le T}
\mathbb{E}|\bar X_t^{\pi,p}-X_t^p|^2
\le
A_{p,\eta}^{\rm fwd}
\left(
|\pi|
+
\mathcal L_{T}^{\pi,p}
+
\mathcal R_{\rm term}^{\pi,p}
\right)
+
(1+\eta)K_BK_F\mathcal R_{\rm path}^{\pi,p},
\end{equation}
where
\(
A_{p,\eta}^{\rm fwd}
:=
e^{\kappa_xT}K_p^{\rm fwd,res}
+
K_FA_{p,\eta}^{\rm bwd}.
\)

We now return from the continuous extensions to the piecewise-constant processes.
Since
\(
Y^{\pi,p}-Y^p
=
(Y^{\pi,p}-\bar Y^{\pi,p})
+
(\bar Y^{\pi,p}-Y^p)
\),
Young's inequality and \(1\le1+\eta\) give
\[
\mathfrak B_p^\pi
\le
(1+\eta^{-1})
\sup_{0\le t\le T}
\mathbb{E}|Y_t^{\pi,p}-\bar Y_t^{\pi,p}|^2
+
(1+\eta)
\bigl(
\sup_{0\le t\le T}
\mathbb{E}|\bar Y_t^{\pi,p}-Y_t^p|^2
+
\mathbb{E}\int_0^T
\|Z_t^{\pi,p}-Z_t^p\|^2\,\mathrm{d}t
\bigr).
\]
Applying~\eqref{eq:error_extension_Y_bound} and
\eqref{eq:error_backward_reference_bound}, we obtain
\begin{equation}
\label{eq:error_backward_piecewise_bound}
\mathfrak B_p^\pi
\le
\widetilde A_{p,\eta}^{\rm bwd}
\left(
|\pi|
+
\mathcal L_{T}^{\pi,p}
+
\mathcal R_{\rm term}^{\pi,p}
\right)
+
(1+\eta)^2K_B\mathcal R_{\rm path}^{\pi,p},
\end{equation}
where
\(
\widetilde A_{p,\eta}^{\rm bwd}
:=
(1+\eta)A_{p,\eta}^{\rm bwd}
+
(1+\eta^{-1})K_p^{Y,{\rm ext}}
\).

Similarly,
\(
X^{\pi,p}-X^p
=
(X^{\pi,p}-\bar X^{\pi,p})
+
(\bar X^{\pi,p}-X^p)
\).
Using Young's inequality,
\eqref{eq:error_extension_X_bound}, and
\eqref{eq:error_forward_reference_bound}, we obtain
\begin{equation}
\label{eq:error_forward_piecewise_bound}
\mathfrak X_p^\pi
\le
\widetilde A_{p,\eta}^{\rm fwd}
\left(
|\pi|
+
\mathcal L_{T}^{\pi,p}
+
\mathcal R_{\rm term}^{\pi,p}
\right)
+
(1+\eta)^2K_BK_F\mathcal R_{\rm path}^{\pi,p},
\end{equation}
where
\(
\widetilde A_{p,\eta}^{\rm fwd}
:=
(1+\eta)A_{p,\eta}^{\rm fwd}
+
(1+\eta^{-1})K_p^{X,{\rm ext}}
\).
Since
\(
\mathcal D_p^\pi
\le
\mathfrak X_p^\pi+\mathfrak B_p^\pi
\),
adding~\eqref{eq:error_backward_piecewise_bound} and
\eqref{eq:error_forward_piecewise_bound} gives
\begin{equation}
\label{eq:error_numerical_reference_bound}
\mathcal D_p^\pi
\le
A_{p,\eta}^{\rm num}
\left(
|\pi|
+
\mathcal L_{T}^{\pi,p}
+
\mathcal R_{\rm term}^{\pi,p}
\right)
+
(1+\eta)^2K_B(1+K_F)\mathcal R_{\rm path}^{\pi,p},
\end{equation}
where
\(
A_{p,\eta}^{\rm num}
:=
\widetilde A_{p,\eta}^{\rm fwd}
+
\widetilde A_{p,\eta}^{\rm bwd}
\).

By the definitions~\eqref{eq:error_decoupling_error} and \eqref{eq:error_reference_numerical_error},
\begin{equation}
\label{eq:error_total_reference_decomposition}
\|(X^{\pi,p}-X,Y^{\pi,p}-Y,Z^{\pi,p}-Z)\|_{\mathcal E}^2
\le
2\mathcal D_p^\pi
+
2\mathcal E_{dec}^p.
\end{equation}
Combining~\eqref{eq:error_numerical_reference_bound} and \eqref{eq:error_total_reference_decomposition}, we obtain
\begin{equation}
\label{eq:error_parameterized_total_bound}
\begin{aligned}[b]
\|(X^{\pi,p}-X,Y^{\pi,p}-Y,Z^{\pi,p}-Z)\|_{\mathcal E}^2
&\le
2\mathcal E_{dec}^p
+
2A_{p,\eta}^{\rm num}
\left(
|\pi|
+
\mathcal L_{T}^{\pi,p}
+
\mathcal R_{\rm term}^{\pi,p}
\right)
\\
&\quad +
2(1+\eta)^2K_B(1+K_F)
\mathcal R_{\rm path}^{\pi,p}.
\end{aligned}
\end{equation}
Taking \(\eta=1\) in~\eqref{eq:error_parameterized_total_bound}
proves~\eqref{eq:error_main_bound} with
\(
\Lambda_p
:=
2\max\left\{
A_{p,1}^{\rm num},
4K_B(1+K_F)
\right\}.
\)

It remains to prove the refined estimate.
Fix \(\zeta>0\) and set
\(
\zeta_*:=\sqrt{1+\zeta}-1
\),
so that
\(
(1+\zeta_*)^2=1+\zeta
\).
Taking \(\eta=\zeta_*\) in~\eqref{eq:error_forward_piecewise_bound} and using Lemma~\ref{lem:error_reference_path_recursion} gives
\begin{equation}
\label{eq:error_forward_recursive_bound}
\mathfrak X_p^\pi
\le
\widetilde A_{p,\zeta_*}^{\rm fwd}
\left(
|\pi|
+
\mathcal L_{T}^{\pi,p}
+
\mathcal R_{\rm term}^{\pi,p}
\right)
+
\frac{(1+\zeta)K_BK_F}{p}
\sum_{j=0}^{p-1}\mathfrak X_j^\pi.
\end{equation}
Similarly, taking \(\eta=\zeta_*\) in \eqref{eq:error_numerical_reference_bound} and applying \eqref{eq:error_reference_path_recursion}, we obtain
\begin{equation}
\label{eq:error_refined_numerical_reference_bound}
\mathcal D_p^\pi
\le
A_{p,\zeta_*}^{\rm num}
\left(
|\pi|
+
\mathcal L_{T}^{\pi,p}
+
\mathcal R_{\rm term}^{\pi,p}
\right)
+
\frac{(1+\zeta)K_B(1+K_F)}{p}
\sum_{j=0}^{p-1}\mathfrak X_j^\pi.
\end{equation}
Combining \eqref{eq:error_refined_numerical_reference_bound} with \eqref{eq:error_total_reference_decomposition} proves \eqref{eq:error_refined_bound} with \(A_{p,\zeta}:=2A_{p,\zeta_*}^{\rm num}\) and \(\Gamma_\zeta:=2(1+\zeta)K_B(1+K_F)\).
\end{proof}

\subsection{Proof of the convergence}
\label{subsec:error_proof_convergence}

We first upgrade the moment estimate in Lemma~\ref{lem:error_numerical_moment}, which holds for fixed \(p\), to a bound that is uniform with respect to the decoupling step.

\begin{lemma}[Uniform discrete moment estimate]
\label{lem:error_uniform_discrete_moment}
Let Assumptions~\ref{ass:error_coefficients}, \ref{ass:error_uniform_neural_bounds}, and \ref{ass:error_uniform_moment_stability} hold.
Then there exists a constant \(C_\star>0\), independent of \(p\), \(\pi\), and \(n\), such that, for every \(p\ge0\),
\begin{equation}
\label{eq:error_uniform_discrete_moment_bound}
\max_{0\le n\le N}
\mathbb{E}[
|\widetilde X_n^p|^2+|\widetilde Y_n^p|^2
]
+
\max_{0\le n\le N-1}
\mathbb{E}\|\widetilde Z_n^p\|^2
+
\sum_{n=0}^{N-1}
\mathbb{E}\|\widetilde Z_n^p\|^2\Delta t_n
\le C_\star.
\end{equation}
Moreover, for every \(p\ge1\),
\begin{equation}
\label{eq:error_uniform_reference_moment_bound}
\max_{0\le n\le N}
\mathbb{E}|\widetilde{\mathcal X}_n^{p-1}|^2
\le C_\star.
\end{equation}
\end{lemma}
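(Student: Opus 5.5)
The plan is to reuse the induction in the proof of Lemma~\ref{lem:error_numerical_moment} and simply track the recursion for the constants \(D_p\) more carefully. That proof already produced the affine recursion \eqref{eq:error_XY_moment_constant_recursion},
\[
D_p=\nu_0+\nu_1\overline D_{p-1},\qquad \overline D_{p-1}=\frac1p\sum_{j=0}^{p-1}D_j ,
\]
with \(\nu_0,\nu_1\) independent of \(p\), \(\pi\), and \(n\), and with \(D_0\) independent of \(\pi\) and \(n\). The whole task is therefore to show that \((D_p)_{p\ge0}\) is bounded uniformly in \(p\) once \(\nu_1<1\) (Assumption~\ref{ass:error_uniform_moment_stability}). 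After that, the remaining quantities in \eqref{eq:error_uniform_discrete_moment_bound} and \eqref{eq:error_uniform_reference_moment_bound} follow from \eqref{eq:error_Z_moment_step_p} and \eqref{eq:error_reference_moment_step_p}.

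The key step is to set
\[
M:=\max\Bigl\{D_0,\ \frac{\nu_0}{1-\nu_1}\Bigr\}
\]
and prove by strong induction that \(D_p\le M\) for all \(p\ge0\).
\begin{itemize}
\item The base case \(p=0\) is trivial.
\item If \(D_j\le M\) for all \(j<p\), then \(\overline D_{p-1}\le M\). Hence
\[
D_p\le \nu_0+\nu_1 M\le (1-\nu_1)M+\nu_1M=M,
\]
using \(\nu_0\le(1-\nu_1)M\). This is exactly where \(\nu_1<1\) is needed.
\end{itemize}

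One point must be checked carefully, and it is the only real obstacle. Both \(\nu_0=\Xi\mu_0\) and \(\nu_1=\Xi\mu_1\) must be truly independent of \(p\). In particular, \(\mu_0\) collects the terms \(3|x_0|^2\), \(12T^2L_0^2\), \(12T^2(L_b^z)^2\kappa_0\), \(9TL_0^2\), \(3K_{\rm nn}^2\), \(12T^2(L_f^z)^2\kappa_0\), and \(3T\kappa_0\). Independence of \(p\) relies on the initial value \(|\theta_{u_0}^{\,p-1}|\le K_{\rm nn}\) and on the neural bounds \(K_{\rm nn},K_\phi\) being uniform in \(p\), which Assumption~\ref{ass:error_uniform_neural_bounds} guarantees. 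Likewise, \(D_0\) from the \(p=0\) rollout depends only on the coefficients, \(K_{\rm nn}\), \(K_\phi\), \(x_0\), and \(T\).

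With \(D_p\le M\) for all \(p\), we get \(\overline D_{p-1}\le M\) for all \(p\ge1\). Then \eqref{eq:error_Z_moment_step_p} bounds the \(Z\)-terms by \((1+T)(\kappa_0+\kappa_1M)\), and \eqref{eq:error_reference_moment_step_p} gives \eqref{eq:error_uniform_reference_moment_bound} with bound \(M\). For \(p=0\), the constant \(C_0\) is also at most \(M+(1+T)(\kappa_0+\kappa_1M)\), since \(D_0\le M\). Therefore
\[
C_\star:=M+(1+T)(\kappa_0+\kappa_1M)
\]
works for every \(p\ge0\), independent of \(p\), \(\pi\), and \(n\).
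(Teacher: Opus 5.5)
Your proposal is correct and follows the paper's proof essentially step for step. The paper also sets \(D_\star:=\max\{D_0,\nu_0/(1-\nu_1)\}\), proves \(D_p\le D_\star\) by strong induction on \(D_p=\nu_0+\nu_1\overline D_{p-1}\) using \(\nu_1<1\), and takes \(C_\star:=D_\star+(1+T)(\kappa_0+\kappa_1D_\star)\), bounding \(C_p\) for \(p\ge1\) and \(C_0\) separately. One harmless bookkeeping slip: \(\mu_0\) contains \(12T^2L_0^2\) twice, once from \(K_p^{\rm fwd}\) and once from \(K_p^{\rm bwd}\), which does not affect the argument.
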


\begin{proof}
Let \(D_p\) and \(C_p\) be the constants constructed in the proof of Lemma~\ref{lem:error_numerical_moment}. 
By~\eqref{eq:error_XY_moment_constant_recursion}, \(D_p=\nu_0+\nu_1\overline D_{p-1}\), where \(\overline D_{p-1}=p^{-1}\sum_{j=0}^{p-1}D_j\) for \(p\ge1\).
Since \(\nu_1<1\) by Assumption~\ref{ass:error_uniform_moment_stability}, define \(D_\star:=\max\{D_0,\nu_0/(1-\nu_1)\}\).
We prove by induction that \(D_p\le D_\star\) for every \(p\ge0\).
The assertion is immediate for \(p=0\). Suppose that
\(D_j\le D_\star\) for \(0\le j\le p-1\). Then
\(\overline D_{p-1}\le D_\star\),
and hence
\[
D_p
=
\nu_0+\nu_1\overline D_{p-1}
\le
\nu_0+\nu_1D_\star
\le
D_\star,
\]
where the last inequality follows from \(D_\star\ge\nu_0/(1-\nu_1)\). 
Therefore,
\begin{equation}
\label{eq:error_uniform_D_bound}
\sup_{p\ge0}D_p\le D_\star,
\qquad
\sup_{p\ge1}\overline D_{p-1}\le D_\star.
\end{equation}

Recall \(\kappa_0=
2K_\phi^2(L_0+L_\sigma^yK_{\rm nn})^2\) and define
\(
C_\star
:=
D_\star
+
(1+T)
\left(
\kappa_0+\kappa_1D_\star
\right).
\)
For \(p\ge1\), \eqref{eq:error_moment_constant_recursion} and \eqref{eq:error_uniform_D_bound} give \(C_p\le C_\star\).
The same bound holds for \(C_0\), since \(D_0\le D_\star\).
Thus
\(
\sup_{p\ge0}C_p\le C_\star
\).
Applying~\eqref{eq:error_numerical_moment_bound} and \eqref{eq:error_reference_pi_bound} proves \eqref{eq:error_uniform_discrete_moment_bound} and \eqref{eq:error_uniform_reference_moment_bound}, respectively.
\end{proof}

\begin{corollary}[Uniform refined error estimates]
\label{cor:error_uniform_refined_estimates}
Let the assumptions of Theorem~\ref{thm:error_estimate} and Assumption~\ref{ass:error_uniform_moment_stability} hold.
Then, for every \(\zeta>0\), there exist constants \(\overline A_\zeta^{\rm fwd}>0\) and \(\overline A_\zeta^{\rm tot}>0\), independent of \(p\) and \(\pi\), such that, for every \(p\ge1\) and every partition \(\pi\),
\begin{equation}
\label{eq:error_uniform_forward_recursive_bound}
\mathfrak X_p^\pi
\le
\overline A_\zeta^{\rm fwd}
\left(
|\pi|
+
\mathcal L_{T}^{\pi,p}
+
\mathcal R_{\rm term}^{\pi,p}
\right)
+
\frac{(1+\zeta)K_BK_F}{p}
\sum_{j=0}^{p-1}\mathfrak X_j^\pi,
\end{equation}
and
\begin{equation}
\label{eq:error_uniform_total_recursive_bound}
\|(X^{\pi,p}-X,Y^{\pi,p}-Y,Z^{\pi,p}-Z)\|_{\mathcal E}^2
\le
\overline A_\zeta^{\rm tot}
\left(
|\pi|
+
\mathcal L_{T}^{\pi,p}
+
\mathcal R_{\rm term}^{\pi,p}
\right)
+
\frac{\Gamma_\zeta}{p}
\sum_{j=0}^{p-1}\mathfrak X_j^\pi
+
2\mathcal E_{dec}^p,
\end{equation}
where \(\Gamma_\zeta=2(1+\zeta)K_B(1+K_F)\).
Moreover, the constants \(\Lambda_p\) in Theorem~\ref{thm:error_estimate} can be chosen such that
\(
\sup_{p\ge1}\Lambda_p<\infty.
\)
\end{corollary}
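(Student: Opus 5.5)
The plan is to revisit the proof of Theorem~\ref{thm:error_estimate} and check that every $p$-dependent constant there depends on $p$ only through $C_p$ from Lemma~\ref{lem:error_numerical_moment}. Under Assumption~\ref{ass:error_uniform_moment_stability}, Lemma~\ref{lem:error_uniform_discrete_moment} gives $\sup_{p\ge0}C_p\le C_\star$, so $C_p$ may be replaced by $C_\star$ throughout.

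Concretely, I would first record that the extension constants $K_p^{X,{\rm ext}}=2TK_p^b+2K_p^\sigma$ and $K_p^{Y,{\rm ext}}=2TK_p^f+2C_p$ from Lemma~\ref{lem:error_numerical_extension} are nondecreasing affine functions of $C_p$. The constants $K_p^b,K_p^\sigma,K_p^f$ in \eqref{eq:error_extension_integrand_bounds} have the same form. Hence all of them are bounded by the corresponding expressions with $C_\star$, which are independent of $p$ and $\pi$. Note also that Lemma~\ref{lem:error_numerical_moment} bounds $\max_n\mathbb{E}|\widetilde{\mathcal X}_n^{p-1}|^2$ by $C_p$; this is exactly what enters $K_p^{f,{\rm loc}}$.

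Next I would propagate these bounds through the chain of constants in the proof of Theorem~\ref{thm:error_estimate}:
\[
K_p^{f,{\rm loc}},\ K_{p,\eta}^{f,\rho},\ A_{p,\eta}^{\rm bwd},\ K_p^{\rm res},\ K_p^{\rm fwd,res},\ A_{p,\eta}^{\rm fwd},\ \widetilde A_{p,\eta}^{\rm fwd},\ \widetilde A_{p,\eta}^{\rm bwd},\ A_{p,\eta}^{\rm num}.
\]
Each is built from the previous ones, and from $C_p$, $K_p^{X,{\rm ext}}$, $K_p^{Y,{\rm ext}}$, by sums, products with $p$-independent constants ($K_\eta^{\rm term}$, $e^{\beta_fT}$, $e^{\kappa_xT}$, $K_F$, Lipschitz constants, $T$), and maxima. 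All these operations are monotone in nonnegative arguments. So substituting $C_\star$ for $C_p$ yields majorants $\overline A^{\rm fwd}_{\eta}$, $\overline A^{\rm num}_{\eta}$ independent of $p$ and $\pi$.

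Since the estimates \eqref{eq:error_forward_piecewise_bound} and \eqref{eq:error_numerical_reference_bound} remain valid when their constants are enlarged, I would then take $\eta=\zeta_*=\sqrt{1+\zeta}-1$ and apply Lemma~\ref{lem:error_reference_path_recursion}, exactly as in \eqref{eq:error_forward_recursive_bound} and \eqref{eq:error_refined_numerical_reference_bound}. This gives \eqref{eq:error_uniform_forward_recursive_bound} with $\overline A_\zeta^{\rm fwd}:=\overline A^{\rm fwd}_{\zeta_*}$. Combining with \eqref{eq:error_total_reference_decomposition} then gives \eqref{eq:error_uniform_total_recursive_bound} with $\overline A_\zeta^{\rm tot}:=2\overline A^{\rm num}_{\zeta_*}$. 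The coefficients $(1+\zeta)K_BK_F$ and $\Gamma_\zeta$ are already $p$-independent.

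Finally, $\Lambda_p=2\max\{A^{\rm num}_{p,1},4K_B(1+K_F)\}$ can be replaced by $2\max\{\overline A^{\rm num}_{1},4K_B(1+K_F)\}$, which gives $\sup_p\Lambda_p<\infty$.

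The main obstacle is bookkeeping. One must verify that no hidden $p$-dependence enters other than through $C_p$, for example through $\theta_{u_0}^{p-1}$ (handled by $K_{\rm nn}$) or the reference-path moments (handled by \eqref{eq:error_uniform_reference_moment_bound}). One must also check that each constant is monotone in $C_p$, which holds because all of them are built from nonnegative terms by sums, products and maxima.
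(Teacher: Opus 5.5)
Your proposal is correct and follows essentially the same route as the paper. It uses Lemma~\ref{lem:error_uniform_discrete_moment} to get $\sup_p C_p\le C_\star$, propagates this through the chain of constants in the proof of Theorem~\ref{thm:error_estimate}, takes $\eta=\zeta_*$ (and $\eta=1$ for $\Lambda_p$), and invokes \eqref{eq:error_forward_recursive_bound} and \eqref{eq:error_refined_bound}. The only cosmetic difference is that you build explicit majorants by monotonically substituting $C_\star$ for $C_p$, whereas the paper simply defines $\overline A_\zeta^{\rm fwd}$ and $\overline A_\zeta^{\rm tot}$ as the suprema over $p$ of $\widetilde A_{p,\zeta_*}^{\rm fwd}$ and $2A_{p,\zeta_*}^{\rm num}$, respectively.
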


\begin{proof}
Fix \(\zeta>0\) and set
\(\zeta_*:=\sqrt{1+\zeta}-1\).
By Lemma~\ref{lem:error_uniform_discrete_moment},
\(C_p\le C_\star\) uniformly in \(p\).
Hence~\eqref{eq:error_extension_integrand_bounds} and the definitions in Lemma~\ref{lem:error_numerical_extension} show that \(K_p^b\), \(K_p^\sigma\), \(K_p^f\), \(K_p^{X,{\rm ext}}\), and \(K_p^{Y,{\rm ext}}\) are uniformly bounded in \(p\).

For every fixed \(\eta>0\), the definitions following \eqref{eq:error_rho_component_bounds}, \eqref{eq:error_rho_bound}, and \eqref{eq:error_backward_reference_bound} then give uniform bounds for \(K_p^{f,{\rm loc}}\), \(K_{p,\eta}^{f,\rho}\), and \(A_{p,\eta}^{\rm bwd}\).
Similarly, \eqref{eq:error_forward_residual_bound}--\eqref{eq:error_forward_reference_bound} give uniform bounds for \(K_p^{\rm res}\), \(K_p^{\rm fwd,res}\), and \(A_{p,\eta}^{\rm fwd}\).
Finally, the definitions following \eqref{eq:error_backward_piecewise_bound}, \eqref{eq:error_forward_piecewise_bound}, and \eqref{eq:error_numerical_reference_bound} show that \(\widetilde A_{p,\eta}^{\rm bwd}\), \(\widetilde A_{p,\eta}^{\rm fwd}\), and \(A_{p,\eta}^{\rm num}\) are uniformly bounded in \(p\).

Taking \(\eta=\zeta_*\), define
\(
\overline A_\zeta^{\rm fwd}
:=
\sup_{p\ge1}\widetilde A_{p,\zeta_*}^{\rm fwd}
\)
and
\(
\overline A_\zeta^{\rm tot}
:=
2\sup_{p\ge1}A_{p,\zeta_*}^{\rm num}.
\)
Then~\eqref{eq:error_forward_recursive_bound} gives \eqref{eq:error_uniform_forward_recursive_bound}, while \eqref{eq:error_refined_bound} gives \eqref{eq:error_uniform_total_recursive_bound}.

Taking \(\eta=1\) instead gives \(\sup_{p\ge1}A_{p,1}^{\rm num}<\infty\).
Since
\(
\Lambda_p
=
2\max\{A_{p,1}^{\rm num},4K_B(1+K_F)\}
\),
we also have
\(
\sup_{p\ge1}\Lambda_p<\infty
\).
\end{proof}

We next establish convergence of the exact continuous decoupling iteration.

\begin{lemma}[Convergence of the exact decoupling iteration]
\label{lem:error_exact_decoupling_convergence}
Let Assumptions~\ref{ass:error_coefficients}, \ref{ass:error_wellposedness}, and \ref{ass:error_reference_contraction} hold.
Set
\(
q:=\sqrt{K_BK_F}
\)
and
\(
a_0
:=
\bigl(
\sup_{0\le t\le T}
\mathbb{E}|\mathcal X_t^0-X_t|^2
\bigr)^{1/2}.
\)
Then \(q\in[0,1)\), \(a_0<\infty\), and, for every \(p\ge1\),
\begin{equation}
\label{eq:error_exact_decoupling_rate}
\mathcal E_{dec}^p
\le
K_B(1+K_F)a_0^2
\bigl(
\frac{2}{p+1}
\bigr)^{2(1-q)}.
\end{equation}
In particular,
\begin{equation}
\label{eq:error_exact_decoupling_convergence}
\lim_{p\to\infty}\mathcal E_{dec}^p=0.
\end{equation}
\end{lemma}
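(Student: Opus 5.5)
The plan is to run a scalar recursion on the $L^2$-distance between the reference path and the true forward process, using Lemma~\ref{lem:error_coefficient_stability} for the contraction and the running-average rule~\eqref{eq:fp_average_continuous} for the damping. For $p\ge0$ set
\[
e_p:=\Bigl(\sup_{0\le t\le T}\mathbb{E}|\mathcal X_t^{p}-X_t|^2\Bigr)^{1/2},
\qquad
a_p:=\Bigl(\sup_{0\le t\le T}\mathbb{E}|X_t^{p}-X_t|^2\Bigr)^{1/2}\quad(p\ge1),
\]
so that $e_0=a_0$. The preliminary claims are immediate. Assumption~\ref{ass:error_reference_contraction} gives $K_BK_F<1$, hence $q\in[0,1)$. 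Assumption~\ref{ass:error_wellposedness} places both $\mathcal X^0$ and $X$ in $\mathcal S^2_{\mathbb F}$, so $a_0<\infty$.

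\textbf{Step 1 (one decoupling step contracts).} Chaining~\eqref{eq:error_coefficient_backward_stability} into~\eqref{eq:error_coefficient_forward_stability} gives, for $p\ge1$,
\[
a_p^2\le K_FK_B\,e_{p-1}^2,
\qquad
\sup_t\mathbb{E}|Y^p_t-Y_t|^2+\mathbb{E}\!\int_0^T\|Z^p_t-Z_t\|^2\,\mathrm{d}t\le K_B\,e_{p-1}^2 .
\]
In particular $a_p\le q\,e_{p-1}$. Adding the two displays and using the definition~\eqref{eq:error_decoupling_error} yields
\[
\mathcal E^p_{dec}\le K_B(1+K_F)\,e_{p-1}^2 .
\]
So it suffices to prove $e_{p-1}\le a_0\bigl(2/(p+1)\bigr)^{1-q}$.

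\textbf{Step 2 (averaging recursion).} By~\eqref{eq:fp_average_continuous},
\[
\mathcal X^p_t-X_t=\frac{p}{p+1}\bigl(\mathcal X^{p-1}_t-X_t\bigr)+\frac1{p+1}\bigl(X^p_t-X_t\bigr).
\]
Apply Minkowski's inequality in $L^2(\Omega)$ for each fixed $t$, then take the supremum over $t$ on each term separately, and finally use Step~1. This gives
\[
e_p\le \frac{p}{p+1}e_{p-1}+\frac{1}{p+1}a_p\le \frac{p+q}{p+1}\,e_{p-1}=\Bigl(1-\frac{1-q}{p+1}\Bigr)e_{p-1}.
\]
Iterating, then using $1-x\le e^{-x}$ and $\sum_{j=2}^{p}\frac1j\ge\int_2^{p+1}\frac{\mathrm{d}x}{x}=\ln\frac{p+1}{2}$, we get
\[
e_{p-1}\le a_0\prod_{j=2}^{p}\Bigl(1-\frac{1-q}{j}\Bigr)\le a_0\exp\Bigl(-(1-q)\sum_{j=2}^{p}\frac1j\Bigr)\le a_0\Bigl(\frac{2}{p+1}\Bigr)^{1-q}.
\]
For $p=1$ the product is empty, and the bound $e_0=a_0$ is consistent with it.

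\textbf{Step 3 (conclusion).} Squaring the bound from Step~2 and inserting it into Step~1 gives~\eqref{eq:error_exact_decoupling_rate}. Since $1-q>0$, the right-hand side tends to $0$ as $p\to\infty$, which proves~\eqref{eq:error_exact_decoupling_convergence}.

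The only delicate point is Step~2. One must apply Minkowski's inequality to the $L^2$ norms rather than expand squares, since a squared expansion would introduce constants that destroy the factor $(p+q)/(p+1)$. One must also match the index range of the harmonic sum so that the constant comes out as $2/(p+1)$. Everything else is a direct application of Lemma~\ref{lem:error_coefficient_stability}.
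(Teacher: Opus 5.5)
Your proposal is correct and follows the paper's proof essentially step for step. Like the paper, you chain the two stability estimates of Lemma~\ref{lem:error_coefficient_stability} to get the contraction factor $q$, and apply Minkowski's inequality to the running-average rule to obtain $e_p\le\bigl(1-\frac{1-q}{p+1}\bigr)e_{p-1}$. You then bound the product via $1-r\le e^{-r}$ and a logarithmic lower bound on the harmonic sum. The differences are only cosmetic: your $e_p$ is the paper's $a_p$, and you bound $e_{p-1}$ directly through $\prod_{j=2}^{p}$, whereas the paper first proves $a_p\le a_0\bigl(2/(p+2)\bigr)^{1-q}$ and then shifts $p\mapsto p-1$.
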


\begin{proof}
For \(p\ge0\), set
\(
a_p
:=
\bigl(
\sup_{0\le t\le T}
\mathbb{E}|\mathcal X_t^p-X_t|^2
\bigr)^{1/2}.
\)
By Assumption~\ref{ass:error_wellposedness}, \(\mathcal X^0,X\in\mathcal S_{\mathbb F}^2\), and hence \(a_0<\infty\).
Assumption~\ref{ass:error_reference_contraction} gives \(q\in[0,1)\).

For every \(p\ge1\), the definition of \(a_{p-1}\) and \eqref{eq:error_coefficient_backward_stability} yield
\begin{equation}
\label{eq:error_exact_decoupling_backward_stability}
\sup_{0\le t\le T}\mathbb{E}|Y_t^p-Y_t|^2
+
\mathbb{E}\int_0^T\|Z_t^p-Z_t\|^2\,\mathrm{d}t
\le
K_Ba_{p-1}^2.
\end{equation}
Combining this estimate with \eqref{eq:error_coefficient_forward_stability}, we obtain
\begin{equation}
\label{eq:error_exact_decoupling_forward_contraction}
\bigl(
\sup_{0\le t\le T}
\mathbb{E}|X_t^p-X_t|^2
\bigr)^{1/2}
\le
(
K_BK_Fa_{p-1}^2
)^{1/2}
=
q\,a_{p-1}.
\end{equation}

The continuous running-average rule~\eqref{eq:fp_average_continuous} gives \(\mathcal X^p-X=\frac{p}{p+1}(\mathcal X^{p-1}-X)+\frac1{p+1}(X^p-X)\).
For each \(t\in[0,T]\), Minkowski's inequality therefore implies
\[
\left(
\mathbb{E}|\mathcal X_t^p-X_t|^2
\right)^{1/2}
\le
\frac{p}{p+1}
(
\mathbb{E}|\mathcal X_t^{p-1}-X_t|^2
)^{1/2}
+
\frac1{p+1}
(
\mathbb{E}|X_t^p-X_t|^2
)^{1/2}.
\]
Taking the supremum over \(t\in[0,T]\) and using
\eqref{eq:error_exact_decoupling_forward_contraction}, we find
\begin{equation}
\label{eq:error_exact_decoupling_average_recursion}
a_p
\le
\frac{p}{p+1}a_{p-1}
+
\frac{q}{p+1}a_{p-1}
=
(
1-\frac{1-q}{p+1}
)a_{p-1}.
\end{equation}
Iterating \eqref{eq:error_exact_decoupling_average_recursion} and using \(1-r\le e^{-r}\) for \(r\ge0\), we obtain
\[
a_p
\le
a_0
\prod_{k=1}^p
\bigl(
1-\frac{1-q}{k+1}
\bigr)
\le
a_0
\exp\bigl[
-(1-q)
\sum_{k=1}^p\frac1{k+1}
\bigr].
\]
Furthermore, using
\(
\sum_{k=1}^p(k+1)^{-1}
\ge
\log((p+2)/2)
\) gives
\begin{equation}
\label{eq:error_exact_reference_rate}
a_p
\le
a_0
\bigl(
\frac{2}{p+2}
\bigr)^{1-q}.
\end{equation}
The same bound holds trivially for \(p=0\).

We finally estimate the decoupling error~\eqref{eq:error_decoupling_error}. 
By \eqref{eq:error_exact_decoupling_backward_stability} and \eqref{eq:error_exact_decoupling_forward_contraction},
\[
\begin{aligned}
\mathcal E_{dec}^p
&\le
\sup_{0\le t\le T}\mathbb{E}|X_t^p-X_t|^2
+
\sup_{0\le t\le T}\mathbb{E}|Y_t^p-Y_t|^2
+
\mathbb{E}\int_0^T\|Z_t^p-Z_t\|^2\,\mathrm{d}t
\\
&\le
K_BK_Fa_{p-1}^2+K_Ba_{p-1}^2
=
K_B(1+K_F)a_{p-1}^2.
\end{aligned}
\]
Applying \eqref{eq:error_exact_reference_rate} with \(p-1\) proves \eqref{eq:error_exact_decoupling_rate}. 
Since \(q<1\), the right-hand side of~\eqref{eq:error_exact_decoupling_rate} converges to zero as \(p\to\infty\), and hence \eqref{eq:error_exact_decoupling_convergence} follows.
\end{proof}

We are now ready to prove the convergence theorem.

\begin{proof}[Proof of Theorem~\ref{thm:error_convergence}]
By Assumption~\ref{ass:error_reference_contraction},
\(K_BK_F<1\). Choose \(\zeta>0\) such that
\(
q_\zeta
:=
(1+\zeta)K_BK_F
<1.
\)
For \(p\ge1\), set
\(
r_p^\pi
:=
\mathcal L_{T}^{\pi,p}
+
\mathcal R_{\rm term}^{\pi,p}
\)
and
\(
\delta_p^\pi
:=
|\pi|+r_p^\pi.
\)
Assumption~\ref{ass:error_residual_consistency} gives
\begin{equation}
\label{eq:error_convergence_joint_residual}
\lim_{\substack{p\to\infty\\|\pi|\to0}}
\delta_p^\pi
=
0.
\end{equation}

We first establish a uniform bound for the forward numerical errors.
By Lemma~\ref{lem:error_uniform_discrete_moment},
\(
\sup_{0\le t\le T}\mathbb{E}|X_t^{\pi,p}|^2
\le C_\star
\)
for every \(p\ge0\), uniformly with respect to \(\pi\).

For \(p\ge1\), the definition of \(\mathcal E_{dec}^p\) gives
\[
\sup_{0\le t\le T}\mathbb{E}|X_t^p|^2
\le
2\sup_{0\le t\le T}\mathbb{E}|X_t|^2
+
2\mathcal E_{dec}^p.
\]
Lemma~\ref{lem:error_exact_decoupling_convergence} and
Assumption~\ref{ass:error_wellposedness} therefore imply
\(
\sup_{p\ge1}
\sup_{0\le t\le T}
\mathbb{E}|X_t^p|^2
<\infty.
\)

For \(p=0\), by the definition of \(\mathfrak X_0^\pi\),
\[
\mathfrak X_0^\pi
\le
2\sup_{0\le t\le T}
\mathbb{E}|X_t^{\pi,0}|^2
+
2\sup_{0\le t\le T}
\mathbb{E}|\mathcal X_t^0|^2.
\]
Hence Lemma~\ref{lem:error_uniform_discrete_moment} and
Assumption~\ref{ass:error_wellposedness} imply
\(
\sup_\pi\mathfrak X_0^\pi<\infty.
\)
Consequently, there exists a constant \(M>0\), independent of \(p\) and \(\pi\), such that
\begin{equation}
\label{eq:error_convergence_uniform_forward_bound}
\mathfrak X_p^\pi\le M,
\qquad p\ge0.
\end{equation}
Define the joint upper limit by
\[
L
:=
\limsup_{\substack{p\to\infty\\|\pi|\to0}}
\mathfrak X_p^\pi
=
\inf_{\substack{P\ge1\\h>0}}
\sup_{\substack{p\ge P\\|\pi|\le h}}
\mathfrak X_p^\pi.
\]
By \eqref{eq:error_convergence_uniform_forward_bound}, \(L<\infty\).

We next estimate the averaged forward errors. For every fixed
\(J\ge1\) and \(p>J\),
\[
\frac1p\sum_{j=0}^{p-1}\mathfrak X_j^\pi
=
\frac1p\sum_{j=0}^{J-1}\mathfrak X_j^\pi
+
\frac1p\sum_{j=J}^{p-1}\mathfrak X_j^\pi
\le
\frac{JM}{p}
+
\sup_{j\ge J}\mathfrak X_j^\pi.
\]
Taking the joint upper limit and then the infimum over \(J\ge1\), we obtain
\begin{equation}
\label{eq:error_convergence_joint_average_bound}
\limsup_{\substack{p\to\infty\\|\pi|\to0}}
\frac1p\sum_{j=0}^{p-1}\mathfrak X_j^\pi
\le
\inf_{\substack{J\ge1\\h>0}}
\sup_{\substack{j\ge J\\|\pi|\le h}}
\mathfrak X_j^\pi
=
L.
\end{equation}
Taking the joint upper limit in \eqref{eq:error_uniform_forward_recursive_bound}, and using \eqref{eq:error_convergence_joint_residual} and \eqref{eq:error_convergence_joint_average_bound}, yields
\[
L
\le
q_\zeta
\limsup_{\substack{p\to\infty\\|\pi|\to0}}
\frac1p\sum_{j=0}^{p-1}\mathfrak X_j^\pi
\le
q_\zeta L.
\]
Since \(L\ge0\) and \(q_\zeta<1\), we have \(L=0\), and therefore
\(
\lim_{\substack{p\to\infty\\|\pi|\to0}}
\mathfrak X_p^\pi
=
0.
\)
Combining \(L=0\) with
\eqref{eq:error_convergence_joint_average_bound}, and using the nonnegativity of the averaged errors, we also obtain
\begin{equation}
\label{eq:error_convergence_joint_average_zero}
\lim_{\substack{p\to\infty\\|\pi|\to0}}
\frac1p\sum_{j=0}^{p-1}\mathfrak X_j^\pi
=
0.
\end{equation}

Finally, taking the joint upper limit in \eqref{eq:error_uniform_total_recursive_bound}, we obtain
\[
\limsup_{\substack{p\to\infty\\|\pi|\to0}}
\|(X^{\pi,p}-X,Y^{\pi,p}-Y,Z^{\pi,p}-Z)\|_{\mathcal E}^2
\le
\overline A_\zeta^{\rm tot}
\limsup_{\substack{p\to\infty\\|\pi|\to0}}
\delta_p^\pi
+
\Gamma_\zeta
\limsup_{\substack{p\to\infty\\|\pi|\to0}}
\frac1p\sum_{j=0}^{p-1}\mathfrak X_j^\pi
+
2\lim_{p\to\infty}\mathcal E_{dec}^p.
\]
The three terms on the right-hand side vanish by \eqref{eq:error_convergence_joint_residual}, \eqref{eq:error_convergence_joint_average_zero}, and Lemma~\ref{lem:error_exact_decoupling_convergence}, respectively.
Hence~\eqref{eq:error_convergence_result} follows.
\end{proof}

\section{Numerical Experiments}
\label{sec:numerical_experiments}

We present numerical experiments to assess the accuracy, stability, and efficiency of the deep truncated FBSDE method.
The implementation is carried out in Python using PyTorch on a machine equipped with an Intel Core Ultra 9 275HX processor, an NVIDIA GeForce RTX 5070 Laptop GPU (graphics processing unit) with 8\,GB of memory, and 32\,GB of random-access memory (RAM).
The reported training times refer to the complete end-to-end training procedure.

Each subnetwork is a fully connected neural network with two hidden layers.
For the proposed method, we use tanh activation without normalization.
We set the maximum number of iterative decoupling steps to \(P=5\) for coupled FBSDEs and \(P=1\) for decoupled FBSDEs, with stopping tolerance \(\delta=10^{-5}\), and keep these choices fixed across the corresponding numerical experiments.
Since one parameter update is performed at each completed decoupling step, each outer iteration contains at most \(P\) parameter updates.
For comparisons in the coupled setting, the proposed method therefore uses one fifth of the outer iterations of the competing methods, so that the maximum numbers of parameter updates are comparable.

The competing methods retain the activation functions and normalization procedures prescribed in their original references.
Except for the DBDP methods, all methods use batches of \(B=64\) stochastic paths.
All methods are trained with Adam~\cite{kingma2015adam}.
Since the DBDP methods solve a sequence of local optimization problems by backward induction, their remaining training settings are specified separately in the corresponding examples.

\subsection{Example 1: scalar Burgers equation}

We consider the one-dimensional Burgers equation
\begin{lastnumbercases}\label{eq:Burgers_1D}
        u_t + u u_x = 0, & (t,x) \in (0,T] \times\mathbb{R}, \nonumber\\
        u(0,x) = \sin x.
\end{lastnumbercases}
To apply the proposed framework for quasilinear parabolic PDEs~\eqref{eq:quasi_para_PDE}, we introduce the viscous approximation
\begin{lastnumbercases}\label{eq:Burgers_viscous}
        u^\varepsilon_t + u^\varepsilon u^\varepsilon_x
        = \varepsilon u^\varepsilon_{xx},
        & (t,x) \in (0,T] \times\mathbb{R}, \nonumber\\
        u^\varepsilon(0,x) = \sin x,
\end{lastnumbercases}
where \(\varepsilon>0\).
As \(\varepsilon\downarrow0\), \(u^\varepsilon\) converges almost everywhere to the entropy solution of~\eqref{eq:Burgers_1D}~\cite{MR47234}.
Since BSDEs are associated with terminal conditions, we introduce the time reversal \(v^\varepsilon(t,x)=u^\varepsilon(T-t,x)\). Then \(v^\varepsilon\) satisfies
\begin{lastnumbercases}
\label{eq:Burgers_Transformed}
        v^\varepsilon_t - v^\varepsilon v^\varepsilon_x + \varepsilon v^\varepsilon_{xx} = 0, & (t,x) \in [0,T) \times \mathbb{R}, \nonumber\\
        v^\varepsilon(T,x)=\sin x.
\end{lastnumbercases}
This equation admits both decoupled and coupled FBSDE representations.
For each fixed \(\varepsilon>0\), these FBSDEs represent the smooth solution of the viscous Burgers equation.

For the decoupled representation, we keep the nonlinear transport term in the generator.
Comparing the second-order term in~\eqref{eq:Burgers_Transformed} with the quasilinear parabolic form gives
\(b=0\) and \(\sigma=\sqrt{2\varepsilon}\).
By the nonlinear Feynman--Kac formula~\eqref{eq:nonlinear_feynman_kac},
\(
y=v^\varepsilon
\)
and
\(
z=v_x^\varepsilon\sigma=\sqrt{2\varepsilon}\,v_x^\varepsilon.
\)
Hence the transport term \(-v^\varepsilon v_x^\varepsilon\) is represented by
\(
f(t,x,y,z)=-yz/\sqrt{2\varepsilon}.
\)
The corresponding decoupled FBSDE is
\begin{lastnumbercases}
\label{eq:Form_Burgers_1D_decoupled}
        X_t=x+\displaystyle\int_0^t \sqrt{2\varepsilon}\,\mathrm{d}W_s,\nonumber\\[6pt]
        Y_t=\sin(X_T)-\displaystyle\int_t^T
        \dfrac{1}{\sqrt{2\varepsilon}}Y_sZ_s\,\mathrm{d}s
        -\displaystyle\int_t^T Z_s\,\mathrm{d}W_s.
\end{lastnumbercases}

Alternatively, the nonlinear transport term can be incorporated into the forward drift. This gives \(b(t,x,y,z)=-y\), \(\sigma=\sqrt{2\varepsilon}\), and \(f=0\), and hence the coupled FBSDE
\begin{lastnumbercases}
\label{eq:Form_Burgers_1D_coupled}
        X_t=x-\displaystyle\int_0^t Y_s\,\mathrm{d}s
        +\displaystyle\int_0^t \sqrt{2\varepsilon}\,\mathrm{d}W_s,\nonumber\\[6pt]
        Y_t=\sin(X_T)-\displaystyle\int_t^T Z_s\,\mathrm{d}W_s.
\end{lastnumbercases}

Although~\eqref{eq:Form_Burgers_1D_decoupled} and \eqref{eq:Form_Burgers_1D_coupled} are equivalent at the PDE level, their numerical structures are quite different.
The decoupled generator contains the singular factor \(\varepsilon^{-1/2}\), whereas the coupled formulation removes this singularity by representing the nonlinear transport through the forward drift.
We therefore apply the deep truncated FBSDE method to the coupled formulation~\eqref{eq:Form_Burgers_1D_coupled}.
The entropy solution of the inviscid Burgers equation, obtained from its characteristic representation, is used as the reference solution.
Hence, the reported errors reflect the combined effect of the vanishing-viscosity approximation and the numerical approximation of the viscous problem.

For the inviscid Burgers equation~\eqref{eq:Burgers_1D}, shocks form when characteristic curves intersect.
The first breaking time is \(T_b=-1/\min_x\cos x=1\)~\cite{MR1153252}.
We therefore consider \(T=0.5\), \(1.0\), and \(1.5\), corresponding to the pre-shock, shock-formation, and post-shock regimes, and take \(\varepsilon\in\{10^{-1},10^{-2},\ldots,10^{-8}\}\).
Although the proposed method can be trained directly on a spatial interval, methods based on stochastic Feynman--Kac formulations are primarily designed for pointwise estimation. 
For a fair comparison, all methods are evaluated at \(256\) uniformly spaced points in \([0,2\pi]\), with each point treated as a separate initial state \(x_0\).

\begin{figure}[tbp]
    \centering
    \includegraphics[width=1.0\linewidth]{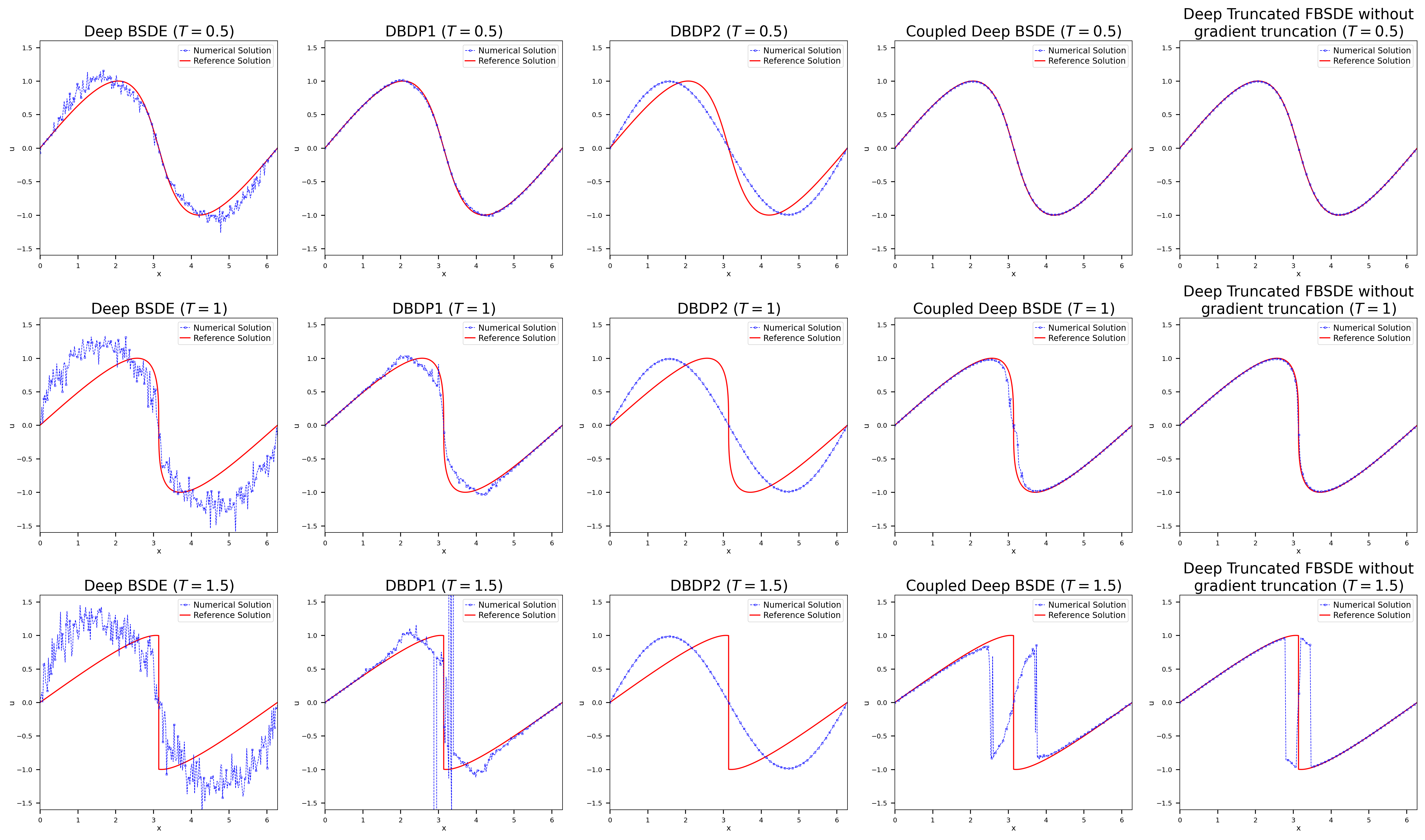}
    \caption{Comparison for the viscous 1D Burgers equation~\eqref{eq:Burgers_viscous} with \(\varepsilon=10^{-2}\).
    The rows correspond to \(T=0.5,1.0,1.5\).
    From left to right, the columns show the deep BSDE method, DBDP1, and DBDP2 with the decoupled formulation~\eqref{eq:Form_Burgers_1D_decoupled}, followed by the coupled deep BSDE method and the deep truncated FBSDE method without gradient truncation, both with the coupled formulation~\eqref{eq:Form_Burgers_1D_coupled}.}
    \label{Fig:comparison_Burgers_1D}
\end{figure}

We use \(N=10\), \(20\), and \(30\) for \(T=0.5\), \(1.0\), and \(1.5\), respectively, with hidden-layer width \(10\).
The trainable initial value of \(Y\) is initialized according to the terminal condition \(g\).
The proposed method is trained for \(K=300\) outer iterations, corresponding to at most \(1500\) parameter updates, with Adam learning rates \(10^{-2}\), \(10^{-3}\), and \(10^{-4}\) over three successive blocks of \(100\) iterations.
The variant without gradient truncation uses the same coupled formulation and training settings, with the stop-gradient operation removed.
The deep BSDE method, DBDP1, and DBDP2 are tested with the decoupled formulation~\eqref{eq:Form_Burgers_1D_decoupled}, while the coupled deep BSDE method uses the coupled formulation~\eqref{eq:Form_Burgers_1D_coupled}.
The deep BSDE method and the coupled deep BSDE method use the same time discretization and network size as the proposed method, but are trained for \(1500\) iterations, with the same three-stage learning-rate schedule over blocks of \(500\) iterations.
For the DBDP methods, we generate a fixed set of \(50\,000\) forward paths and use a mini-batch size of \(1000\), as in Ref.~\refcite{MR4081911}.
Each local subnetwork is trained for \(150\) epochs, corresponding to \(7500\) Adam updates, with the same three-stage learning-rate schedule.

Figure~\ref{Fig:comparison_Burgers_1D} shows that the deep BSDE method and DBDP1 lose accuracy and exhibit oscillations near the shock as \(T\) increases, reflecting the difficulty introduced by the singular factor \(\varepsilon^{-1/2}\) in the generator.
The coupled deep BSDE method performs better in smooth regions but still deteriorates near the post-shock regime.
DBDP2 is affected by small viscosity in a different way.
Since \(\sigma=\sqrt{2\varepsilon}\), the forward samples become highly concentrated as \(\varepsilon\) decreases, so the local subnetworks receive limited information about the spatial variation of the solution.
As DBDP2 reconstructs \(Z\) from spatial derivatives of the learned solution, this may lead to an underestimated \(Z\) and weaken the nonlinear transport effect.
The variant without gradient truncation remains accurate before and around shock formation, but develops a visible mismatch near the discontinuity in the post-shock regime.
This comparison illustrates the stabilizing role of gradient truncation in the coupled formulation.

\begin{figure}[tbp]
    \centering
    \includegraphics[width=0.8\linewidth]{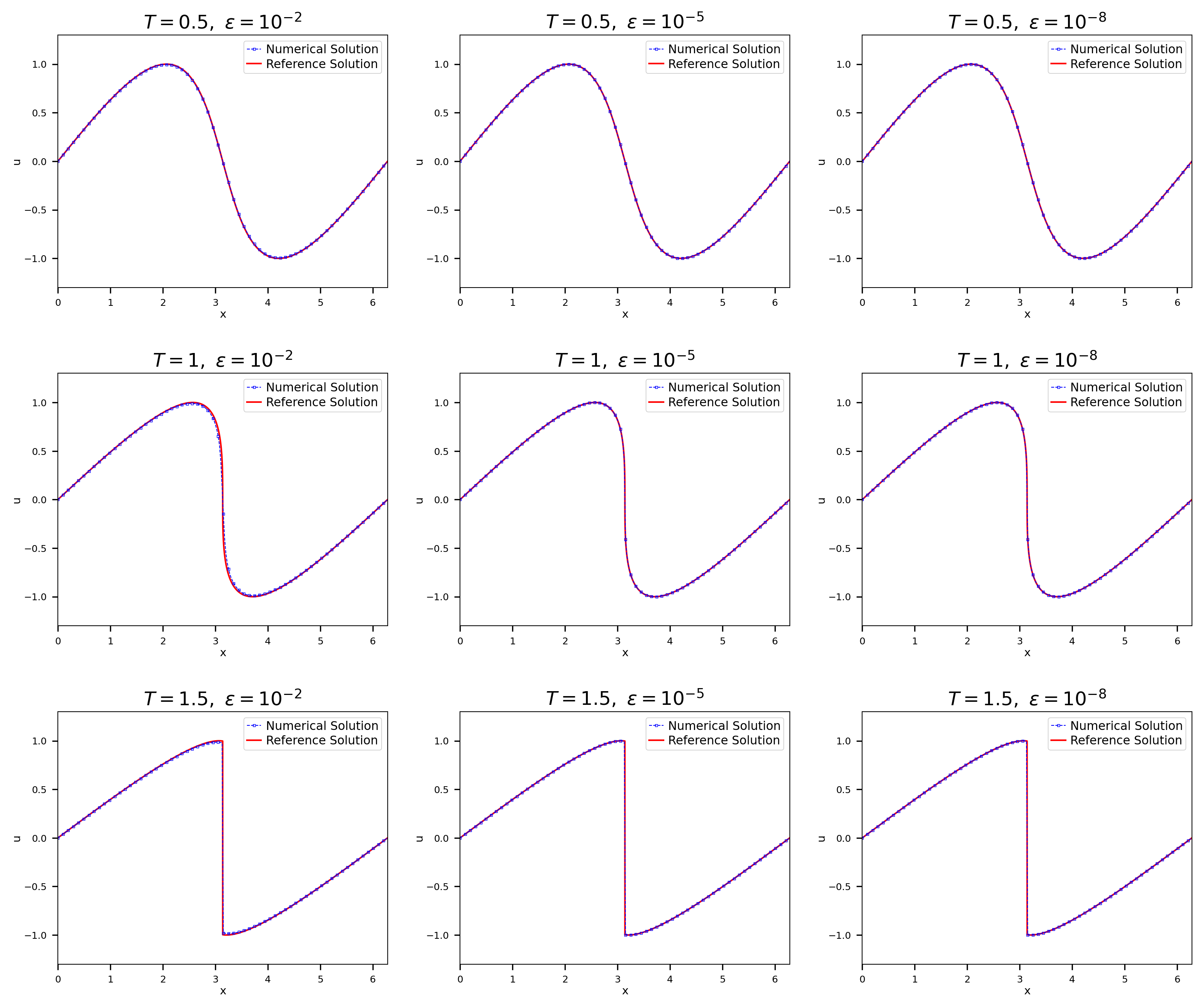}
    \caption{Vanishing-viscosity approximations obtained by the deep
    truncated FBSDE method for the 1D Burgers
    equation~\eqref{eq:Burgers_1D} using the coupled
    formulation~\eqref{eq:Form_Burgers_1D_coupled}.
    The rows correspond to \(T=0.5,1.0,1.5\), and the columns to
    \(\varepsilon=10^{-2},10^{-5},10^{-8}\).}
    \label{Fig:Algorithm_10_Burgers_1D}
\end{figure}

\begin{table}[tbp]
    \centering
    \caption{\(L^2\) and \(L^\infty\) errors of the deep truncated FBSDE
    method relative to the entropy solution of the inviscid 1D Burgers
    equation~\eqref{eq:Burgers_1D}.}
    \label{Tab:Error_eps_Algorithm_10_Burgers_1D_FBSDE}
    \footnotesize
    \setlength{\tabcolsep}{4pt}
    \begin{adjustbox}{max width=\linewidth}
        \begin{tabular}{c ccc ccc}
        \toprule
        \multirow{3}{*}{$\varepsilon$}
        & \multicolumn{3}{c}{$L^2$ error}
        & \multicolumn{3}{c}{$L^\infty$ error} \\
        \cmidrule(lr){2-4}\cmidrule(lr){5-7}
         & $T=0.5$ & $T=1.0$ & $T=1.5$
         & $T=0.5$ & $T=1.0$ & $T=1.5$ \\
         & $N=10$ & $N=20$ & $N=30$
         & $N=10$ & $N=20$ & $N=30$ \\
        \midrule
        $10^{-1}$ & 1.01E-01 & 3.43E-01 & 4.31E-01
                  & 7.46E-02 & 4.80E-01 & 9.61E-01 \\
        $10^{-2}$ & 1.07E-02 & 7.98E-02 & 1.33E-02
                  & 8.37E-03 & 2.67E-01 & 1.96E-02 \\
        $10^{-3}$ & 1.39E-03 & 4.78E-03 & 1.63E-03
                  & 2.08E-03 & 1.92E-02 & 1.72E-03 \\
        $10^{-4}$ & 4.04E-04 & 6.80E-04 & 2.95E-04
                  & 1.20E-03 & 3.03E-03 & 4.08E-04 \\
        $10^{-5}$ & 1.34E-04 & 1.89E-04 & 9.83E-05
                  & 3.37E-04 & 7.53E-04 & 1.44E-04 \\
        $10^{-6}$ & 2.78E-05 & 7.80E-05 & 2.23E-05
                  & 1.09E-04 & 4.27E-04 & 9.32E-05 \\
        $10^{-7}$ & 6.95E-06 & 1.62E-05 & 6.21E-06
                  & 1.68E-05 & 8.02E-05 & 9.69E-06 \\
        $10^{-8}$ & \textbf{3.28E-06} & \textbf{7.13E-06}
                  & \textbf{3.57E-06} & \textbf{3.77E-06}
                  & \textbf{2.94E-05} & \textbf{3.99E-06} \\
        \bottomrule
        \end{tabular}
    \end{adjustbox}
\end{table}

By contrast, Figure~\ref{Fig:Algorithm_10_Burgers_1D} shows that the deep truncated FBSDE method remains accurate in the pre-shock, shock-formation, and post-shock regimes.
In particular, the approximation remains accurate near the shock at the prescribed spatial evaluation points.
Table~\ref{Tab:Error_eps_Algorithm_10_Burgers_1D_FBSDE} further reports the discrete \(L^2\) and \(L^\infty\) errors relative to the entropy solution of the inviscid Burgers equation.
The errors decrease consistently as \(\varepsilon\) becomes smaller and remain small down to \(\varepsilon=10^{-8}\).
For \(T=1.5\), both errors reach the order of \(10^{-6}\), showing stable approximation of the entropy solution in the nearly inviscid regime.

\subsection[Example 2: 100-dimensional spatially heterogeneous Burgers-type equation]{Example 2: $100$-dimensional spatially heterogeneous Burgers-type equation}

This example extends the Burgers-type equation considered in Section~4.5 of Ref.~\refcite{MR3736669} by replacing the averaged linear term in the terminal condition with a coordinate-dependent oscillatory term, yielding spatially heterogeneous coefficients while preserving an explicit classical solution.
Let
\(
\Psi(x):=\frac{1}{d}\sum_{i=1}^{d}\bigl(a_i x_i+b_i\sin(c_i x_i)\bigr),
\)
where \(x=(x_1,\dots,x_d)\in\mathbb{R}^d\), and define
\(g(x)=\exp\bigl[T+\Psi(x)\bigr]\big/\bigl[1+\exp\bigl[T+\Psi(x)\bigr]\bigr].\)
We consider the spatially heterogeneous Burgers-type equation
\begin{lastnumbercases}\label{eq:heterogeneous_burgers_type_pde}
    \begin{aligned}[b]
        &u_t(t,x)
        +\dfrac{d^{2}}{2}\Delta u(t,x)+\Bigl[
        \dfrac{M(x)}{H(x)}u(t,x)
        -\dfrac{1+\frac d2 R(x)+\frac12 M(x)}{H(x)}
        \Bigr]
        \bigl(d\sum\limits_{i=1}^{d}u_{x_i}(t,x)\bigr)=0,\\
        &\hfill (t,x)\in[0,T)\times\mathbb{R}^d,\\[8pt]
        &u(T,x)=g(x),
        \qquad x\in\mathbb{R}^d,
    \end{aligned}
\end{lastnumbercases}
where \(H(x):=\sum_{i=1}^{d}\left(a_i+b_i c_i\cos(c_i x_i)\right)\), \(M(x):=\sum_{i=1}^{d}\left(a_i+b_i c_i\cos(c_i x_i)\right)^2\), and \(R(x):=-\sum_{i=1}^{d}b_i c_i^2\sin(c_i x_i)\).
For coefficients satisfying \(H(x)\neq0\), an explicit classical solution is
\(
u(t,x)=\exp\bigl[t+\Psi(x)\bigr]\big/\bigl[1+\exp\bigl[t+\Psi(x)\bigr]\bigr].
\)

For notational simplicity, set \(C(x,y):=M(x)y/H(x)-\bigl[1+dR(x)/2+M(x)/2\bigr]/H(x)\).
Since \(m=1\), we identify \(z\in\mathbb{R}^{1\times d}\) with \(z=(z^1,\dots,z^d)\in\mathbb{R}^d\).
For the decoupled representation, we keep the nonlinear convection term in the generator and set \(b=0\), while comparison of the second-order term in~\eqref{eq:heterogeneous_burgers_type_pde} gives \(\sigma=dI_d\).
By the nonlinear Feynman--Kac formula~\eqref{eq:nonlinear_feynman_kac}, \(y=u\) and \(z^i=d\,u_{x_i}\), so the convection term is represented by
\(
f(t,x,y,z)=C(x,y)\sum_{i=1}^{d}z^i.
\)
The corresponding decoupled FBSDE is
\begin{lastnumbercases}\label{eq:heterogeneous_burgers_type_decoupled_fbsde}
    X_t=x+\displaystyle\int_0^t dI_d\,\mathrm{d}W_s,\nonumber\\[6pt]
    Y_t=g(X_T)+\displaystyle\int_t^T
    C(X_s,Y_s)\sum\limits_{i=1}^{d}Z_s^i\,\mathrm{d}s
    -\displaystyle\int_t^T Z_s\,\mathrm{d}W_s.
\end{lastnumbercases}

Alternatively, the convection term can be absorbed into the forward drift by taking \(b(t,x,y,z)=C(x,y)d\,\mathbf{1}_d\) and \(f=0\), with \(\mathbf{1}_d=(1,\dots,1)^\top\in\mathbb{R}^d\).
The corresponding coupled FBSDE is
\begin{lastnumbercases}\label{eq:heterogeneous_burgers_type_coupled_fbsde}
    X_t=x+\displaystyle\int_0^t C(X_s,Y_s)d\,\mathbf{1}_d\,\mathrm{d}s
    +\displaystyle\int_0^t dI_d\,\mathrm{d}W_s,\nonumber\\[6pt]
    Y_t=g(X_T)-\displaystyle\int_t^T Z_s\,\mathrm{d}W_s.
\end{lastnumbercases}

In the decoupled formulation, approximation errors in \(Z\) directly affect the backward recursion through \(\sum_{i=1}^{d}Z^i\), which may increase the sensitivity to such errors in high dimension.
By contrast, the coupled formulation moves the nonlinear convection to the forward drift and gives \(f=0\), avoiding this explicit \(Z\)-dependence in the backward equation.
We therefore use the coupled formulation~\eqref{eq:heterogeneous_burgers_type_coupled_fbsde}.

For the numerical experiment, we choose \(a_i=1+i/d\), \(b_i=i/(2d)\), and \(c_i=i/d\) for \(i=1,\dots,d\).
Since \(a_i>|b_i c_i|\), we have \(H(x)>0\) for all \(x\in\mathbb{R}^d\).
We take \(d=100\), \(T=1\), \(x_0=(0,\dots,0)\), \(N=100\), and hidden-layer width \(110\).
In each independent run, the trainable initial value of \(Y\) is sampled independently from the uniform distribution on \([2,4]\).
The proposed method is trained for \(K=2000\) outer iterations, corresponding to at most \(10000\) parameter updates, with Adam learning rates \(5\times10^{-3}\) for the first \(1000\) outer iterations and \(5\times10^{-4}\) thereafter.
All reported statistics are computed from ten independent runs.

\begin{figure}[tbp]
    \centering

    \begin{minipage}[t]{0.45\textwidth}
        \centering
        \includegraphics[width=\linewidth]
        {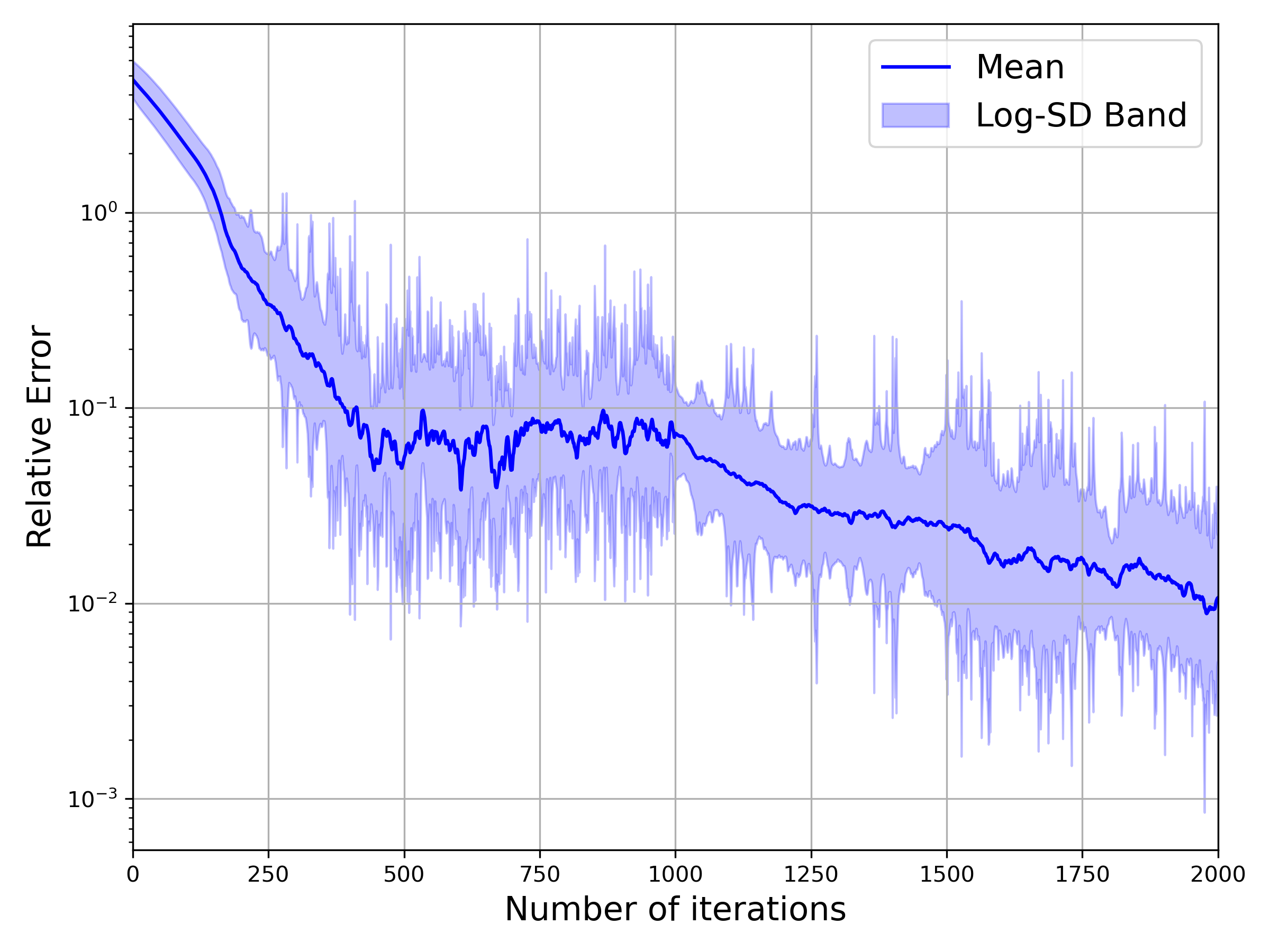}
    \end{minipage}\hspace{0.0\textwidth}\begin{minipage}[t]{0.45\textwidth}
        \centering
        \includegraphics[width=\linewidth]
        {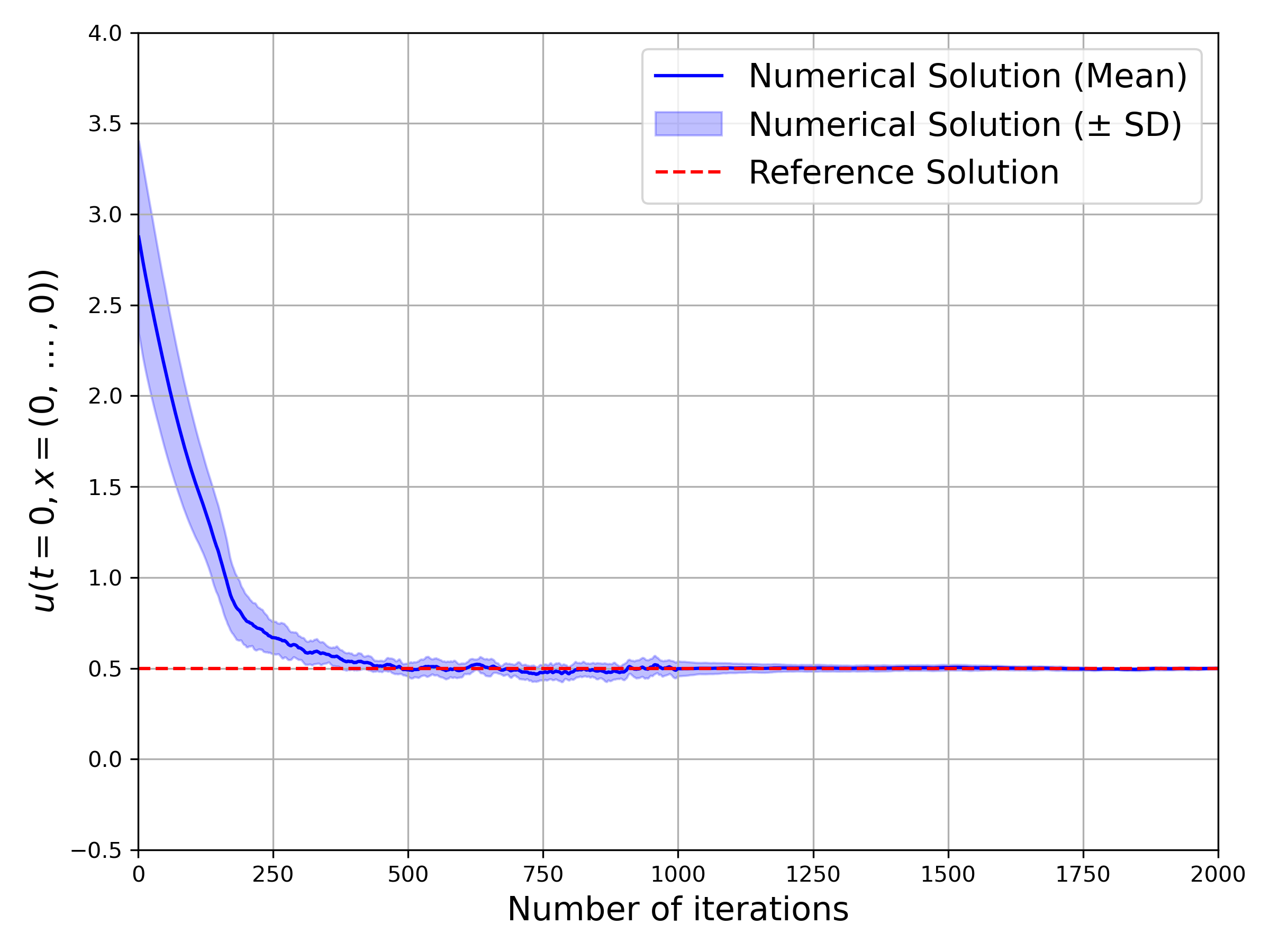}
    \end{minipage}

    \vspace{0.01cm}

    \caption{Deep truncated FBSDE method for the \(100\)-dimensional spatially heterogeneous Burgers-type equation~\eqref{eq:heterogeneous_burgers_type_pde}. Left: relative error over the training iterations, with the Log-SD Band indicating the standard deviation of log-transformed errors. Right: convergence of the numerical solution at \(t=0,\ x=(0,\dots,0)\). The results are based on ten independent runs.}
    \label{Fig:Algorithm_10_Heterogeneous_Burgers_Type_FBSDE}
\end{figure}

Figure~\ref{Fig:Algorithm_10_Heterogeneous_Burgers_Type_FBSDE} shows a rapid decrease in the relative error during the early stage of training, followed by a sustained overall decline.
The final mean relative error is \(1.07\times10^{-2}\), and the standard deviation of the relative errors across runs is \(7.85\times10^{-3}\), while the numerical solution has mean \(0.499310\) and standard deviation \(6.59\times10^{-3}\).
These results indicate that the proposed method achieves accurate approximation with a low mean relative error across independent runs.

For comparison, the deep BSDE method, the coupled deep BSDE method, and Algorithms 2 and 3 of Ji et al.\ use the same time discretization and network size, with \(10000\) training iterations and Adam learning rates \(5\times10^{-3}\) and \(5\times10^{-4}\) over the first and second halves of training, respectively.
For the DBDP methods, we use a fixed set of \(50\,000\) forward paths and a mini-batch size of \(1000\).
Each local subnetwork is trained for \(1000\) epochs, corresponding to \(50\,000\) Adam updates, with the same two-stage learning-rate schedule.
In our experiments, the deep BSDE method, the coupled deep BSDE method, and Algorithms 2 and 3 of Ji et al.\ do not provide reliable approximations under the corresponding decoupled or coupled formulations.
The DBDP methods are applicable only to the decoupled formulation in this example and exhibit some instability under the present setting, despite substantially heavier training.
In contrast, the deep truncated FBSDE method directly solves the coupled formulation and achieves a mean relative error close to \(1\%\) across the ten independent runs, showing improved accuracy for this high-dimensional spatially heterogeneous problem.

\subsection{Example 3: two-dimensional Allen--Cahn equation}

Consider the Allen--Cahn equation with double-well potential on the two-dimensional \(2\pi\)-periodic torus \(\mathbb T^2:=[0,2\pi)^2\)
\begin{lastnumbercases}\label{eq:allen-cahn-2d-forward}
        u_t(t,x)=\varepsilon^{2}\Delta u(t,x)+u(t,x)-[u(t,x)]^{3}, & (t,x)\in(0,T]\times\mathbb T^{2},\nonumber\\
        u(0,x)=u_{0}(x), & x\in\mathbb T^{2}.
\end{lastnumbercases}
Introducing the time reversal \(v(t,x)=u(T-t,x)\) gives
\begin{lastnumbercases}\label{eq:allen-cahn-2d-backward}
        v_t(t,x)+\varepsilon^{2}\Delta v(t,x)+v(t,x)-[v(t,x)]^{3}=0, & (t,x)\in[0,T)\times\mathbb T^{2},\nonumber\\
        v(T,x)=u_{0}(x), & x\in\mathbb T^{2}.
\end{lastnumbercases}
Comparing~\eqref{eq:allen-cahn-2d-backward} with the quasilinear parabolic form gives \(b=0\) and \(\sigma=\sqrt{2}\varepsilon I_2\), while the nonlinear Feynman--Kac formula gives \(f(t,x,y,z)=y-y^3\).
The associated decoupled FBSDE is
\[
\begin{cases}
    X_t=x+\displaystyle\int_0^t \sqrt{2}\,\varepsilon I_2\,\mathrm{d}W_s,\\[6pt]
    Y_t=u_0(X_T)+\displaystyle\int_t^T \bigl(Y_s-Y_s^3\bigr)\,\mathrm{d}s-\displaystyle\int_t^T Z_s\,\mathrm{d}W_s.
\end{cases}
\]
Since the nonlinearity depends on \(v\) but not on \(v_x\), all methods are tested using this decoupled formulation.

We consider two initial conditions.
The first is a radially symmetric ring centered at \((\pi,\pi)\),
\begin{equation}\label{eq:u0_allen_cahn_tanh}
    u_0(x_1,x_2)
    =
    \tanh\bigl(
    \frac{\sqrt{(x_1-\pi)^2+(x_2-\pi)^2}-2}{\varepsilon\sqrt{2}}
    \bigr),
\end{equation}
with a transition layer of thickness \(O(\varepsilon)\) near \(\sqrt{(x_1-\pi)^2+(x_2-\pi)^2}=2\).
The second is a superposition of seven compactly supported components,
\begin{equation}\label{eq:u0_allen_cahn_ball}
    u_0(x_1,x_2)
    =
    -1+\sum_{i=1}^{7}
    f_0\,(\sqrt{(x_1-x_1^i)^2+(x_2-x_2^i)^2}-r^i),\qquad
    f_0(s)=
    \begin{cases}
        2\,e^{-\varepsilon^{2}/s^{2}}, & s<0,\\[0.25em]
        0, & s\ge 0.
    \end{cases}
\end{equation}
The centers and radii are listed in Table~\ref{tab:allen_cahn_ball}.

\begin{table}[bp]
    \centering
    \caption{Centers \((x_1^i,x_2^i)\) and radii \(r^i\) for the initial condition~\eqref{eq:u0_allen_cahn_ball}.}
    \label{tab:allen_cahn_ball}
    \footnotesize
    \resizebox{0.56\linewidth}{!}{\begin{tabular}{c ccccccc}
        \toprule
        \textbf{\(i\)}
        & \textbf{1} & \textbf{2} & \textbf{3} & \textbf{4}
        & \textbf{5} & \textbf{6} & \textbf{7} \\
        \midrule
        \(x_1^i\)
        & \(\pi/2\)
        & \(\pi/4\)
        & \(\pi/2\)
        & \(\pi\)
        & \(3\pi/2\)
        & \(\pi\)
        & \(3\pi/2\) \\

        \(x_2^i\)
        & \(\pi/2\)
        & \(3\pi/4\)
        & \(5\pi/4\)
        & \(\pi/4\)
        & \(\pi/4\)
        & \(\pi\)
        & \(3\pi/2\) \\

        \(r^i\)
        & \(\pi/5\)
        & \(2\pi/15\)
        & \(2\pi/15\)
        & \(\pi/10\)
        & \(\pi/10\)
        & \(\pi/4\)
        & \(\pi/4\) \\
        \bottomrule
    \end{tabular}}
\end{table}

We take \(d=2\), \(\varepsilon=0.1\), \(T=0.3\), \(N=10\), and hidden-layer width \(10\), with the trainable initial value of \(Y\) initialized to \(0\).
The proposed method is trained for \(K=300\) iterations with Adam learning rate \(5\times10^{-3}\).
For visualization and error evaluation, all methods are evaluated independently at the points of a uniform \(256\times256\) grid over \([0,2\pi]^2\).
The reference solution is computed using a semi-implicit Fourier spectral method.

The deep BSDE method and Algorithms 2 and 3 of Ji et al.\ use the same time discretization, network size, and learning rate as the proposed method.
For the DBDP methods, we use \(10\,000\) precomputed forward paths and a mini-batch size of \(1000\).
Each local subnetwork is trained for \(30\) epochs, corresponding to \(300\) Adam updates, with learning rate \(5\times10^{-3}\).

\begin{figure}[tbp]
    \centering
    \includegraphics[width=\linewidth]{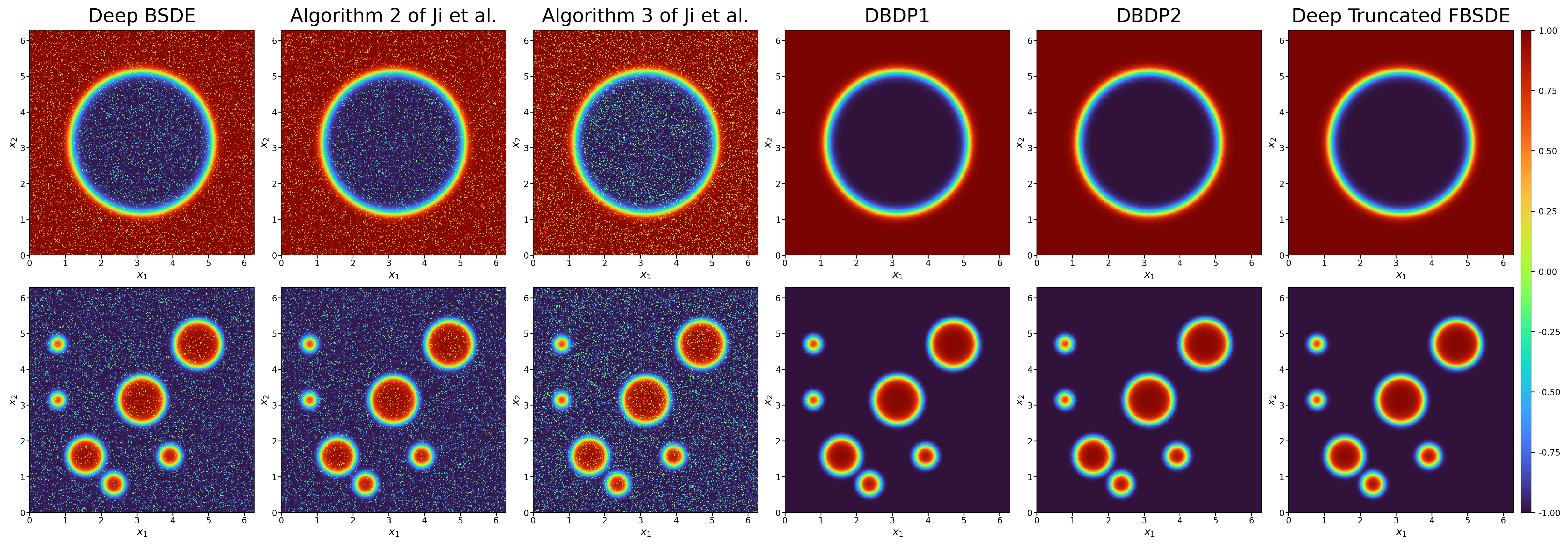}
    \caption{Comparison of numerical solutions for the 2D Allen--Cahn equation~\eqref{eq:allen-cahn-2d-forward}. The first and second rows correspond to the ring initial condition~\eqref{eq:u0_allen_cahn_tanh} and the seven-disk initial condition~\eqref{eq:u0_allen_cahn_ball}, respectively. From left to right, the columns show the deep BSDE method, Algorithms 2 and 3 of Ji et al., DBDP1, DBDP2, and the deep truncated FBSDE method.}
    \label{Fig:Comparison_Allen_Cahn_2D}
\end{figure}

Figure~\ref{Fig:Comparison_Allen_Cahn_2D} shows that the deep BSDE method and Algorithms 2 and 3 of Ji et al.\ exhibit some numerical instability, whereas DBDP1 and DBDP2 recover the main phase structures with heavier training.
By contrast, the deep truncated FBSDE method accurately captures the interfacial structures for both initial conditions.
Table~\ref{Tab:Comparisons_Allen_Cahn_2D_Initial_Conditions} shows that it also achieves the smallest relative \(L^2\) and \(L^\infty\) errors among the reported methods.
Since this example is decoupled, these results indicate that the nonlinear Feynman--Kac reconstruction and pathwise consistency optimization can improve accuracy and stability independently of forward--backward coupling.

\begin{table}[tbp]
    \centering
    \caption{Comparison of the relative \(L^2\) and \(L^\infty\) errors for the 2D Allen--Cahn equation~\eqref{eq:allen-cahn-2d-forward} under the ring initial condition~\eqref{eq:u0_allen_cahn_tanh} and the seven-disk initial condition~\eqref{eq:u0_allen_cahn_ball}.}
    \label{Tab:Comparisons_Allen_Cahn_2D_Initial_Conditions}
    \footnotesize
    \setlength{\tabcolsep}{5pt}
    \begin{adjustbox}{max width=\linewidth}\begin{tabular}{lcccc}
            \toprule
            \multirow{2}{*}{\textbf{Method}}
            & \multicolumn{2}{c}{\textbf{Ring initial condition}}
            & \multicolumn{2}{c}{\textbf{Seven-disk initial condition}} \\
            \cmidrule(lr){2-3} \cmidrule(lr){4-5}
            & Rel. $L^2$ error
            & Rel. $L^\infty$ error
            & Rel. $L^2$ error
            & Rel. $L^\infty$ error \\
            \midrule
            DBDP1
            & 1.03E-02 & 1.36E-01 & 1.18E-02 & 1.46E-01 \\
            DBDP2
            & 1.12E-02 & 1.45E-01 & 1.32E-02 & 1.57E-01 \\
            Deep truncated FBSDE
            & \textbf{4.54E-03} & \textbf{4.76E-02} & \textbf{7.44E-03} & \textbf{8.21E-02} \\
            \bottomrule
        \end{tabular}\end{adjustbox}
\end{table}

\subsection[Example 4: 100-dimensional Allen--Cahn equations]{Example 4: $100$-dimensional Allen--Cahn equations}

In this subsection, we consider two \(100\)-dimensional Allen--Cahn equations with double-well and logarithmic potentials.
Both problems admit decoupled FBSDE representations, and all methods are tested in this setting.
All reported statistics are computed from ten independent runs.

The common setting is \(d=100\), \(m=1\), \(\varepsilon=0.1\), \(T=1\), \(N=100\), and hidden-layer width \(110\).
For all methods except DBDP, the trainable initial value of \(Y\) is independently initialized for each run from the uniform distribution on \([-0.1,0.1]\), and the models are trained for \(K=1000\) iterations with Adam learning rates \(5\times10^{-3}\) and \(5\times10^{-4}\) in the first and second halves, respectively.
For the DBDP methods, we use \(10\,000\) forward paths with a mini-batch size \(1000\).
Each local subnetwork is trained for \(100\) epochs, corresponding to \(1000\) Adam updates, with learning rates \(5\times10^{-3}\) and \(5\times10^{-4}\) in the first and second halves, respectively.
Training times are reported in seconds.

\noindent\textbf{Double-well potential.}
We first consider the Allen--Cahn equation with a double-well potential
\begin{lastnumbercases}\label{eq:allen-cahn-100d-double}
        u_t(t,x)+\varepsilon^2\Delta u(t,x)+u(t,x)-[u(t,x)]^3=0,
        & t\in[0,T),\nonumber\\
        u(T,x)=g(x),
        & x\in\mathbb{R}^{d}.
\end{lastnumbercases}
The terminal condition is
\begin{equation*}
    g(x)
    =
    \frac{1}{2+\frac1d\sum_{i=1}^{d}a_i x_i^2}
    \exp\bigl[
    -\frac1{d-1}\sum_{i=1}^{d-1}b_i(x_{i+1}-x_i)^2
    \bigr],
\end{equation*}
where \(a_i=i\) and \(b_i=\ln(1+i)\).
Compared with the standard benchmark \(g(x)=1/(2+0.4\|x\|^2)\) used in Ref.~\refcite{MR3847747}, this terminal condition introduces richer spatial heterogeneity and interactions between neighboring components.

Comparing~\eqref{eq:allen-cahn-100d-double} with the quasilinear parabolic form gives \(b=0\), \(\sigma=\sqrt{2}\varepsilon I_d\), and \(f(t,x,y,z)=y-y^3\).
The associated decoupled FBSDE is
\begin{equation*}
    \begin{cases}
        X_t=x+\displaystyle\int_0^t \sqrt{2}\,\varepsilon I_d\,\mathrm{d}W_s,\\[6pt]
        Y_t=g(X_T)+\displaystyle\int_t^T
        (Y_s-Y_s^3)\,\mathrm{d}s
        -\displaystyle\int_t^T Z_s\,\mathrm{d}W_s.
    \end{cases}
\end{equation*}
For the numerical comparisons, we estimate the reference value at \(t=0\), \(x=(0,\dots,0)\) by the branching diffusion method~\cite{MR3138609} using \(M=10^9\) Monte Carlo samples, obtaining \(0.6330577\).

\begin{figure}[tbp]
    \centering

    \includegraphics[width=0.45\textwidth]
    {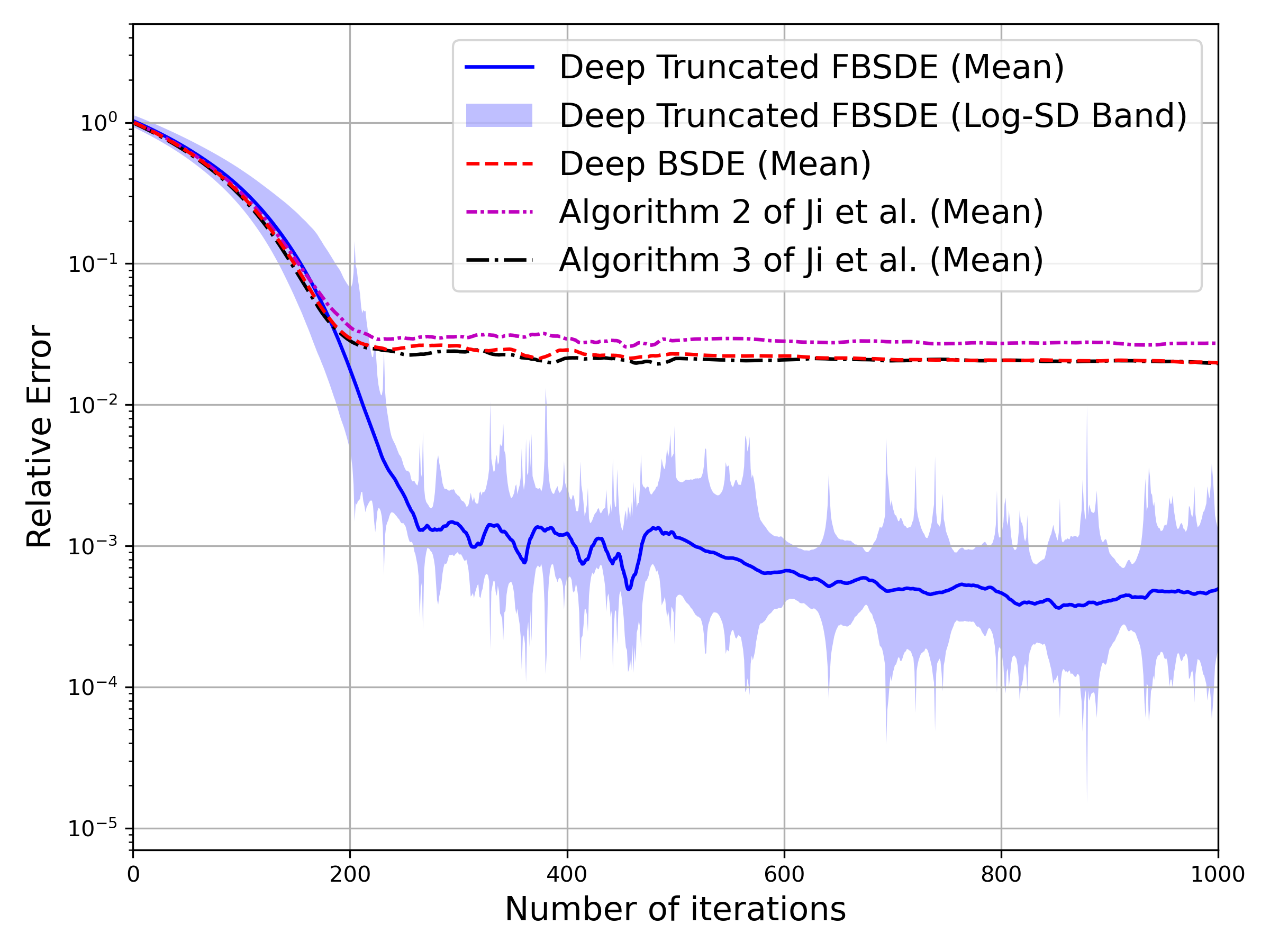}
    \hspace{0.0\textwidth}
    \includegraphics[width=0.45\textwidth]
    {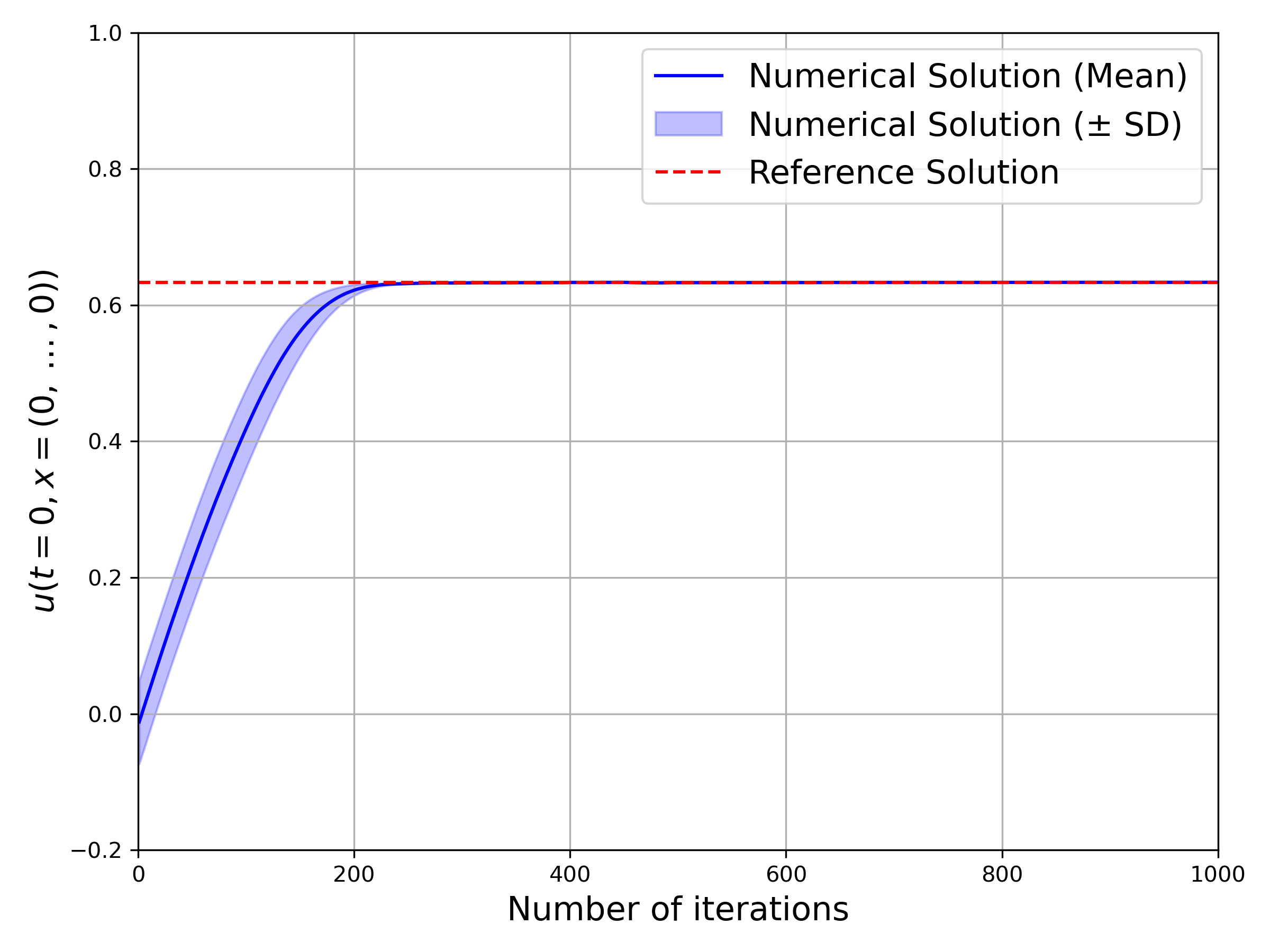}

    \vspace{0.6em}

    \includegraphics[width=0.45\textwidth]
    {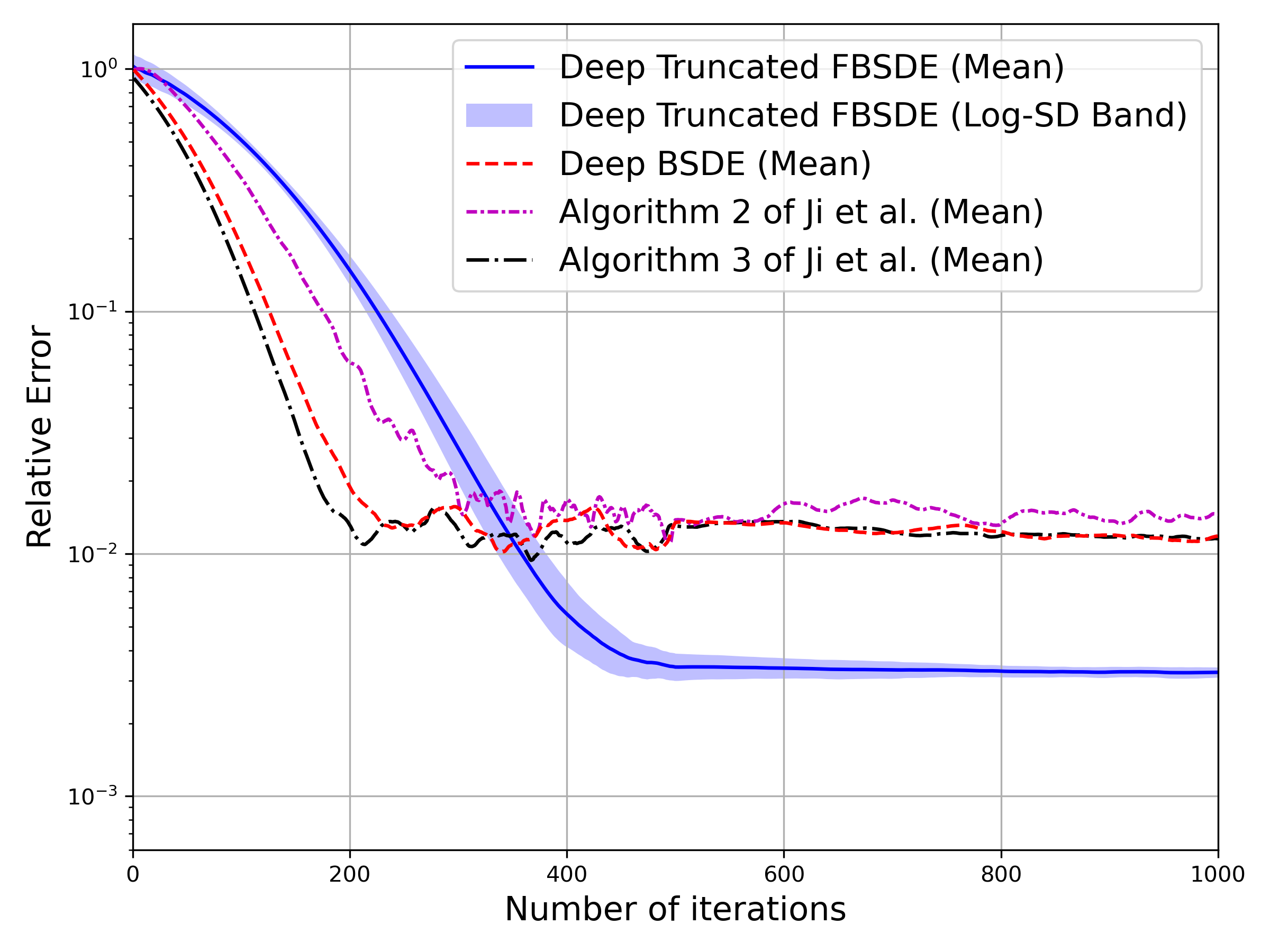}
    \hspace{0.0\textwidth}
    \includegraphics[width=0.45\textwidth]
    {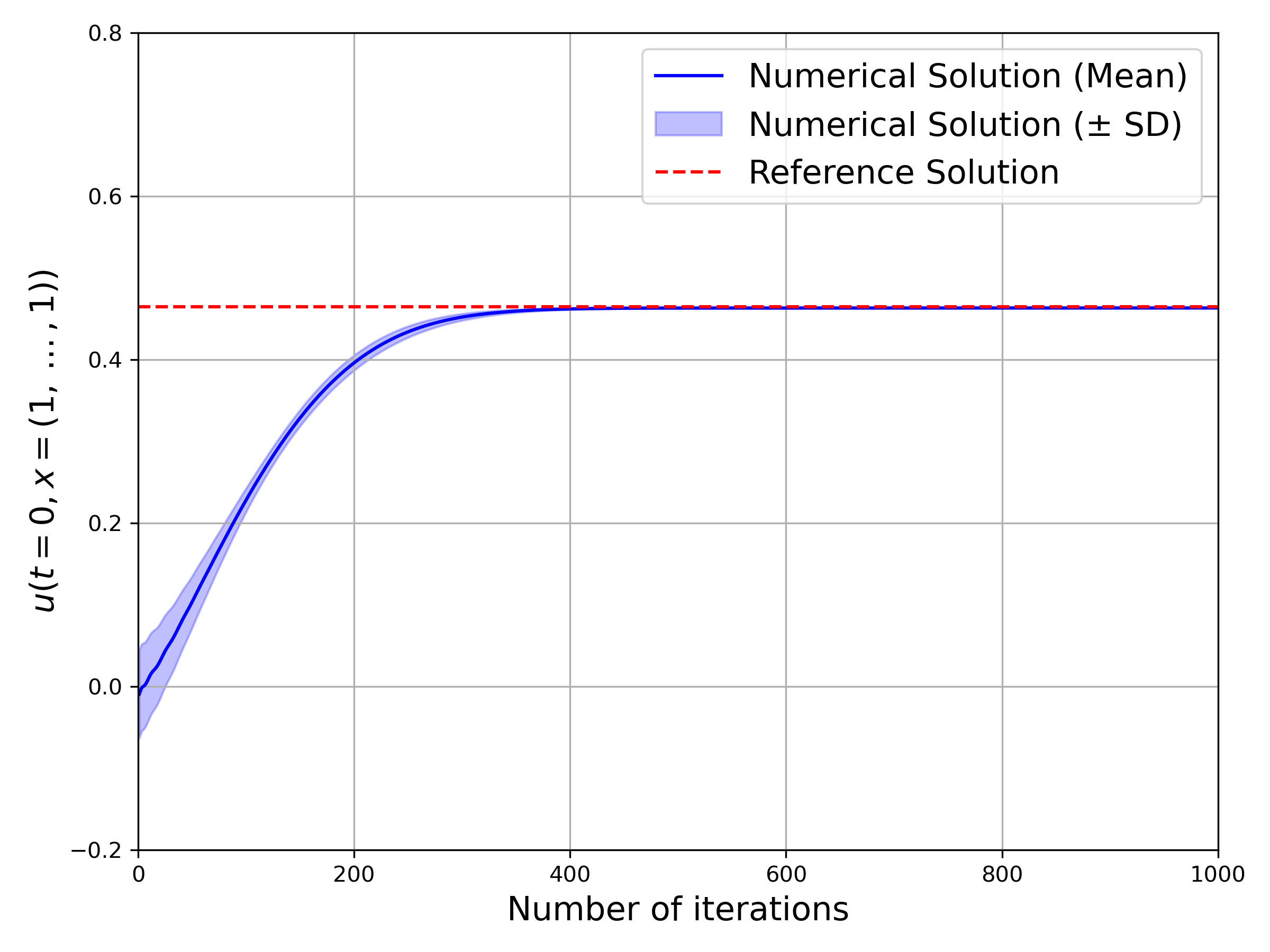}

    \caption{Comparison for the two 100-dimensional Allen--Cahn equations.
    The first row corresponds to the double-well potential~\eqref{eq:allen-cahn-100d-double}
    at \(t=0,\ x=(0,\ldots,0)\), and the second row to the logarithmic
    potential~\eqref{eq:allen-cahn-100d-log} at \(t=0,\ x=(1,\ldots,1)\).
    In each row, the left panel shows the relative errors of different methods,
    and the right panel shows the convergence of the deep truncated FBSDE solution.}
    \label{Fig:Allen_Cahn_100D}
\end{figure}

\noindent\textbf{Logarithmic potential.}
We next consider the Allen--Cahn equation with logarithmic potential
\begin{lastnumbercases}\label{eq:allen-cahn-100d-log}
        u_t(t,x)+\varepsilon^2\Delta u(t,x)
        +\theta_c u(t,x)
        -\dfrac{\theta}{2}\ln\dfrac{1+u(t,x)}{1-u(t,x)}
        -h(t,x)=0,
        & t\in[0,T),\nonumber\\
        u(T,x)=g(x),
        & x\in\mathbb{R}^{d},
\end{lastnumbercases}
where \(0<\theta<\theta_c\).
To enable exact error evaluation, we add a forcing term \(h(t,x)\) and prescribe the exact solution
\begin{equation*}
    u(t,x)
    =
    \lambda
    \exp\bigl[
        \frac12\cos (T-t)
        -
        \frac1d\sum_{i=1}^{d}a_i x_i^2
    \bigr]
    \bigl[
        \frac1d\sum_{i=1}^{d}\cos(b_i x_i)
    \bigr].
\end{equation*}
The terminal condition is given by \(g(x)=u(T,x)\), and the forcing term \(h(t,x)\) is obtained analytically by substitution.
The corresponding decoupled FBSDE has
\(b=0\), \(\sigma=\sqrt{2}\varepsilon I_d\), and
\(
f(t,x,y,z)
=
\theta_c y
-\frac{\theta}{2}\ln\frac{1+y}{1-y}
-h(t,x),
\)
and is given by
\begin{equation*}
    \begin{cases}
        X_t=x+\displaystyle\int_0^t\sqrt{2}\,\varepsilon I_d\,\mathrm{d}W_s,\\[6pt]
        Y_t=g(X_T)
        +\displaystyle\int_t^T
        \bigl[
        \theta_cY_s
        -\dfrac{\theta}{2}\ln\dfrac{1+Y_s}{1-Y_s}
        -h(s,X_s)
        \bigr]\,\mathrm{d}s
        -\displaystyle\int_t^T Z_s\,\mathrm{d}W_s.
    \end{cases}
\end{equation*}
We take \(\theta_c=1\), \(\theta=1/4\), \(\lambda=1/2\), \(a_i=i/(3d)\), and \(b_i=i/d\) for \(i=1,\dots,d\).
The numerical solution is evaluated at \(t=0\), \(x=(1,\dots,1)\) rather than at \(x=(0,\dots,0)\), so that the reference value is taken at a nontrivial point.

\begin{table}[tbp]
    \centering
    \caption{Comparison of the numerical solution, relative error, and
    training time for the two $100$-dimensional Allen--Cahn equations,
    based on ten independent runs.}
    \label{Tab:Comparisons_Allen_Cahn_100D}
    \footnotesize
    \setlength{\tabcolsep}{4.5pt}\begin{tabular}{lcccccc}
            \toprule
            \multirow{2}{*}{\textbf{Method}}
            & \multicolumn{2}{c}{\textbf{Numerical Solution}}
            & \multicolumn{2}{c}{\textbf{Relative Error}}
            & \multicolumn{2}{c}{\textbf{Training Time}}\\
            \cmidrule(lr){2-3}\cmidrule(lr){4-5}\cmidrule(lr){6-7}
            & Mean & Std. Dev.
            & Mean & Std. Dev.
            & Mean & Std. Dev.\\
            \midrule

            \multicolumn{7}{l}{\textbf{Double-well potential}}\\
            Deep BSDE
            & 0.620526 & 7.37E-04
            & 1.98E-02 & 1.16E-03
            & 258.65 & 2.57\\
            Algorithm 2 of Ji et al.
            & 0.615803 & 9.52E-04
            & 2.73E-02 & 1.50E-03
            & 393.52 & 2.11\\
            Algorithm 3 of Ji et al.
            & 0.620638 & 8.47E-04
            & 1.96E-02 & 1.34E-03
            & 235.35 & 1.75\\
            DBDP1
            & 0.632918 & 1.15E-03
            & 1.46E-03 & 1.11E-03
            & 531.68 & 0.38\\
            DBDP2
            & 0.634921 & 1.46E-03
            & 3.15E-03 & 2.03E-03
            & 733.45 & 0.31\\
            Deep truncated FBSDE
            & 0.633371 & \textbf{2.23E-04}
            & \textbf{4.95E-04} & \textbf{3.52E-04}
            & 244.08 & 2.75\\

            \midrule
            \multicolumn{7}{l}{\textbf{Logarithmic potential}}\\
            Deep BSDE
            & 0.459049 & 1.07E-03
            & 1.19E-02 & 2.31E-03
            & 352.93 & 2.89\\
            Algorithm 2 of Ji et al.
            & 0.457799 & 1.67E-03
            & 1.45E-02 & 3.60E-03
            & 480.91 & 6.77\\
            Algorithm 3 of Ji et al.
            & 0.459194 & 1.18E-03
            & 1.15E-02 & 2.55E-03
            & 316.54 & 3.86\\
            DBDP1
            & 0.455162 & 3.71E-03
            & 2.02E-02 & 7.98E-03
            & 688.14 & 9.54\\
            DBDP2
            & 0.462486 & 3.37E-03
            & 7.35E-03 & 4.27E-03
            & 847.79 & 6.31\\
            Deep truncated FBSDE
            & 0.463047 & \textbf{7.14E-05}
            & \textbf{3.24E-03} & \textbf{1.54E-04}
            & 339.10 & 1.88\\
            \bottomrule
        \end{tabular}
\end{table}

Figure~\ref{Fig:Allen_Cahn_100D} and Table~\ref{Tab:Comparisons_Allen_Cahn_100D} show that the deep truncated FBSDE method achieves the lowest mean relative error in both tests, together with the smallest standard deviations in the numerical solution and relative error, while maintaining training times comparable to the fastest methods.
For the double-well problem, its mean relative error reaches \(4.95\times10^{-4}\), compared with errors of order \(10^{-2}\) for the deep BSDE method and Algorithms 2 and 3 of Ji et al.\ and \(10^{-3}\) for the DBDP methods.
For the logarithmic-potential problem, although the relative error decreases more slowly at the beginning of training, it continues to decrease steadily and reaches \(3.24\times10^{-3}\), compared with \(7.35\times10^{-3}\) for DBDP2 and values above \(10^{-2}\) for the other competing methods.
These results demonstrate that the proposed method remains accurate, stable, and computationally competitive across different nonlinear potentials in high-dimensional decoupled settings, beyond its primary focus on coupled problems.

\subsection{Example 5: gene flow model}

The gene flow model describes the evolution of a trait proportion under diffusion, reaction, and transport induced by population-density gradients~\cite{MR4697915}.
Let \(\rho(t,x)\in[0,1]\) denote the proportion of the population carrying the trait.
Assume that the total population density is described by \(\mathcal N(x,\rho)\), depending on both the spatial location and the trait proportion.
We consider
\begin{lastnumbercases}\label{eq:gene_flow_pde_forward}
    \displaystyle
    \rho_t
    -A\Delta\rho
    -2A\left\langle\nabla\ln\mathcal N(x,\rho),\nabla\rho\right\rangle
    =
    r(\rho),
    &(t,x)\in(0,T]\times\mathbb{R}^2,\nonumber\\[3pt]
    \rho(0,x)=g(x),
    &x\in\mathbb{R}^2,
\end{lastnumbercases}
where \(A>0\).
Applying the time reversal \(u(t,x)=\rho(T-t,x)\) and the chain rule yields
\begin{lastnumbercases}\label{eq:gene_flow_pde_backward}
    \displaystyle
    u_t+A\Delta u
    +2A\bigl\langle
    \frac{\nabla_x\mathcal N(x,u)}{\mathcal N(x,u)},
    \nabla u
    \bigr\rangle
    +2A
    \frac{\mathcal N_\rho(x,u)}
    {\mathcal N(x,u)}
    |\nabla u|^2
    +r(u)=0,
    &(t,x)\in[0,T)\times\mathbb{R}^2,\nonumber\\[3pt]
    u(T,x)=g(x),
    &x\in\mathbb{R}^2.
\end{lastnumbercases}

Following the Wolbachia invasion model in Ref.~\refcite{MR3561787}, we apply the gene flow model to mosquito populations.
We take \(\mathcal N(x,\rho)=E(x)G(\rho)\), where
\(E(x)=1+A_E\exp\bigl[-|x-c_E|^2/(2\sigma_E^2)\bigr]\)
describes spatial environmental capacity and
\(G(\rho)=b_u\bigl[s_h\rho^2-(s_f+s_h)\rho+1\bigr]\)
models the trait-dependent reproductive capacity.
Here \(b_u\) denotes the fecundity of uninfected mosquitoes, while \(s_f\) and \(s_h\) represent the fitness cost of infection and the strength of cytoplasmic incompatibility, respectively.
The reaction term is chosen as
\(r(\rho)=\rho(1-\rho)(\rho-\theta)\).
This bistable reaction gives rise to the so-called \textit{Allee effect}, where a trait proportion above the threshold \(\theta\) tends to increase and invade the domain, while a trait proportion below \(\theta\) tends to decay.
Thus, \(\theta\in(0,1)\) represents the critical threshold for successful invasion.

The terminal condition is generated by localized releases, with \(\phi(x)=\sum_{i=1}^{5}\alpha_i\exp\bigl[-|x-c_i|^2/(2\eta_i^2)\bigr]\) and \(g(x)=\phi(x)/[1+\phi(x)]\).
Here \(\phi\) denotes the ratio of released trait-carrying individuals to the remaining population, and \(g=\phi/(1+\phi)\) maps it to the admissible range \([0,1)\).
The release parameters are listed in Table~\ref{tab:geneflow_release_parameters}.

\begin{table}[bp]
    \centering
    \caption{Release centers, amplitudes, and widths of the five localized Gaussian components in the terminal condition.}
    \label{tab:geneflow_release_parameters}
    \footnotesize
    \resizebox{0.5\linewidth}{!}{\begin{tabular}{c ccccc}
            \toprule
            \(i\)
            & 1 & 2 & 3 & 4 & 5 \\
            \midrule
            \(c_i\)
            & \((0,0)\)
            & \((6,6)\)
            & \((6,-6)\)
            & \((-6,6)\)
            & \((-6,-6)\) \\
            \(\alpha_i\)
            & \(4.00\)
            & \(0.50\)
            & \(2.00\)
            & \(1.50\)
            & \(1.00\) \\
            \(\eta_i\)
            & \(1.80\)
            & \(1.40\)
            & \(1.00\)
            & \(0.60\)
            & \(1.00\) \\
            \bottomrule
        \end{tabular}}
\end{table}

For simplicity, define \(a(x,\rho):=\nabla_x\mathcal N(x,\rho)/\mathcal N(x,\rho)=\nabla E(x)/E(x)\) and \(q(x,\rho):=\mathcal N_\rho(x,\rho)/\mathcal N(x,\rho)=G'(\rho)/G(\rho)\).
Since \(m=1\), we identify \(z\in\mathbb{R}^{1\times2}\) with \(z\in\mathbb{R}^2\).
The decoupled formulation corresponding to~\eqref{eq:gene_flow_pde_backward} has
\(b=0\), \(\sigma=\sqrt{2A}I_2\), and
\(f(t,x,y,z)=r(y)+\sqrt{2A}\langle a(x,y),z\rangle+q(x,y)|z|^2\),
where \(z=\sqrt{2A}\nabla u\) by the nonlinear Feynman--Kac formula~\eqref{eq:nonlinear_feynman_kac}.
The associated decoupled FBSDE is
\begin{lastnumbercases}\label{eq:gene_flow_decoupled_fbsde}
    \displaystyle
    X_t=x+\int_0^t\sqrt{2A}I_2\,\mathrm{d}W_s,\nonumber\\[6pt]
    \displaystyle
    Y_t=g(X_T)+
    \int_t^T
    \bigl[
    r(Y_s)
    +\sqrt{2A}\langle a(X_s,Y_s),Z_s\rangle
    +q(X_s,Y_s)|Z_s|^2
    \bigr]\,\mathrm{d}s
    -\int_t^T Z_s\,\mathrm{d}W_s.
\end{lastnumbercases}

\begin{figure}[tbp]
    \centering
    \includegraphics[width=\linewidth]{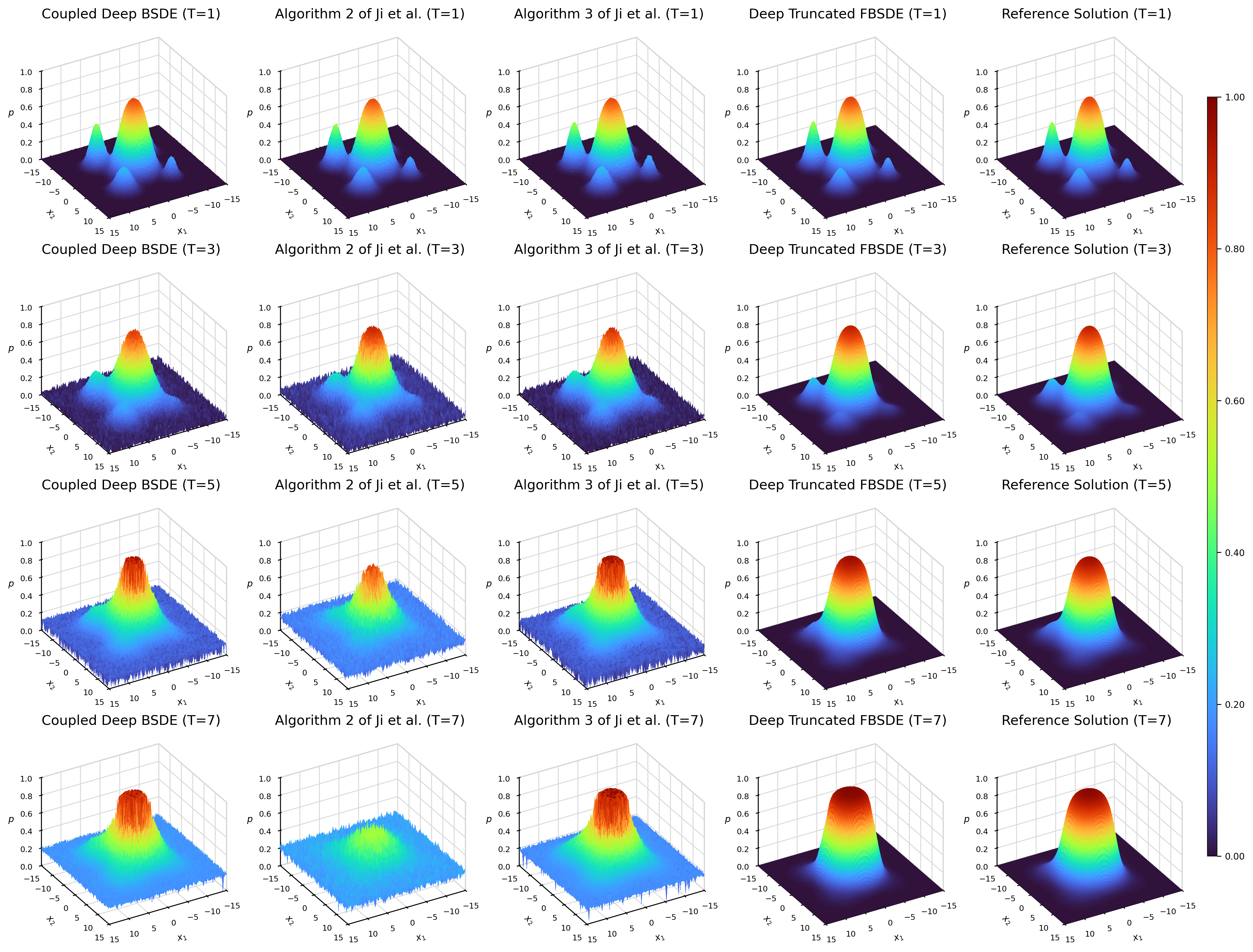}
    \caption{Comparison of numerical solutions for the 2D gene flow equation~\eqref{eq:gene_flow_pde_forward} with the coupled FBSDE formulation~\eqref{eq:gene_flow_coupled_fbsde}. The rows correspond to \(T=1,3,5,7\), respectively. 
    From left to right, the columns show the coupled deep BSDE method, Algorithms 2 and 3 of Ji et al., the deep truncated FBSDE method, and the reference solution.}
    \label{Fig:Comparison_GeneFlow_2D}
\end{figure}

Alternatively, the gene flow terms can be incorporated into the forward drift.
This gives
\(b(t,x,y,z)=2A\,a(x,y)+\sqrt{2A}\,q(x,y)z\),
\(\sigma=\sqrt{2A}I_2\), and
\(f(t,x,y,z)=r(y)\).
The corresponding coupled FBSDE is
\begin{lastnumbercases}\label{eq:gene_flow_coupled_fbsde}
    \displaystyle
    X_t=x+\int_0^t
    \bigl[
    2A\,a(X_s,Y_s)
    +\sqrt{2A}\,q(X_s,Y_s)Z_s
    \bigr]\,\mathrm{d}s
    +\int_0^t\sqrt{2A}I_2\,\mathrm{d}W_s,\nonumber\\[6pt]
    \displaystyle
    Y_t=g(X_T)+\int_t^T r(Y_s)\,\mathrm{d}s
    -\int_t^T Z_s\,\mathrm{d}W_s.
\end{lastnumbercases}

The two formulations are equivalent at the PDE level.
However, the decoupled formulation contains the quadratic term
\(q(x,y)|z|^2\) in the generator, making the backward approximation more sensitive to errors in \(Z\).
The coupled formulation removes this quadratic dependence from the generator and treats it through the forward dynamics.
Therefore, we use the coupled formulation~\eqref{eq:gene_flow_coupled_fbsde} in the numerical experiment.

For the numerical experiment, we take \(A=0.5\), \(A_E=5.0\), \(\sigma_E=3.0\), \(c_E=(0,0)\), \(b_u=1.12\), \(s_f=0.1\), \(s_h=0.8\), and \(\theta=0.2\).
The numerical solutions are evaluated on a uniform \(101\times101\) grid over \([-15,15]^2\).
Since \(\rho\) represents a proportion, we apply the same limiter
\(\Pi_{[0,1]}(\rho)=\min\{1,\max\{0,\rho\}\}\)
to the numerical solutions of all methods.
The reference solution is obtained by a fourth-order Runge--Kutta finite-difference computation on an enlarged computational domain.
Time is measured in days.
For \(T=1,3,5,7\), we use \(N=10,30,50,70\), respectively, so that \(\Delta t=0.1\).
The hidden-layer width is \(10\), and the trainable initial value of \(Y\) is initialized to \(0\).
The proposed method is trained for \(K=400\) outer iterations, with Adam learning rates \(5\times10^{-3}\) and \(5\times10^{-4}\) over the first and second halves, respectively.
The competing methods use \(2000\) training iterations with the corresponding two-stage learning-rate schedule.
We use early stopping with tolerance \(5\times10^{-4}\) and patience \(100\) Adam updates.

\begin{table}[tbp]
    \centering
    \caption{Comparison of the relative \(L^2\) and \(L^\infty\) errors for the 2D gene flow equation at different terminal times.}
    \label{Tab:Comparisons_GeneFlow_2D}
    \footnotesize
    \setlength{\tabcolsep}{3pt}
    \begin{adjustbox}{max width=\linewidth}\begin{tabular}{lcccccccc}
            \toprule
            \multirow{2}{*}{\textbf{Method}}
            & \multicolumn{4}{c}{\textbf{Rel. \(L^2\) error}}
            & \multicolumn{4}{c}{\textbf{Rel. \(L^\infty\) error}} \\
            \cmidrule(lr){2-5}\cmidrule(lr){6-9}
            & \(T=1\)&\(T=3\)&\(T=5\)&\(T=7\)
            & \(T=1\)&\(T=3\)&\(T=5\)&\(T=7\)\\
            \midrule
            Coupled Deep BSDE
            &4.56E-02&2.59E-01&5.21E-01&6.62E-01
            &4.44E-02&2.09E-01&3.50E-01&4.36E-01\\
            Algorithm 2 of Ji et al.
            &5.07E-02&3.31E-01&6.78E-01&7.90E-01
            &6.14E-02&2.50E-01&4.95E-01&6.87E-01\\
            Algorithm 3 of Ji et al.
            &5.04E-02&2.53E-01&4.94E-01&6.35E-01
            &5.74E-02&1.94E-01&3.35E-01&4.39E-01\\
            Deep truncated FBSDE
            &\textbf{8.44E-03}&\textbf{1.28E-02}&\textbf{1.83E-02}&\textbf{4.39E-02}
            &\textbf{2.02E-02}&\textbf{1.84E-02}&\textbf{2.44E-02}&\textbf{5.45E-02}\\
            \bottomrule
        \end{tabular}\end{adjustbox}
\end{table}

Figure~\ref{Fig:Comparison_GeneFlow_2D} compares the numerical solutions with the reference solutions.
For \(T=1\), the coupled deep BSDE method, Algorithms 2 and 3 of Ji et al., and the deep truncated FBSDE method capture the main spatial profile.
As the terminal time increases, the coupled deep BSDE method and Algorithms 2 and 3 of Ji et al.\ gradually deteriorate.
In contrast, the deep truncated FBSDE method remains close to the reference solutions and accurately captures the expansion and merging of the release regions.
As shown in Table~\ref{Tab:Comparisons_GeneFlow_2D}, the deep truncated FBSDE method achieves the smallest relative \(L^2\) and \(L^\infty\) errors in all tested cases, with much slower degradation over long horizons.
These results indicate improved long-time stability and robustness of the proposed method for coupled FBSDE problems.

\subsection{Example 6: parabolic-parabolic Keller--Segel system}

We consider the classical parabolic-parabolic Keller--Segel system describing the interaction between cell density \(u\) and chemical concentration \(v\) through chemotaxis
\begin{lastnumbercases}\label{eq:keller_segel}
        u_t=\Delta u-\chi\nabla\cdot(u\nabla v),\nonumber\\
        v_t=\Delta v-v+u,
    \qquad (t,x)\in(0,T]\times\mathbb{R}^2,
\end{lastnumbercases}
where \(\chi>0\) is the chemotactic sensitivity.
The initial conditions are \(u(0,x)=500e^{-50|x|^2}\) and \(v(0,x)=10e^{-60|x|^2}\).
Introducing the time reversal
\(U(t,x)=u(T-t,x)\) and \(V(t,x)=v(T-t,x)\),
and setting \(w=\nabla V=(w^1,w^2)^\top\), we obtain
\[
\begin{cases}
    U_t+\Delta U-\chi\nabla\cdot(Uw)=0,\\
    V_t+\Delta V-V+U=0,
\end{cases}
\qquad (t,x)\in[0,T)\times\mathbb{R}^{2}.
\]
We assume \(U\in C^{1,2}([0,T]\times\mathbb{R}^2)\) and \(V\in C^{1,3}([0,T]\times\mathbb{R}^2)\). 
Then \(w=\nabla V\in C^{1,2}([0,T]\times\mathbb{R}^2;\mathbb{R}^2)\), and differentiating the equation for \(V\) with respect to \(x_i\) gives
\(w_t^i+\Delta w^i-w^i+U_{x_i}=0\) for \(i=1,2\).
Hence \(\mathbf U=(U,w^1,w^2)^\top\) satisfies
\begin{lastnumbercases}\label{eq:ks_recast}
    U_t+\Delta U-\chi
    \bigl[
    U_{x_1}w^1+U_{x_2}w^2
    +U(w^1_{x_1}+w^2_{x_2})
    \bigr]=0,\nonumber\\[3pt]
    w^1_t+\Delta w^1-w^1+U_{x_1}=0,\nonumber\\
    w^2_t+\Delta w^2-w^2+U_{x_2}=0.
\end{lastnumbercases}
The terminal condition is
\[
\mathbf U(T,x)=g(x)
:=
    \begin{pmatrix}
        u(0,x)\\
        v_{x_1}(0,x)\\
        v_{x_2}(0,x)
    \end{pmatrix}
=
\begin{pmatrix}
500e^{-50|x|^2}\\
-1200x_1e^{-60|x|^2}\\
-1200x_2e^{-60|x|^2}
\end{pmatrix}.
\]

The terminal-value problem~\eqref{eq:ks_recast} admits a decoupled FBSDE formulation.
Let \(y=(y_1,y_2,y_3)^\top\in\mathbb{R}^3\) and
\(z=(z_{ij})\in\mathbb{R}^{3\times2}\).
The corresponding coefficients are
\(b=0\), \(\sigma=\sqrt{2}I_2\), and
\[
f(t,x,y,z)=
\begin{pmatrix}
-\dfrac{\chi}{\sqrt{2}}
\bigl[(z_{11}y_2+z_{12}y_3)+y_1(z_{21}+z_{32})\bigr]\\[6pt]
-y_2+\dfrac{z_{11}}{\sqrt{2}}\\[6pt]
-y_3+\dfrac{z_{12}}{\sqrt{2}}
\end{pmatrix}.
\]
Indeed, by the nonlinear Feynman--Kac formula~\eqref{eq:nonlinear_feynman_kac},
\(z_{ij}=\sqrt{2}(\mathbf U_i)_{x_j}\),
which gives the above representation.
The associated decoupled FBSDE is
\begin{equation*}
    \begin{cases}
        X_t=x+\displaystyle\int_0^t\sqrt{2}I_2\,\mathrm{d}W_s,\\[6pt]
        Y_t=g(X_T)+\displaystyle\int_t^Tf(s,X_s,Y_s,Z_s)\,\mathrm{d}s
        -\displaystyle\int_t^T Z_s\,\mathrm{d}W_s.
    \end{cases}
\end{equation*}

Finite-time chemotactic collapse is known for radially symmetric solutions of the two-dimensional Keller--Segel system~\cite{MR1627338}.
We use this setting as a short-time test for strongly aggregating dynamics and examine whether the proposed method captures the increasing concentration near the origin.
For the numerical experiment, we take \(\chi=1\) and evaluate the solution at \(T=5.0\times10^{-5},1.0\times10^{-4},3.0\times10^{-4},5.0\times10^{-4}\).
The numerical solutions and errors are evaluated on a uniform \(101\times101\) grid over \([-0.5,0.5]^2\), so that the origin is included as a grid point.
Reference solutions are computed by a fourth-order Runge--Kutta finite-difference scheme on an enlarged domain.
We use \(N=10\) and hidden-layer width \(60\).
The trainable initial value \(Y_0=(U_0,w_0^1,w_0^2)^\top\) is initialized using \(g(x_0)\), with \(U_0=\log(1+\exp(\xi_0))\) parametrized by an unconstrained variable \(\xi_0\) to preserve positivity.
The subnetworks are pretrained to approximate the terminal datum \(g\) using \(2000\) Adam iterations with learning rate \(10^{-2}\).
The learning rate of \(Y_0\) is \(0.5\) for the first \(5000\) iterations and \(0.2\) for the remaining \(5000\) iterations, while the network parameters use learning rates \(2\times10^{-2}\) and \(10^{-2}\), respectively.
Training stops when the change of \(Y_0\) remains below \(5\times10^{-3}\) for \(100\) consecutive Adam iterations.

\begin{figure}[tbp]
    \centering
    \includegraphics[width=\linewidth]{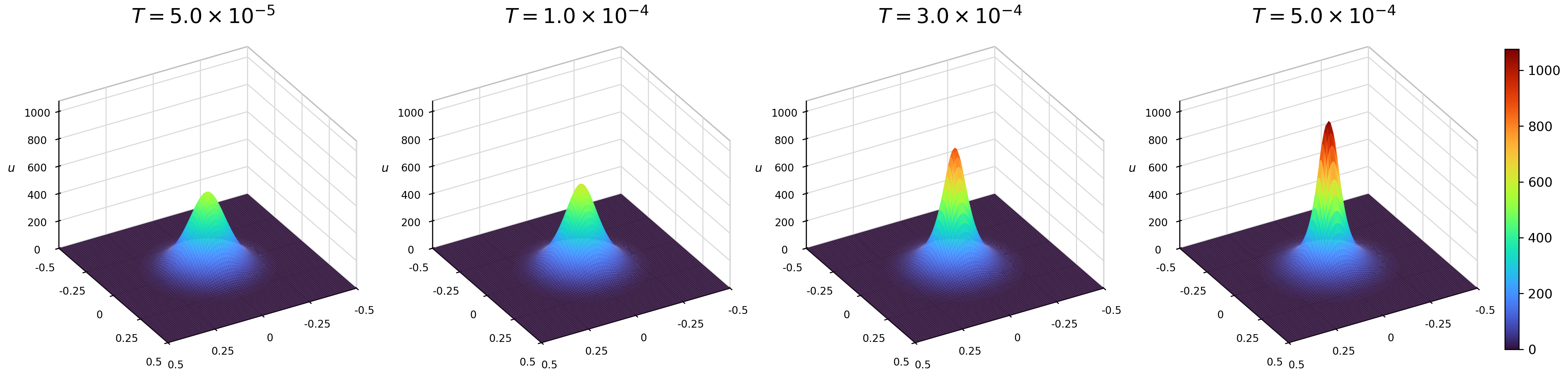}
    \caption{Cell density obtained by the deep truncated FBSDE method for the 2D Keller--Segel system~\eqref{eq:keller_segel} at \(T=5.0\times10^{-5}\), \(1.0\times10^{-4}\), \(3.0\times10^{-4}\), and \(5.0\times10^{-4}\).}
    \label{fig:keller_segel_density}
\end{figure}

\begin{table}[tbp]
    \centering
    \caption{Relative \(L^2\) and \(L^\infty\) errors of the deep truncated FBSDE method for the 2D Keller--Segel system~\eqref{eq:keller_segel} at different terminal times.}
    \label{Tab:Comparisons_Keller_Segel_2D}
    \footnotesize
    \setlength{\tabcolsep}{3pt}       \renewcommand{\arraystretch}{1} 
    \resizebox{0.8\linewidth}{!}{\begin{tabular}{lcccc}
            \toprule
            \textbf{Error}
            & \(T=5.0\times10^{-5}\)
            & \(T=1.0\times10^{-4}\)
            & \(T=3.0\times10^{-4}\)
            & \(T=5.0\times10^{-4}\) \\
            \midrule
            Rel. \(L^2\) error
            & 6.00E-04 & 1.97E-03 & 1.37E-02 & 1.83E-02 \\
            Rel. \(L^\infty\) error
            & 1.30E-03 & 3.71E-03 & 1.89E-02 & 4.38E-02 \\
            \bottomrule
        \end{tabular}}
\end{table}

Figure~\ref{fig:keller_segel_density} shows the cell density computed by the deep truncated FBSDE method.
The method captures the increasing concentration near the origin while preserving the localized structure of the solution.
The quantitative errors are reported in Table~\ref{Tab:Comparisons_Keller_Segel_2D}.
Both relative \(L^2\) and \(L^\infty\) errors remain small for all tested terminal times, indicating accurate approximation of the concentrated aggregation dynamics in this Keller--Segel example.

\section{Conclusions and Future Work}\label{sec:conclusions}

In this paper, we introduced the deep truncated FBSDE method for high-dimensional quasilinear parabolic PDEs and fully coupled FBSDEs.
The method combines gradient-truncated iterative decoupling with fictitious-play averaging, nonlinear Feynman--Kac reconstruction along the reference path, and pathwise consistent optimization.
The fully coupled framework also provides flexibility in selecting suitable stochastic representations for different classes of PDEs.

We characterized the resulting gradient structure, derived a residual-based error estimate, and, under suitable assumptions including weak coupling and residual consistency, established conditional convergence.
Numerical experiments demonstrate improved accuracy and stability in both low- and high-dimensional settings, together with competitive computational cost and robust performance for strongly coupled and degenerate convection-dominated problems beyond the scope of the convergence theory.

Future work will extend the convergence analysis to stronger coupling regimes, quantify neural approximation, finite-sample Monte Carlo, and stochastic optimization errors, and consider broader classes of nonlinear PDEs and coupled stochastic systems.

\section*{Acknowledgments}

The authors would like to express their sincere gratitude to Professor Enrique Zuazua for his generous support, valuable guidance, and insightful discussions. His suggestions and encouragement have greatly contributed to the improvement of this work.

X. Cheng is supported in part by the National Natural Science Foundation of China (Grant Nos.~12401270 and 42450192), the Natural Science Foundation of Shanghai (Grant No.~24ZR1404200), and the Shanghai Magnolia Talent Plan Pujiang Project (Grant No.~24PJA007).
Y. Li is supported in part by the National Natural Science Foundation of China (Grant No.~12301566), the Science and Technology Commission of Shanghai Municipality (Grant No.~23JC1400300), and the Pujiang Program of Shanghai Magnolia Talent Plan (Grant No.~24PJD002).

\bibliographystyle{abbrv}      
\begingroup
\makeatletter
\renewcommand{\url}[1]{\unskip\@ifnextchar.{\@gobble}{}}
\makeatother
\bibliography{references}          \endgroup

\end{document}